\newcommand{\retrait}{\hspace{1.7cm}}
\newcommand{\unp}{\mathbf{\mathrm{1 \kern-0.25em I}}}
\newcommand{\un}{\mathbf{1}}
\newcommand{\X}{\mathbf X}
\newcommand{\Hb}{\mathbf H}
\newcommand{\R}{\mathbb R}
\newcommand{\N}{\mathbb N}
\newcommand{\Q}{\mathbb Q}
\newcommand{\G}{\mathbf G}
\newcommand{\Z}{\mathbb Z}
\newcommand{\T}{\mathbb T}
\newcommand{\di}{{\rm d}}
\newcommand{\cal}{\mathcal}
\newenvironment{miniabstract}{%
\begin{center}\begin{minipage}{0.8\linewidth} %
}{%
\end{minipage} \end{center}\medskip %
}
\newcommand{\esp}{\mathbb E}
\newcommand{\prob}{\mathbb P}
\newcommand{\eupp}{\mathcal E_u^p}
\newcommand{\lib}{[} 
\newcommand{\rib}{]} 
\newcommand{\eup}{\mathcal E_u^1}
\newcommand{\supp}{\mathrm{supp}\,}
\newtheorem{theorem}{Theorem}[section]
\newtheorem*{theorem*}{Theorem}
\newtheorem*{lemma*}{Lemma}
\newtheorem*{corollary*}{Corollary}
\newtheorem{lemma}[theorem]{Lemma}
\newtheorem{proposition}[theorem]{Proposition}
\newtheorem*{proposition*}{Proposition}
\newtheorem{corollary}[theorem]{Corollary}
\theoremstyle{definition}
\newtheorem{definition}[theorem]{Definition}
\newtheorem*{definition*}{Definition}
\newtheorem{example}[theorem]{Example}
\newtheorem*{notations*}{Notations}
\theoremstyle{remark}
\newtheorem{remark}[theorem]{Remark}
\numberwithin{equation}{section}
\begin{document}
\pagestyle{plain}
\thispagestyle{plain}

\title{Central limit theorem and law of the iterated logarithm for the linear random walk on the torus}


\author{Jean-Baptiste Boyer}
\email{jeaboyer@math.cnrs.fr}


\keywords{}

\date{\today}


\begin{abstract}
Let $\rho$ be a probability measure on $\mathrm{SL}_d(\mathbb{Z})$ and consider the random walk defined by $\rho$ on the torus $\mathbb{T}^d=  \mathbb{R}^d/\mathbb{Z}^d$.

Bourgain, Furmann, Lindenstrauss and Mozes proved that under an assumption on the group generated by the support of $\rho$, the random walk starting at any irrational point equidistributes in the torus. 

In this article, we study the central limit theorem and the law of the iterated logarithm for this walk starting at some point having good diophantine properties.
\end{abstract}

\maketitle

\tableofcontents

\section{Introduction}

Let $\Gamma$ be a subgroup of $\mathrm{SL}_d(\Z)$ and $\rho$ a probablity measure on $\Gamma$. The action of $\Gamma$ on the torus $\X:=\T^d = \R^d/\Z^d$ allows one to define a random walk, setting, for any $x\in \X$,
\[
\left\{\begin{array}{rl}
X_0 &=x \\ X_{n+1} &= g_{n+1} X_n
\end{array} \right.
\]
where $(g_n)\in\Gamma^\N$ is chosen with the law $\rho^{\otimes \N}$. We note $\prob_x$ the measure on $\X^\N$ associated to the random walk starting at $x$.

The Markov operator associated to the walk is the one defined for any non-negative borelian function $f$ on $\X$ and any $x\in \X$ by
\[
Pf(x)  = \int_\G f(gx) \di\rho(g)
\]
We note $\nu$ the Lebesgue measure on $\X$. As $\nu$ is $\Gamma-$invariant, it is also $P-$invariant : for any continuous function $f$ on $\X$,
\[
\int_\X Pf \di \nu = \int_\X f\di\nu
\]
One can prove that for any $p\in[1,+\infty]$, $P$ is a continuous operator on $\mathrm{L}^p(\T^d,\nu)$ and $\|P\|_{\mathrm{L}^p} = 1$.

\medskip
In the sequel, we will need an hypothesis telling that the support of $\rho$ is big.

Let $\Hb$ be a closed subgroup of $\mathrm{SL}_d(\R)$.
We say that the action of $\Hb$ on $\R^d$ is \emph{strongly irreducible} if $\Hb$ doesn't fix any finite union of proper subspaces of $\R^d$ and we say that the action is \emph{proximal} if there is some $h\in\Hb$ for which there are an $h-$invariant line $V_h^+$ in $\R^d$ and an $h-$invariant hyperplane $V_h^<$ such that $\R^d= V_h^+ \oplus V_h^<$ and the restriction of $h$ to $V_h^<$ has a spectral radius strictly smaller than the restriction of $h$ to $V_h^+$.

We say that a borelian probability measure $\rho$ on $\G=\mathrm{SL}_d(\R)$ has an exponential moment if for some $\varepsilon \in \R_+^\ast$ we have
\[
\int_\G \|g\|^\varepsilon \di\rho(g) <+\infty
\]
Under these assumptions (exponential moment and strongly irreducible and proximal action of the closed subgroup generated by the support of $\rho$), we know that $P$ has a spectral radius strictly smaller than $1$ in the orthogonal of the constant functions in $\mathrm{L}^2(\X,\nu)$ (cf~Furmann and Shalom in~\cite{FuSh99} and also Guivarc'h in~\cite{Gu06}). We will say in that case that $P$ has a  \emph{spectral gap} in $\mathrm{L}^2(\X,\nu)$.

In particular under these assumptions, for any function $f\in \mathrm{L}^2(\X,\nu)$, there is a function $g\in\mathrm{L}^2(\X,\nu)$ such that $f=g-Pg + \int f\di\nu$ and the law of large numbers and the central limit theorem are already known for $\nu-$a.e. starting point $x\in \T^d$ (see for instance~\cite{GL94}, \cite{BIS95} and~\cite{DL03}) the variance in the central limit theorem beeing 
\begin{equation} \label{equation:variance}
\sigma^2(f) = \int g^2 - (Pg)^2 \di\nu
\end{equation}

\medskip
In this article, we are interested in the study of the walk starting at an arbitrary point $x\in \T^d$.

It is easy to see that the rational points in $\T^d$ have a finite $\Gamma-$orbit since any $g\in \Gamma$ increases the denominator of such a point. So to study the walk starting at a rational point one can use the classical results for Markov chains with a finie number of states.

We define a measurable application $\nu : \X \to \cal M^1(\X)$ (the set of probability measures on $\X$) by $\nu_x =\nu$ (Lebesgue measure on $\X$) if $x\not\in \Q^d/\Z^d$ and $\nu_x$ is the equidistributed measure on $\Gamma_\rho x$ if $x\in \Q^d/\Z^d$ where $\Gamma_\rho$ is the subgroup of $\mathrm{SL}_d(\Z)$ generated by the support of $\rho$.

\medskip
Bourgain, Furmann, Lindenstrauss and Mozes proved the following
\begin{theorem}[\cite{BFLM11}]
Let $\rho$ be a probability measure on $\Gamma= \mathrm{SL}_d(\Z)$ having an exponential moment and whose support generates a strongly irreducible and proximal subgroup.

Then, for any $x\in \X$, any continuous function $f$ on $\X$ and $\rho^{\otimes \N}-$a.e. $(g_n)\in \Gamma^\N$,
\[
\frac 1 n \sum_{k=0}^{n-1}f(g_k \dots g_1 x) \xrightarrow\, \int f\di\nu_x
\]
\end{theorem}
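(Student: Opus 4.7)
The plan is to split by whether $x$ is rational. When $x\in\Q^d/\Z^d$, every element of $\mathrm{SL}_d(\Z)$ preserves the finite set of rationals sharing the denominator of $x$, so $\Gamma_\rho x$ is finite. On this finite set, $P$ is an irreducible finite-state Markov operator whose unique invariant probability is precisely $\nu_x$, and the conclusion is the classical pointwise ergodic theorem for irreducible finite Markov chains.

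For irrational $x$, so that $\nu_x=\nu$ is Lebesgue, the core input is the BFLM quantitative Fourier estimate: there exists $\lambda=\lambda(\rho)<1$ such that, for every $\varepsilon>0$, every nonzero $k\in\Z^d$ with $\|k\|\le\varepsilon^{-1}$ and every sufficiently large $n$, one has $|P^ne_k(x)|\le\varepsilon$ unless $x$ lies within distance $\lambda^n$ of some rational $p/q$ with denominator $q\le Q(\varepsilon)$. This is obtained by expanding
\[
P^n e_k(x)=\esp\, e^{2\pi i \langle g_1^\top\cdots g_n^\top k,\,x\rangle}
\]
and analysing the dual random walk $k\mapsto g^\top k$ on $\Z^d$ via the large-deviation and H\"older regularity estimates of Le Page and Guivarc'h for products of random matrices, together with a multi-scale induction that transfers Fourier concentration at $x$ into a Diophantine approximation of $x$. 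Approximating a general continuous $f$ by a trigonometric polynomial then gives $P^n f(x)\to \int f\,\di\nu$ for every irrational $x$.

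To pass from this expected-value equidistribution to the almost sure statement along trajectories one exploits that the set of irrationals is $\Gamma$-invariant (so $X_n$ stays irrational $\prob_x$-a.s.) and uses the $\mathrm{L}^2$ spectral gap to solve the Poisson equation $f-\int f\,\di\nu = g-Pg$; the quantitative decay above ensures that $g=\sum_{n\ge 0}P^n(f-\int f\,\di\nu)$ is in fact bounded along $\prob_x$-a.e.\ trajectory. Then
\[
\sum_{k=0}^{n-1}\Bigl(f(X_k)-\int f\,\di\nu\Bigr)=g(X_0)-g(X_n)+M_n,\qquad M_n=\sum_{k=0}^{n-1}\bigl(g(X_{k+1})-Pg(X_k)\bigr),
\]
where $(M_n)$ is a martingale with bounded increments. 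The martingale strong law gives $M_n/n\to 0$ $\prob_x$-a.s., and the telescoping term is $O(1/n)$, yielding the claimed convergence.

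The main obstacle is the quantitative Fourier estimate itself: controlling the dual random walk uniformly across scales and ruling out persistent concentration of its Fourier mass near low-denominator rationals requires the delicate inductive scheme that occupies the bulk of the BFLM paper. Once this estimate is granted, the remaining steps (approximation by trigonometric polynomials, Poisson equation, and the martingale law of large numbers) are standard.
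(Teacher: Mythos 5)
This theorem is cited from \cite{BFLM11}; the present paper does not reprove it, so there is no internal proof to compare your sketch against, but the sketch itself has a gap. Your rational case is fine: $\Gamma_\rho x$ is finite, the chain on it is irreducible because $\Gamma_\rho$ acts transitively, it is doubly stochastic because each $g$ acts bijectively, so the uniform measure $\nu_x$ is the unique stationary law and the finite-state ergodic theorem applies.

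For irrational $x$, however, the argument breaks down at the assertion that $g=\sum_{n\ge0}P^n\bigl(f-\int f\,\di\nu\bigr)$ is ``bounded along $\prob_x$-a.e.\ trajectory.'' The BFLM Fourier estimate gives a rate of decay of $P^nf(x)-\int f\,\di\nu$ that is governed by the Diophantine type of $x$; for a Liouville $x$ this rate can be arbitrarily slow and the series defining $g(x)$ need not converge at all, so the decomposition $\sum_{k<n}\bigl(f(X_k)-\int f\,\di\nu\bigr)=g(X_0)-g(X_n)+M_n$ is not even available. Even when $g(x)<\infty$ (a Diophantine hypothesis, not satisfied by all irrational $x$), the walk visits arbitrarily small neighbourhoods of rational points where $g$ blows up, so $g$ is never bounded along the trajectory, and the telescoping term is not $O(1/n)$ in any naive sense; what one needs, and what is true only under a Diophantine condition, is $g(X_n)/n\to 0$ $\prob_x$-a.s., and obtaining this is exactly the purpose of the drift-function apparatus of Sections~\ref{section:induction} and~\ref{section:tore_point_diophantien} (cf.\ Lemma~\ref{lemma:u(X_n)/n}). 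This is also why the paper's own Theorem~\ref{theorem:TCL_tore_diophantien} is restricted to Diophantine starting points, whereas the law of large numbers from \cite{BFLM11} holds for every $x\in\T^d$ and is established there by a more delicate argument that does not reduce to producing a global Poisson solution. Your sketch, carried out carefully, would recover the Diophantine case but not the full statement.
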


This theorem is the law of large numbers for the sequence $(f(g_n \dots g_1 x))_{n\in \N}$ and we would like to study the central limit theorem and the law of the iterated logarithm. First, we will look at conditions on the function $f$ for which the variance given by equation~\ref{equation:variance} vanishes. This was studied in~\cite{FuSh99} when the measure $\rho$ is aperiodic : it's support is not contained  in a class modulo a proper subgroup of $\G$.

Let $\G$ be a locally compact group acting continuously on a topological space $\X$ preserving the probability measure $\nu$.

We say that the action of $\G$ on $\X$ is $\nu-$ergodic if every measurable $\G-$invariant function is constant $\nu-$a.e.

We will prove next
\begin{proposition*}[\ref{proposition:nullite_variance}]
Let $\G$ be a locally compact group acting continuously and ergodically on a topological space $\X$ endowed with a $\G-$invariant probability measure $\nu$.

Then, for any $g\in \mathrm{L}^2(\X,\nu)$, the following assertions are equivalent
\begin{enumerate}
\item $\|Pg\|_2 = \|g\|_2$
\item There is some subgroup $\Hb$ of $\G$ and some $\gamma\in \G$ such that $g$ is $\Hb-$invariant and $\supp \rho \subset \Hb \gamma$.
\end{enumerate}
\end{proposition*}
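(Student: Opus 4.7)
The plan is to prove the two implications separately: $(2) \Rightarrow (1)$ is essentially a direct computation, while $(1) \Rightarrow (2)$ comes from analyzing the equality case in Jensen's inequality. For $(2) \Rightarrow (1)$, assume $g$ is $\Hb$-invariant and $\supp \rho \subset \Hb\gamma$. Then for $\rho$-a.e. $h$ we may write $h = h'\gamma$ with $h' \in \Hb$, so $g(hx) = g(\gamma x)$ for $\nu$-a.e. $x$, and hence $Pg = g \circ \gamma$. The $\gamma$-invariance of $\nu$ then gives $\|Pg\|_2 = \|g \circ \gamma\|_2 = \|g\|_2$.

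For the converse direction, Jensen's inequality gives
\[
|Pg(x)|^2 \le \int_\G |g(hx)|^2\, \di \rho(h),
\]
and integrating against $\nu$, combined with Fubini and the $\G$-invariance of $\nu$, produces the well-known bound $\|Pg\|_2 \le \|g\|_2$. The assumption of equality therefore forces equality in Jensen for $\nu$-a.e. $x$, which means that the map $h \mapsto g(hx)$ is $\rho$-essentially constant (equal to $Pg(x)$). Equivalently, for $\nu$-a.e. $x$ and $(\rho \otimes \rho)$-a.e. $(h_1, h_2)$ we have $g(h_1 x) = g(h_2 x)$. Switching the order of quantifiers via Fubini and performing the change of variable $y = h_2 x$ (legitimate because each $h_2$ preserves $\nu$), this rewrites as: for $(\rho \otimes \rho)$-a.e. $(h_1, h_2)$, the identity $g(h_1 h_2^{-1} y) = g(y)$ holds for $\nu$-a.e. $y$.

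Now define $\Hb := \{h \in \G : g \circ h = g \text{ in } L^2(\X,\nu)\}$. This is the stabilizer of $g$ under the unitary representation $h \mapsto (f \mapsto f \circ h)$ on $L^2(\X,\nu)$, which is strongly continuous since the action is continuous and measure preserving, so $\Hb$ is a closed subgroup of $\G$. The previous step reads $h_1 h_2^{-1} \in \Hb$ for $(\rho \otimes \rho)$-a.e. $(h_1, h_2)$, and a further application of Fubini yields a $\rho$-conull set $E \subset \G$ such that $\rho(\Hb h_2) = 1$ for every $h_2 \in E$. Picking any $h_2 \in E \cap \supp \rho$ (which is nonempty since $E$ is $\rho$-conull) and using that $\Hb h_2$ is closed, we conclude $\supp \rho \subset \Hb h_2$, and setting $\gamma := h_2$ finishes the argument. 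The only delicate technical point is the strong continuity of the quasi-regular representation on $L^2(\X,\nu)$, which guarantees both that $\Hb$ is closed and that the relevant sets are Borel measurable in the Fubini arguments; this is standard for continuous measure-preserving actions of locally compact groups.
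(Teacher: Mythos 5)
Your proof is correct, and the overall architecture matches the paper's: expand $\|g\|_2^2 - \|Pg\|_2^2$ as an integral expressing a mean-square difference of $g$ along the walk, deduce that $g(h_1 x) = g(h_2 x)$ for $(\rho\otimes\rho\otimes\nu)$-a.e.\ $(h_1,h_2,x)$, identify $\Hb$ as the stabilizer of $g$ in $\mathrm L^2(\X,\nu)$, and locate a coset of $\Hb$ containing $\supp\rho$. The differences are in presentation and are worth noting. The paper derives the intermediate statement ``for $\nu$-a.e.\ $x$ and $\rho\ast\tilde\rho$-a.e.\ $g$, $f(gx)=f(x)$'' via the algebraic identity
\[
\int_\X\int_\G |f(gx)-f(x)|^2\,\di(\rho\ast\tilde\rho)(g)\,\di\nu(x) = 2\|f\|_2^2 - 2\|Pf\|_2^2,
\]
while you reach the equivalent conclusion through the equality case of Jensen's inequality for the strictly convex function $t\mapsto t^2$; unwound, these are the same computation. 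To go from that statement to the coset condition, the paper invokes a preliminary lemma characterizing the group generated by $(\supp\rho)(\supp\rho)^{-1}$ as the smallest subgroup $\Hb$ with $\supp\rho\subset\Hb\gamma$ for some $\gamma$, whereas you argue directly via Fubini, producing a $\rho$-conull set $E$ with $\rho(\Hb h_2)=1$ for $h_2\in E$ and then picking $h_2\in E\cap\supp\rho$. You are also more explicit than the paper about the point that $\Hb$ must be closed (via strong continuity of the quasi-regular representation) to pass from ``$\rho(\Hb h_2)=1$'' to ``$\supp\rho\subset\Hb h_2$''; the paper glosses over this, so in that respect your write-up is somewhat more careful. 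Neither argument uses the ergodicity hypothesis in proving the equivalence itself; it is only needed in the remark that follows the proposition.
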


\begin{remark}
In particular, there is a non-constant function $g\in \mathrm{L}^2(\X,\nu)$ such that $\|Pg\|_2 = \|g\|_2$ if and only if there is a subgroup $\Hb$ of $\G$ whose action on $\X$ is not ergodic and some element $\gamma \in \G$ such that $\supp\rho \subset \Hb \gamma$.
\end{remark}

From now on, we fix a norm $\|\,.\,\|$ on $\R^d$ which defines a distance on the torus, setting, for any $x,y\in \X$,
\[
d(x,y) = \inf_{p\in \Z^d} \| \overline{x}-\overline{y}-p\|
\]
Where $\overline{x}$ (resp. $\overline{y}$) is a representative of $x$ (resp. $y$) in $\R^d$.

We note $C^{0,\gamma}(\X)$ the space of $\gamma-$hölder continuous functions on $\X$ that we endow with the norm : for any $f\in \cal C^{0,\gamma}(\X)$,
\begin{equation} \label{equation:norme_gamma}
\|f\|_\gamma = \sup_{x\in \X} |f(x)| + \sup_{\substack{x,y\in \X\\ x\not=y}}\frac{|f(x)-f(y)|}{d(x,y)^\gamma}
\end{equation}

\medskip
To prove the central limit theorem and the law of the iterated logarithm, we will see in section~\ref{section:tore_point_diophantien} that the result of Bourgain, Furmann, Lindenstrauss and Mozes in~\cite{BFLM11} allows one to have a speed of convergence depending on the diophantine properties of $x$ (when $f$ is hölder continuous). This will give us the

\begin{theorem*}[\ref{theorem:TCL_tore_diophantien}]
Let $\rho$ be a probability measure on $\Gamma= \mathrm{SL}_d(\Z)$ having an exponential moment and whose support generates a strongly irreducible and proximal subgroup.

Then, for any $\gamma \in ]0,1]$ there is $\beta_0 \in \R_+^\ast$ such that for any $B \in \R_+^\ast$ and $\beta \in ]0,\beta_0[$ we have that for any irrational point $x\in \X$ such that the inequality
\[
d\left(x,\frac p q \right) \leqslant e^{-Bq^\beta}
\]
has a finite number of solutions $p/q \in \Q^d/\Z^d $, we have that for any $\gamma-$holder continuous function $f$ on the torus, noting $\sigma^2(f) $ the variance given by equation~\ref{equation:variance} we have that
\[
\frac 1 {\sqrt n} \sum_{k=0}^{n-1} f(X_k) \xrightarrow{\mathcal{L}} \cal N\left(\int f\di \nu, \sigma^2(f) \right)
\]
(If $\sigma^2=0$, the law $\cal N(\mu,\sigma^2)$ is a Dirac mass at $\mu$).

Moreover, if $\sigma^2(f) \not=0$ then, $\prob_x-$a.e.,
\[
\liminf \frac{ \sum_{k=0}^{n-1} f(X_k) - \int f \di \nu}{\sqrt{2n\sigma^2(f) \ln\ln n}} =-1 \text{ and }\limsup \frac{ \sum_{k=0}^{n-1} f(X_k) - \int f \di \nu}{\sqrt{2n\sigma^2(f) \ln\ln n}} =1 
\]
and if $\sigma^2(f)=0$, then for $\nu-$a.e. $x\in \X$, the sequence $ (\sum_{k=0}^{n-1} f(X_k) - \int f \di \nu)_n$ is bounded in $\mathrm{L}^2(\prob_x)$.
\end{theorem*}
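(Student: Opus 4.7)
The plan is to reduce to a martingale central limit theorem and law of the iterated logarithm via Gordin's method, using the quantitative equidistribution developed in Section~\ref{section:tore_point_diophantien} to make the Poisson equation pointwise at the Diophantine point $x$.

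Set $\bar f := f - \int f\di\nu$. The $\mathrm L^2$-spectral gap produces $g := \sum_{n\geqslant 0} P^n \bar f \in \mathrm L^2(\X,\nu)$ solving $g - Pg = \bar f$, and a short computation gives $\sigma^2(f) = \int(g^2 - (Pg)^2)\di\nu$. The refinement of~\cite{BFLM11} obtained in Section~\ref{section:tore_point_diophantien} provides, for $\gamma$-hölder functions and Diophantine points $x$ of the type prescribed by the hypothesis on $\beta$, an exponential rate $|P^n \bar f(x)| \leqslant C(x) e^{-\lambda n}\|f\|_\gamma$. Summing the series pointwise, $g$ is well-defined and bounded at $x$ (and along the orbit almost surely), the identity $g-Pg = \bar f$ holds at $x$, and interpolating with the trivial $\mathrm L^\infty$-bound on $P^n\bar f$ yields that $g$ is itself $\gamma'$-hölder continuous for some $\gamma' > 0$. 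This is the main obstacle: the $\mathrm L^2$-spectral gap alone only gives $g$ up to a $\nu$-null set and cannot be evaluated pointwise, so the Diophantine hypothesis is precisely what allows one to carry out the Gordin decomposition at the prescribed starting point $x$.

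With $g$ in hand, one writes the telescoping identity
\[
\sum_{k=0}^{n-1} \bar f(X_k) = M_n + g(X_0) - g(X_n),\qquad M_n := \sum_{k=1}^{n} \bigl(g(X_k) - Pg(X_{k-1})\bigr),
\]
where $(M_n)$ is a $\prob_x$-martingale with bounded increments whose conditional variances equal $\varphi(X_{k-1})$, with $\varphi := Pg^2 - (Pg)^2$ bounded, continuous, and $\int\varphi\di\nu = \sigma^2(f)$. Since $x$ is irrational, the law of large numbers of Bourgain--Furmann--Lindenstrauss--Mozes applied to $\varphi$ gives $\frac 1 n \sum_{k=0}^{n-1}\varphi(X_k) \to \sigma^2(f)$, $\prob_x$-almost surely. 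The boundedness of $g$ makes the Lindeberg condition trivial and the remainder $g(X_0)-g(X_n)$ negligible after the normalisations $\sqrt n$ and $\sqrt{2n\ln\ln n}$, so Brown's martingale CLT and the Heyde--Scott martingale LIL apply and yield the claimed limits when $\sigma^2(f) > 0$.

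Finally, the degenerate case $\sigma^2(f) = 0$ is handled as follows: since $\varphi\geqslant 0$ with $\int\varphi\di\nu = 0$, one has $\varphi = 0$ $\nu$-a.e., so for $\nu$-a.e.\ $x$ the bracket process satisfies $\esp_x[M_n^2] = \sum_{k<n}\esp_x[\varphi(X_k)] = 0$, hence $M_n \equiv 0$ $\prob_x$-a.s.\ and the partial sums equal $g(x) - g(X_n)$, which is bounded in $\mathrm L^2(\prob_x)$.
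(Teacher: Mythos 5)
Your plan is the right one (Gordin's method with the quantitative BFLM equidistribution), but the argument breaks at a crucial point: you assert that the Poisson solution $g=\sum_n P^n\bar f$ is bounded and even $\gamma'$-hölder continuous at the Diophantine points in question, and you deduce from this that the Lindeberg condition and the negligibility of the boundary term $g(X_0)-g(X_n)$ are trivial. This cannot be repaired, because the paper proves (and explains in the introduction) that no \emph{continuous} or bounded solution to Poisson's equation can exist: if $g$ were continuous on the torus then $f=g-Pg+\int f\di\nu$ would force $\int f\di\mu=\int f\di\nu$ for \emph{every} stationary measure $\mu$, in particular $f(0)=\int f\di\nu$, which is absurd for general hölder $f$. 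What the quantitative bound of Proposition~\ref{proposition:vitesse_convergence_tore} actually gives, for the Diophantine condition $\varphi(q)=e^{Bq^\beta}$ prescribed in the theorem, is only a \emph{polynomial} rate $\cal W_\gamma(\rho^{\ast n}\ast\delta_x,\nu)\lesssim n^{-(1+M)}u_\varphi(x)$ (see Corollary~\ref{corollaire:solution_poisson_tore_diophantien}), with a constant $u_\varphi(x)$ that blows up near rationals; the exponential rate you invoke requires the stronger polynomial Diophantine condition $\varphi(q)=q^D$. Consequently $g$ is finite at $x$ but is unbounded and not continuous, and along the orbit $(X_n)$ it takes unbounded values. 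The entire drift-function apparatus of Section~\ref{section:induction} (the space $\cal F^3_{u_\varphi}$, Lemmas~\ref{lemma:u(X_n)/n} and~\ref{lemma:lindeberg}, and Corollary~\ref{corollary:TCL_martingales}) exists precisely to control the remainder $g(X_n)/\sqrt n$ and the Lindeberg sums when $g$ is only dominated by such an $u_\varphi$; none of this is trivial.

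A second, related gap: you invoke the BFLM law of large numbers to get $\frac 1 n\sum_{k<n}\varphi(X_k)\to\sigma^2(f)$, where $\varphi=Pg^2-(Pg)^2$. But BFLM's theorem applies to \emph{continuous} test functions, and $\varphi$ is neither continuous nor bounded (since $g$ is not). Establishing the convergence of the conditional variances is the main technical content of Lemma~\ref{lemma:convergence_variance_tore_diophantien}, which requires the fourth-moment estimate of Lemma~\ref{lemme:preliminaire_variance}, the spectral gap in $\mathrm L^2(\X,\nu)$, Lemma~\ref{lemma:produit_cesaro}, and a delicate truncation $g=\sum_{l<p(k)}P^lf+\sum_{l\geqslant p(k)}P^lf$. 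Finally, for the degenerate case $\sigma^2(f)=0$, your argument that the partial sums are bounded in $\mathrm L^2(\prob_x)$ implicitly requires $\sup_n P^n(g^2)(x)<\infty$, which again is a consequence of the drift inequality $P^n u_\varphi\leqslant a^n u_\varphi+b/(1-a)$ rather than of boundedness of $g$; the paper deduces it from Corollary~\ref{corollary:nullite_variance}.
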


\begin{remark}
Our condition is satisfied in particular for diophantine points of the torus. Therefore, the set of points where our theorem applies has Lebesgue-measure $1$. But, the theorem also works for some Liouville numbers.
\end{remark}

Our strategy to prove this result is \emph{Gordin's method}. For a continuous function on the torus, we call \emph{Poisson's equation} the equation $f=g-Pg+\int f\di\nu$ where $g$ is some unknown function. If this equation has a continuous solution (we already know, with a spectral gap argument, that a solution exists in $\mathrm{L}^2(\X,\nu)$) then, we can write for any $x\in \X$ and $\rho^{\otimes \N}-$a.e. $(g_n) \in \Gamma^\N$,
\[
\sum_{k=0}^{n-1} f(X_k) =g(X_0) - g(X_n) + \sum_{k=0}^{n-1} g(X_{k+1}) - Pg(X_k)
\]
The key remark is that $M_n = \sum_{k=0}^{n-1} g(X_{k+1}) - Pg(X_k)$ is a martingale with bounded increments and we can use the classic results for the martingales to prove the central limit theorem and the law of the iterated logarithm.

\medskip
Here, in general, there cannot exist a continuous function $g$ on the torus such that $f=g-Pg+\int f\di\nu$ because this would imply that $f$ has the same integral against all the stationary measures (in particular, we would have that $f(0) = \int f\di\nu$). However, we will prove the theorem by showing that for any holder continuous function $f$ on the torus, we can solve Poisson's equation at points having good diophantine properties (cf. section~\ref{section:tore_point_diophantien}). Moreover, the solution we construct will not be bounded on $\X$ but will be dominated by a function $u:\X\to [1,+\infty]$ that we call \emph{drift function} and that satisfies
\[
Pu \leqslant au+b
\]
for some $a\in ]0,1[$ and $b\in \R$. This equation means that if $u(x)$ is large, then, in average, $u(gx)$ is much smaller than $u(x)$. Or in other words, the function $g$ that we construct is not bounded but the walk doesn't spend much time at points $x$ where $|g(x)|$ is large.

\bigskip
The first section of this article consists in a study of  drift functions and the proof of the central limit theorem and the law of the iterated logarithm for martingales with difference sequence bounded by drift functions.

In the second section, we study the variance appearing in the central limit theorem and the case where it vanishes.

Finally, in the third section, we solve Poisson's equation for points of the torus having good diophantine properties and we prove theorem~\ref{theorem:TCL_tore_diophantien}.

\section{Drift functions} \label{section:induction}
\thispagestyle{empty}
\begin{miniabstract}
In this section, we introduce and study some kind of functions that we call ``drift functions'' and that allow one to control the sequence $(f(X_n))$ when $f$ is dominated by one.

Moreover, we prove the law of large number, the central limit theorem and the law of the iterated logarithm for martingales dominated by drift functions.
\end{miniabstract}

\subsection{Definitions} \label{section:drift}

In this section, $(X_n)$ is a Markov chain on a standard borelian space $\X$.

\begin{definition}[Drift function]\label{def:drift}Let $u: \X\to [1,+\infty]$ be a borelian function and $C$ a borelian subset of $\X$.

We say that $(u,C)$ is a drift function if $u$ is bounded on $C$ and if there is some $b\in \R$ such that
\[
Pu \leqslant u + b\un_C
\]
In general, we will say that $u$ is a drift function without indicating the set $C$.
\end{definition}

\begin{remark}
These functions are studied by many authors and our main reference is~\cite{MT93} (see also~\cite{GM96}).

Meyn and Tweedie don't assume that $u$ is bounded on $C$ but that $C$ is a so called \emph{petite-set} and this allows them to prove that one can find a borelian set $C'$ such that $(u,C')$ is a drift function with our definition.
\end{remark}

\begin{remark}
Many authors call Lyapunov function any non negative measurable function $v: \X\to [1,+\infty[$ such that $Pv\leqslant v$. So, our drift functions are very close to Lyapunov functions.
\end{remark}

As we assume that $Pu \leqslant u+b\un_C$, we can study borelian functions $f$ on $\X$ such that
\begin{equation}
|f| \leqslant u-Pu+b\un_C \label{equation:derive_f}
\end{equation}
We are going to see that we have a good control on the sequence $(P^nf)$ (or, more specifically, on the series whose general terms involves the $P^nf$).

Therefore, we set, for $p\in \R_+$,
\[
\cal E_u^p:= \left\{   f:\X\to \R\middle| f\text{ is borelian and }\exists M \forall x\in \X, \;|f(x)| \leqslant M(u-Pu+b\un_C)^{1/p} \right\}
\] 
And, for any $f\in \cal E_u^p$, we set
\[
\|f\|_{\cal E_u^p} = \inf\left\{ M\in \R \middle| \forall x\in \X,\;| f(x)| \leqslant M(u-Pu+b\un_C)^{1/p} \right\}
\]
\begin{remark}
The space $(\cal E_u^p,\|\,.\,\|_{\cal E_u^p})$ is a Banach space.
\end{remark}

In the same way, we set, for any $p\in [1,+\infty[$,
\[
\cal F_u^p:= \left\{   f:\X\to \R\middle| f\text{ is borelian and }\exists M, \forall x\in \X, \;|f(x)| \leqslant Mu(x)^{1/p} \right\}
\] 
and, for $f\in \cal F_u^p$,
\[
\|f\|_{\cal F_u^p} = \sup_{x\in \X} \frac{ |f(x)|}{u(x)^{1/p}}
\]

In next lemma, we use the control given by the drift function to prove that the space $\eup$ is a subset of the space of integrable functions against the stationary measures for the Markov chain.

\begin{lemma}\label{lemma:extmesures}
Let $u$ be a drift function and $\nu$ a borelian probability measure on $\X$ that is $P-$stationnary and such that $\nu(u<+\infty)=1$.

Then, the identity operator defined from $\eupp(\X)$ to $ \mathcal L^p(\X,\nu)$ is continuous.
\end{lemma}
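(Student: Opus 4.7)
The plan is to reduce the statement to the bound $\int_\X (u - Pu + b\un_C)\,\di\nu \leqslant b$. Once this is in hand, the definition of $\eupp$ gives $|f|^p \leqslant \|f\|_{\eupp}^p (u - Pu + b\un_C)$ for every $f \in \eupp$, and integrating against $\nu$ yields $\|f\|_{\mathrm{L}^p(\nu)} \leqslant b^{1/p}\|f\|_{\eupp}$, which is exactly the continuity of the identity. Note one may assume $b\geqslant 0$: if the drift inequality holds with some $b_0<0$ it also holds with $b=0$, and replacing $b$ by $\max(b,0)$ only enlarges $\eupp$ and weakens its norm, so any bound proved with the larger value implies the desired one.

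The naive approach would be to invoke $P$-stationarity directly: $\int Pu \,\di\nu = \int u\,\di\nu$, hence $\int(u - Pu)\,\di\nu = 0$ and the integral of $u - Pu + b\un_C$ equals $b\nu(C)$. The obstruction is that $\int u\,\di\nu$ may be infinite, in which case the subtraction is ill-defined. To bypass this, I would truncate: set $u_M := u \wedge M$ for $M \in \N$ and first verify that $u_M$ is still a drift function for the same $C$ and $b$. Using $Pu_M \leqslant Pu$ and $Pu_M \leqslant PM = M$, one has $Pu_M \leqslant (Pu) \wedge M \leqslant (u + b\un_C) \wedge M$, and a short case analysis on whether $x \in C$ and whether $u(x)\leqslant M$ gives $(u + b\un_C) \wedge M \leqslant (u\wedge M) + b\un_C = u_M + b\un_C$.

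Since $u_M$ is bounded it lies in $\mathrm{L}^1(\nu)$, and stationarity yields $\int(u_M - Pu_M)\,\di\nu = 0$; thus $h_M := u_M - Pu_M + b\un_C$ is non-negative with $\int h_M\,\di\nu = b\nu(C)$. Here the assumption $\nu(u<+\infty)=1$ enters: on a set of full $\nu$-measure, $u_M = u$ for $M$ large enough, while $Pu_M \uparrow Pu$ by monotone convergence, so $h_M \to h := u - Pu + b\un_C$ pointwise $\nu$-almost everywhere. Applying Fatou's lemma to the non-negative sequence $(h_M)$ gives
$$\int_\X h\,\di\nu \;\leqslant\; \liminf_M \int_\X h_M\,\di\nu \;=\; b\nu(C) \;\leqslant\; b,$$
which completes the proof. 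The only genuinely delicate step is the truncation: checking that $u_M$ still satisfies the drift inequality with the same constants $C$ and $b$, since this is precisely what allows the direct stationarity computation on the bounded $u_M$ to transfer to the unbounded $u$ via Fatou.
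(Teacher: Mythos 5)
Your proof is correct and takes a genuinely different route from the paper. The paper's argument runs through Chacon--Ornstein's ergodic theorem: it telescopes $\frac{1}{n}\sum_{k<n}P^k(|f|^p)\leqslant \|f\|_{\eupp}^p(\frac{1}{n}u+b)$, invokes Chacon--Ornstein to produce a $P$-invariant $f^\ast$ with $\int|f|^p\,\di\nu=\int f^\ast\,\di\nu$ and $\frac{1}{n}\sum_{k<n}P^k|f|^p\to f^\ast$ a.e., and then uses $\nu(u<\infty)=1$ to conclude $f^\ast\leqslant b\|f\|_{\eupp}^p$ a.e. Your argument instead reduces everything to the single inequality $\int(u-Pu+b\un_C)\,\di\nu\leqslant b$ and proves it by hand: truncate $u$ to $u_M=u\wedge M$, check directly (via the case analysis you give, which is correct once $b\geqslant 0$) that $u_M$ inherits the drift inequality with the same $C$ and $b$, so that $h_M=u_M-Pu_M+b\un_C\geqslant 0$ and $\int h_M\,\di\nu=b\nu(C)$ by stationarity applied to the bounded $u_M$; then $h_M\to h$ $\nu$-a.e.\ (using $\nu(u<\infty)=1$ and monotone convergence for $Pu_M\uparrow Pu$), and Fatou delivers $\int h\,\di\nu\leqslant b$. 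The trade-off is clear: the paper's route is short modulo a nontrivial ergodic-theoretic black box, whereas yours is entirely elementary (monotone convergence and Fatou only) at the cost of the careful truncation lemma. Your approach is arguably cleaner and more self-contained; it also makes transparent exactly where each hypothesis ($P$-stationarity, $\nu(u<\infty)=1$, non-negativity of $h_M$) is used, and delivers the same explicit bound $\|f\|_{\mathrm{L}^p(\nu)}\leqslant b^{1/p}\|f\|_{\eupp}$.
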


\begin{proof}
{(cf. lemma~3.8 in~\cite{BQstat3})}~

\medskip
Let $f\in \eupp$ be a non negative function, $x\in \X$ and $n\in\N^\ast$, then, by definition of $\eupp$, $|f|^p(x)\leqslant \|f\|_{\eupp}^p (u-Pu+b)(x)$, and so,
\[\frac 1 n \sum_{k=0}^{n-1}P^k(|f|^p)(x)\leqslant \frac{\|f\|_{\eupp}^p} n(u-P^nu+nb)\leqslant\|f\|_{\eupp} ^p(\frac 1 n u(x)+b)\]
But, according to Chacon-Ornstein's ergodic theorem (see for instance the theorem 3.4 of the third chapter of~\cite{Kre85}), there is a $P-$invariant function $f^\ast$ on $\X$ that is non negative and such that $\int |f|^p\di\nu=\int f^\ast \di\nu$ and, for $\nu-$a.e. $x\in \X$,
\[\frac 1 n \sum_{k=0}^{n-1}P^k|f|^p(x) \xrightarrow\, f^\ast(x) \]
But, since $u$ is finite $\nu-$a.e., we get that $f^\ast(x)\leqslant b\|f\|_{\eupp}^p$ for $\nu-$a.e. $x\in \X$. And so, $f^\ast\in\mathrm L^\infty(\X,\nu)\subset \mathrm L^1(\X,\nu)$ since we assumed that $\nu$ is a probability measure. This proves that, $f\in\mathcal L^p(\X,\nu)$ and that $\|f\|_{\mathcal L^p(\X,\nu)}\leqslant b^{1/p}\|f\|_{\eupp}$.
\end{proof}

\subsection{The LLN, the CLT and the LIL for martingales} \label{section:LLN_TCL}

\begin{miniabstract}
In this section, we prove three of the classical results in probability theory for martingales with increments dominated by a drift function.

In particular we will prove that the central limit theorem and the law of the iterated logarithm for martingales can be deduced from a law of large numbers and this will be our corollary~\ref{corollary:TCL_martingales}.
\end{miniabstract}

\begin{remark}
In this section, we make an assumption such as  ``$f\in \cal E_u^p $ for some $p>1$'' very often. The reader shall not be afraid of this assumption because in many examples we can construct families of drift functions and if $f$ is dominated by one, $f^p$ will be dominated by an other one.
\end{remark}

Before we state and prove corollary~\ref{corollary:TCL_martingales}, we state some lemmas that we will also use in the study of the random walk on the torus.

\medskip
First, we extend the law of large numbers for martingales (stated in~\cite{Br60}) for measurable functions $f\in \cal E_u^p$ for some $p>1$: this will be our proposition~\ref{proposition:lln_martingales}. To prove it, we will use the following
\begin{lemma}\label{lemma:sommation_Abel}
Let $u$ be a drift function, $x\in \X$, and $\alpha\in\R_+$, then
\[ \sup_{n\in\N}\sum_{k=0}^{n} \frac {P^{k}(u-Pu)(x)}{(k+1)^{\alpha}} \leqslant u(x)\]
\end{lemma}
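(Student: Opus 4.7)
The plan is to use Abel summation (as the name of the lemma suggests), exploiting the telescoping nature of the sequence $a_k := P^k(u-Pu)(x)$. Its partial sums
\[
S_n := \sum_{k=0}^n a_k = \sum_{k=0}^n \left(P^k u(x) - P^{k+1} u(x)\right) = u(x) - P^{n+1} u(x)
\]
collapse by telescoping, and since $u$ takes values in $[1,+\infty]$, in particular $P^{n+1} u \geq 0$, one has $S_n \leq u(x)$ for every $n \in \N$.

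Next I would apply summation by parts with weights $w_k := (k+1)^{-\alpha}$, which gives the identity
\[
\sum_{k=0}^{n} \frac{a_k}{(k+1)^\alpha} = \sum_{k=0}^{n-1} S_k \left( \frac{1}{(k+1)^\alpha} - \frac{1}{(k+2)^\alpha} \right) + \frac{S_n}{(n+1)^\alpha}.
\]
Because $\alpha \in \R_+$, the weights are non-increasing, so every bracketed difference is $\geq 0$. Inserting the uniform bound $S_k \leq u(x)$ into each term and observing that the weight differences themselves telescope would then yield
\[
\sum_{k=0}^{n} \frac{a_k}{(k+1)^\alpha} \leq u(x) \left[ \left(1 - \frac{1}{(n+1)^\alpha}\right) + \frac{1}{(n+1)^\alpha} \right] = u(x),
\]
uniformly in $n$, which is exactly the claim (the case $\alpha = 0$ is the trivial endpoint, where the whole sum equals $S_n$).

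I do not expect any genuine obstacle here: the drift inequality $Pu \leq u + b\un_C$ is not even used in the estimate; only the non-negativity of $u$ enters. The content of the lemma is really a formal Abel-summation identity, and its interest lies not in the difficulty of the proof but in the uniform control it will provide later in the paper on weighted tail series built from Markov iterates of $u - Pu$.
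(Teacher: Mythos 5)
Your proof is correct and is essentially the same argument as the paper's: both are Abel summation, using only that the weights $(k+1)^{-\alpha}$ are non-increasing and that $P^k u \geqslant 0$, so that the telescoped partial sums $S_n = u(x) - P^{n+1}u(x)$ are bounded above by $u(x)$. The paper performs the re-indexing directly on the double sum rather than writing the partial-sum form explicitly, but the two computations are algebraically identical, and your observation that the drift inequality is not actually used is also accurate.
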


\begin{proof}
We can compute :
\begin{flalign*}
\sum_{k=0}^{n} \frac {P^{k}(u-Pu)}{(k+1)^{\alpha}} &= \sum_{k=0}^n \frac 1 {(k+1)^\alpha} P^ku -\sum_{k=0}^n \frac 1 {(k+1)^\alpha} P^{k+1} u \\
& =\sum_{k=1}^n (\frac 1 {(k+1)^\alpha}-\frac 1 {k^\alpha})P^ku + u-\frac 1 {(n+1)^\alpha} P^{n+1} u \\
& \leqslant u(x) \text{ since }u\text{ is non negative}
\end{flalign*}
\end{proof}

Then, we prove that it is the same thing to study
\[
\frac 1 n\sum_{k=0}^{n-1} f(X_n) - Pf(X_n)
\text{ and }
\frac 1 n \sum_{k=0}^{n-1} f(X_{n+1}) - Pf(X_n).
\]
Thus, having the law of large numbers, the central limit theorem and the law of the iterated logarithm for martingales, we will get these results for functions $f$ that writes $f=g-Pg$.

\begin{lemma}\label{lemma:u(X_n)/n}
Let $u$ be a drift function and $p>1$. Then, for any $f\in \eupp$, any $x\in \X$ such that $u(x)$ is finite and any $\varepsilon \in ]0,p[$,
\[
\frac {f(X_n)}{n^{1/(p-\varepsilon)}}  \xrightarrow\, 0 \; \prob_x-\text{a.e. and in }\mathrm{L}^1(\prob_x)
\]
\end{lemma}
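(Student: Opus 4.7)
The strategy is to derive both conclusions from the single estimate $|f|^p\le \|f\|_{\cal E_u^p}^p(u-Pu+b\un_C)$ that is built into the definition of $\cal E_u^p$. Applying $P^n$ at the starting point $x$ turns this into
$$\esp_x\bigl[|f(X_n)|^p\bigr]=P^n(|f|^p)(x)\le \|f\|_{\cal E_u^p}^p\,P^n(u-Pu+b\un_C)(x),$$
so everything reduces to controlling the right-hand side at the correct weight in $n$.

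I would set $\beta:=p/(p-\varepsilon)$, noting that $\beta>1$ precisely because $\varepsilon>0$. Then lemma~\ref{lemma:sommation_Abel} applied to $u$ with exponent $\alpha=\beta$, combined with the trivial bound $P^n\un_C\le 1$, yields
$$\sum_{n\ge 0}\frac{\esp_x[|f(X_n)|^p]}{(n+1)^{\beta}}\le \|f\|_{\cal E_u^p}^p\left(u(x)+b\sum_{n\ge 0}(n+1)^{-\beta}\right)<+\infty,$$
the finiteness coming from $u(x)<+\infty$ and $\beta>1$.

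From this single summability, both convergences fall out. For the almost-sure statement, Markov's inequality gives, for every $\delta>0$,
$$\sum_{n\ge 1}\prob_x\!\left(\frac{|f(X_n)|}{n^{1/(p-\varepsilon)}}>\delta\right)\le \frac{1}{\delta^p}\sum_{n\ge 1}\frac{\esp_x[|f(X_n)|^p]}{n^{\beta}}<+\infty,$$
so Borel--Cantelli forces $\limsup_n|f(X_n)|/n^{1/(p-\varepsilon)}\le\delta$ $\prob_x$-a.s., and letting $\delta$ run through $\{1/k\}_{k\ge 1}$ yields a.s.\ convergence to $0$. For the $L^1$ statement, convergence of a series of non-negative terms forces its general term to tend to $0$, i.e.\ $\esp_x[|f(X_n)|^p]/n^{\beta}\to 0$, and Jensen's inequality (applied to the convex function $y\mapsto y^p$, $p\ge 1$) gives
$$\esp_x\!\left[\frac{|f(X_n)|}{n^{1/(p-\varepsilon)}}\right]\le\left(\frac{\esp_x[|f(X_n)|^p]}{n^{\beta}}\right)^{1/p}\xrightarrow[n\to\infty]{}0.$$

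There is essentially no obstacle, only a point worth highlighting: the slack $\varepsilon>0$ is used in exactly one place, to make $\beta=p/(p-\varepsilon)$ strictly greater than $1$. That single inequality simultaneously makes the Borel--Cantelli series converge and makes $\esp_x[|f(X_n)|^p]/n^{\beta}$ actually tend to $0$ rather than be merely bounded, which is precisely what would fail at $\varepsilon=0$.
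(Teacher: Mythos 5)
Your proof is correct and takes the same approach as the paper: apply $P^n$ to the defining estimate $|f|^p \le \|f\|_{\cal E_u^p}^p(u-Pu+b\un_C)$, sum against $(n+1)^{-p/(p-\varepsilon)}$, and invoke Lemma~\ref{lemma:sommation_Abel} to obtain the summability $\sum_n \esp_x|f(X_n)|^p/(n+1)^{p/(p-\varepsilon)} < +\infty$. The paper then simply states that this finishes the proof, leaving implicit the Borel--Cantelli and Jensen deductions that you spell out explicitly.
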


\begin{remark}
We will use this lemma with $p>1$ and $p-\varepsilon=1$ and with $p>2$ and $p-\varepsilon=2$.
\end{remark}

\begin{proof}
With the notations of the lemma, let's compute, for any $n\in \N$,
\[
\esp_x |f(X_n)|^{p} \leqslant \|f\|_{\eupp} \esp_x u(X_n) - Pu(X_n) + b = \|f\|_{\eupp} P^n(u-Pu+b)
\]
And so, assuming without any loss of generality that $ \|f\|_{\eupp} = 1$, we get
\begin{flalign*}
\sum_{k=0}^n \frac{\esp_x |f(X_k)|^{p}}{(k+1)^{p/(p-\varepsilon)}} &\leqslant \sum_{k=0}^n \frac{P^k(u-Pu)}{(k+1)^{1 + \varepsilon/(p-\varepsilon)}} + b\sum_{k=0}^n \frac 1 {(k+1)^{1 + \varepsilon/(p-\varepsilon)}} \\& \leqslant u(x) + b \sum_{n\in \N^\ast} \frac 1 {n^{1 + \varepsilon/(p-\varepsilon)}} 
\end{flalign*}
where we used lemma~\ref{lemma:sommation_Abel} to control the first sum.

Thus, for any $x\in \X$ such that $u(x)$ is finie,
\[
\sum_{k=0}^{+\infty} \esp_x \left( \frac{|f(X_k)|}{(k+1)^{1/(p-\varepsilon)}} \right)^{p}
\]
is finite and this finishes the proof.
\end{proof}

\begin{proposition}
\label{proposition:lln_martingales}
Let $u$ be a drift function and $p\in ]1,+\infty[$. For any $f\in\eupp$ and $x\in X$,
\[\frac 1 n \sum_{k=0}^{n-1} f(X_{k+1})-Pf(X_k)\xrightarrow \, 0  \;\;\prob_x-a.e.\textnormal{ and in } \mathrm L^{p}(\prob_x)\]
\end{proposition}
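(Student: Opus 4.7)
The plan is to view $M_n := \sum_{k=0}^{n-1}\bigl(f(X_{k+1})-Pf(X_k)\bigr)$ as a martingale with respect to the natural filtration $\mathcal F_n = \sigma(X_0,\dots,X_n)$, since the Markov property gives $\esp\bigl[f(X_{k+1})\mid \mathcal F_k\bigr] = Pf(X_k)$. The increments $D_{k+1}=f(X_{k+1})-Pf(X_k)$ are controlled by the drift: using the convexity bound $|a-b|^p\le 2^{p-1}(|a|^p+|b|^p)$ and Jensen ($|Pf|^p\le P|f|^p$), then invoking the definition of $\eupp$,
\[
\esp_x|D_{k+1}|^p \;\le\; 2^{p-1}\bigl(\esp_x|f(X_{k+1})|^p + \esp_x P|f|^p(X_k)\bigr) \;\le\; 2^p\,\|f\|_{\eupp}^p\,P^{k+1}(u-Pu+b\un_C)(x).
\]

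To get the $L^p(\prob_x)$ convergence, I would apply a Burkholder-type moment inequality to the martingale $M_n$. For $p\in(1,2]$, the square-function inequality combined with subadditivity of $x\mapsto x^{p/2}$ gives $\esp|M_n|^p\le C_p\sum_{k=1}^n\esp|D_k|^p$; for $p>2$, Burkholder together with Hölder yields $\esp|M_n|^p\le C_p\,n^{p/2-1}\sum_{k=1}^n\esp|D_k|^p$. The telescoping bound $\sum_{k=0}^{n-1}P^k(u-Pu+b\un_C)(x)\le u(x)+bn$ then produces $\esp_x|M_n/n|^p\le C(u(x)+bn)/n^{\min(p,\,p/2+1)}$, which vanishes as $n\to\infty$ for every $p>1$.

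For the almost-sure convergence, the key estimate comes from Lemma~\ref{lemma:sommation_Abel} applied with $\alpha=p$: combining it with the increment bound above gives
\[
\sum_{k\ge 0}\frac{\esp_x|D_{k+1}|^p}{(k+1)^p} \;\le\; 2^p\|f\|_{\eupp}^p\Bigl(u(x) + b\sum_{k\ge 1}\frac{1}{k^p}\Bigr) \;<\;\infty.
\]
When $p\in(1,2]$ this is the hypothesis of the Chow--Brown martingale strong law of large numbers (cf.~\cite{Br60}), so $M_n/n\to 0$ $\prob_x$-a.s.; equivalently, one can observe that the martingale $\sum_{k}D_k/k$ is Cauchy in $L^p$ by the same Burkholder inequality, hence converges a.s., and then invoke Kronecker's lemma. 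When $p>2$ the $L^p$ estimate from the previous paragraph gives $\esp_x|M_n/n|^p\le C(u(x)/n^{p/2+1}+b/n^{p/2})$, which is summable in $n$ since $p/2>1$; Tonelli yields $\sum_n|M_n/n|^p<\infty$ $\prob_x$-a.s., whence $M_n/n\to 0$ $\prob_x$-a.s.

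The main obstacle is the dichotomy $p\in(1,2]$ versus $p>2$. One cannot simply reduce the latter to the former by embedding $\eupp$ into $\cal E_u^2$, because the drift term $u-Pu+b\un_C$ need not be bounded below by a positive constant, so $(u-Pu+b\un_C)^{2/p}$ is not controlled by a multiple of $u-Pu+b\un_C$; this forces the two separate treatments above. Once this split is made, both the $L^p$ convergence and the a.s.\ convergence follow from the single moment bound derived from the drift inequality and Lemma~\ref{lemma:sommation_Abel}.
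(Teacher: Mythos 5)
Your proposal uses exactly the paper's decomposition: write $M_n=\sum_{k=0}^{n-1}\bigl(f(X_{k+1})-Pf(X_k)\bigr)$, recognize it as a martingale, bound $\esp_x|D_{k+1}|^p$ by $2^p\|f\|_{\eupp}^p P^{k+1}(u-Pu+b\un_C)(x)$ via convexity and Jensen, and control the resulting series with the Abel-summation estimate of Lemma~\ref{lemma:sommation_Abel}. Up to this point the two proofs coincide (the paper's constant $2^{p-1}$ in this step is actually $2^p$, as you have it, but this is immaterial). Where you genuinely diverge is the final invocation of a martingale LLN. The paper appeals to Theorem~2.18 of Hall--Heyde for all $p>1$, but that theorem (the Chow-type strong law) is stated for $1\le p\le 2$; your remark that one cannot reduce $p>2$ to $p=2$ by embedding $\eupp\subset\cal E_u^2$---because $u-Pu+b\un_C$ need not be bounded away from $0$---is correct and identifies a real point that the paper glosses over. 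Your fix for $p>2$, namely applying Burkholder together with H\"older to get $\esp_x|M_n|^p\le C_p\,n^{p/2-1}\sum_{k\le n}\esp_x|D_k|^p\le C'(u(x)+bn)\,n^{p/2-1}$, and then deducing both the $L^p$ convergence and (via summability of $\esp_x|M_n/n|^p$ and Tonelli) the a.s.\ convergence, is valid; it also produces the asserted $L^p$ convergence more transparently than the paper's bare citation, which strictly speaking only addresses the a.s.\ part. So: same strategy, but your version is the more careful one, and the dichotomy you introduce is needed rather than optional.
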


\begin{proof}
For any $n\in \N^\ast$, let $M_n= \sum_{k=0}^{n-1} f(X_{k+1})-Pf(X_k)$.

Then, $(M_n)$ is a martingale with $\esp M_n = 0$ and
\begin{flalign*}
\esp_x |M_{n+1}-M_n|^{p} &=\esp_x |f(X_{n+1})-Pf(X_{n})|^{p}=P^n(\esp_x|f(X_1)-Pf(x)|^{p}) &\\
& \leqslant 2^{p-1} P^{n+1}(|f|^{p})(x)\leqslant 2^{p-1}\|f\|_{\cal E_u^p} P^{n+1}(u-Pu+b)& 
\end{flalign*}
Thus,
\begin{flalign*}
\sum_{n=1}^{+\infty} \frac 1 {n^{p}} \esp_x |M_{n+1}-M_n|^{p} &\leqslant 2^{p-1}\|f\|_{\cal E_u^p}\sum_{n=1}^{+\infty} \frac {P^{n+1}(u-Pu+b)}{n^{p}} \\
&\leqslant 2^{p-1}\|f\|_{\cal E_u^p} \left( u(x)+b\sum_{k=0}^{+\infty} \frac 1 {n^{p}}\right)
\end{flalign*}
And so, according to the law of large numbers for martingales (see the theorem 2.18 in \cite{HH80}), we get that $\frac 1 n M_n\xrightarrow \,0$ $\prob_x-$a.e. and in $\mathrm L^{p}(\prob_x)$.
\end{proof}

\begin{lemma} \label{lemma:produit_cesaro}
Let $u$ be a drift function such that $Pu \leqslant au+b$ for some $a\in ]0,1[$ and $b\in \R$ and let $p\in ]1,+\infty[$.

Let $\psi: \N \to \R_+$ be a decreasing function converging to $0$ at $+\infty$.

Then, for any $f\in \cal E_u^p$,
\[
\frac 1 n \sum_{k=0}^{n-1} \psi(k) f(X_k) \xrightarrow\,0 \;\;\prob_x-\text{a.e. and in }\mathrm{L}^p(\prob_x)
\]
\end{lemma}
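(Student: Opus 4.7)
The proof splits into the $L^p$ claim and the almost-sure claim. For the $L^p$ part, the geometric drift condition iterates to $P^k u \leq a^k u + b/(1-a)$, so $\esp_x |f(X_k)|^p = P^k |f|^p(x) \leq \|f\|_{\eupp}^p (P^k u(x) + b)$ is bounded uniformly in $k$ by some constant $K(x) < \infty$. Jensen's inequality applied to the average then gives
\[
\esp_x\Bigl|\tfrac{1}{n}\sum_{k=0}^{n-1}\psi(k)f(X_k)\Bigr|^p \leq \frac{K(x)}{n}\sum_{k=0}^{n-1}\psi(k)^p,
\]
which vanishes by Ces\`aro since $\psi(k)^p \to 0$.

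For the almost-sure part, I decompose via the martingale $N_n = \sum_{k=1}^{n-1}\psi(k)\bigl(f(X_k) - Pf(X_{k-1})\bigr)$. A direct computation, as in Proposition~\ref{proposition:lln_martingales}, yields $\esp_x|N_{k+1}-N_k|^p \leq C\psi(k)^p P^k(u - Pu + b)$; combined with Lemma~\ref{lemma:sommation_Abel} and the summability of $k^{-p}$ for $p>1$, this gives $\sum_k k^{-p}\esp_x|N_{k+1}-N_k|^p < \infty$, and Theorem~2.18 of~\cite{HH80} implies $N_n/n \to 0$ $\prob_x$-a.s. It therefore remains to show that the residual
\[
R_n := \frac{1}{n}\sum_{k=1}^{n-1}\psi(k)\,Pf(X_{k-1})
\]
also tends to $0$ almost surely.

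The key step is to establish the a.s.\ boundedness of $S_n := \tfrac{1}{n}\sum_{k=0}^{n-1} u^{1/p}(X_k)$. By concavity and subadditivity of $t \mapsto t^{1/p}$, one has $P u^{1/p} \leq (Pu)^{1/p} \leq a^{1/p} u^{1/p} + b^{1/p}$, so $u^{1/p}$ is itself a geometric drift with ratio $a^{1/p} < 1$. Applying the martingale LLN to $\sum_{k=0}^{n-1}\bigl(u^{1/p}(X_{k+1}) - Pu^{1/p}(X_k)\bigr)$---whose $p$-th increment moments are uniformly bounded since $\esp_x u(X_k) \leq u(x) + b/(1-a)$---and using Borel--Cantelli on $\prob_x\bigl(u^{1/p}(X_n) > \delta n\bigr) \leq K/(\delta n)^p$ to kill the boundary term, I obtain the self-improving inequality $(1 - a^{1/p}) S_n \leq b^{1/p} + o(1)$ a.s., whence $\limsup_n S_n \leq b^{1/p}/(1 - a^{1/p})$ $\prob_x$-a.s. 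Since $|Pf|^p \leq P|f|^p \leq M(Pu + b) \leq M(au + 2b)$ forces $|Pf| \leq C(u^{1/p} + 1)$, splitting $R_n$ at an index $N$ with $\psi(N) < \varepsilon$ produces a prefix of size $O(1/n)$ and a tail bounded by $\varepsilon C(S_n + 1)$, so $\limsup |R_n| \leq \varepsilon C\bigl(b^{1/p}/(1 - a^{1/p}) + 1\bigr)$ a.s.; letting $\varepsilon \to 0$ concludes. The main obstacle is precisely this a.s.\ bound on $S_n$: in the absence of a stationary measure one has to exploit the geometric rate $a<1$ through the self-improving martingale argument.
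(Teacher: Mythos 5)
Your proof is correct, and it hinges on the same core mechanism as the paper's: bound the Cesàro average of $u^{1/p}(X_k)$ almost surely using the drift $Pu^{1/p}\leqslant a^{1/p}u^{1/p}+b^{1/p}$ together with the martingale law of large numbers, then exploit $\psi\to 0$ by splitting the weighted sum at a large index. Nonetheless there are a few places where you add scaffolding the paper does without. First, you decompose $\sum\psi(k)f(X_k)$ into a martingale $N_n$ and a residual $R_n=\frac1n\sum\psi(k)Pf(X_{k-1})$ and then bound $|Pf|\leqslant C(u^{1/p}+1)$; this step is avoidable, since from $|f|^p\leqslant M(u-Pu+b)\leqslant M(1+b)u$ one has $|f|\leqslant C u^{1/p}$ directly, so the split-at-large-index argument applied to $\frac1n\sum\psi(k)|f(X_k)|$ already finishes the job without introducing $N_n$ or transferring to $Pf$ — and this is exactly what the paper does. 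Second, to kill the boundary term $u^{1/p}(X_n)/n$ you invoke Borel--Cantelli; the paper sidesteps this by noting that, for the upper bound on $S_n$, the telescoped boundary term $-u^{1/p}(X_n)/n$ has the favorable sign and can simply be dropped. Third, the paper's split uses a moving index $\lfloor\sqrt n\rfloor$ rather than a fixed $N$, which delivers the limit directly instead of a $\limsup$-then-$\varepsilon\to0$ argument, but both are valid. Finally, for the $L^p$ part the paper uses Minkowski where you use Jensen; these are interchangeable here. In short: same essential ideas, correctly executed, with some optional extra machinery.
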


\begin{proof}We shall assume without any loss of generality that $\|f\|_{\cal E_u^p}=1$.

To prove the convergence in $\mathrm{L}^p$, we compute
\begin{flalign*}
\left(\esp_x \left|\frac 1 n \sum_{k=0}^{n-1} \psi(k) f(X_k) \right|^p \right)^{1/p}&\leqslant \frac 1 n \sum_{k=0}^{n-1} \psi(k) \left(\esp_x|f(X_k)|^p\right)^{1/p}\\& \leqslant \frac 1 n \sum_{k=0}^{n-1} \varphi_k  \left( P^k u(x) \right)^{1/p} \\
& \leqslant  \frac 1 n \sum_{k=0}^{n-1} \varphi_k ( u(x) + b/(1-a))^{1/p}
\end{flalign*}
And we conclude with Cesaro's lemma.

\medskip
To study the a.e.-convergence, in a first time, we are going to prove the for any $x$ such that $u(x)$ is finite,
\begin{equation}\label{equation:lemme:produit_cesaro}
\limsup_{n} \frac 1 n \sum_{k=0}^{n-1} \left| f(X_k) \right| \leqslant\frac  {b(1+b)^{1/p}}  {1-a^{1/p}}\;\;\prob_x-\text{p.s.}
\end{equation}
First, we remark that for any $x\in \X$,
\[
|f(x)|^{p} \leqslant u(x) - Pu(x) + b \leqslant (1+b)u(x)
\]
Moreover, for any $r\in]0,1]$, we note $u_r$ the function defined by $u_r(x)=u(x)^r$. And so, using the concavity of the function $(t\mapsto t^r)$, we get that
\[
Pu_r \leqslant(Pu)^r \leqslant (au+b)^r \leqslant a^r u_r + b
\]
This means that $u_r \leqslant \frac 1{1-a^r} (u_r-Pu_r+b)$. And so, setting $r=1/p$, we obtain
\begin{flalign*}
\frac 1 n \sum_{k=0}^{n-1} \left| f(X_k) \right|& \leqslant \frac {(1+b)^{1/p}} n \sum_{k=0}^{n-1} u_{1/p}(X_k) \\
& \leqslant \frac  {(1+b)^{1/p}}  {1-a^{1/p}} \frac 1 n\sum_{k=0}^{n-1} u_{1/p}(X_k) - P u_{1/p}(X_k) + b \\
& \leqslant  \frac  {(1+b)^{1/p}}  {1-a^{1/p}} \left(\frac 1 n u(x) + b\right) + \frac  {(1+b)^{1/p}}  {1-a^{1/p}} \frac 1 n \sum_{k=0}^{n-1} u_{1/p}(X_{k+1}) - Pu_{1/p}(X_k)
\end{flalign*}
Moreover, by definition, $u_{1/p}^p = u \in \cal E_u^1$ and so, using proposition~\ref{proposition:lln_martingales}, we get
\[
\frac 1 n \sum_{k=0}^{n-1} u_{1/p}(X_{k+1}) - Pu_{1/p}(X_k)\xrightarrow\,0 \;\prob_x-\text{a.e.}
\]
This proves inequality~\ref{equation:lemme:produit_cesaro}. 

Thus, a.e., there is some $n_0\in \N$ such that for $n\geqslant n_0$,
\[
\frac 1 n \sum_{k=0}^{n-1} |f(X_k)| \leqslant 2 \frac  {b(1+b)^{1/p}}  {1-a^{1/p}}
\]
And so, for $n$ such that $\sqrt n\geqslant n_0$, we get
\begin{flalign*}
\frac 1 n \sum_{k=0}^{n-1} \psi(k) |f(X_k)| &\leqslant \frac {\psi(0)} n \sum_{k=0}^{\lfloor \sqrt{n} \rfloor -1} |f(X_k)| + \frac{\psi(\lfloor \sqrt n\rfloor)} n \sum_{k=\lfloor \sqrt n\rfloor}^{n-1} |f(X_k)| \\
& \leqslant  2 \frac  {b(1+b)^{1/p}}  {1-a^{1/p}} \left(\frac{\psi(0) }{\sqrt n} + \psi(\sqrt n) \right)  
\end{flalign*}
And, as $\psi$ converges to $0$, this finishes the proof of the lemma.
\end{proof}

Using the same ideas as in the proof of proposition~\ref{proposition:lln_martingales}, we can prove the
\begin{lemma}\label{lemma:lindeberg}
Let $u$ be a drift function and $p>2$.

Let $g\in\eupp$ and $x\in\X$ such that $u(x)$ is finite.

Then, for any $\varepsilon\in \R_+^\ast$
\[\frac 1 n \sum_{k=0}^{n-1} \esp_x \left((g(X_{k+1})-Pg(X_k))^2 \un_{ |g(X_{k+1})- Pg(X_k)|\geqslant\varepsilon\sqrt n}\right)\xrightarrow[n\to +\infty]\, 0\]
and
\[
\sum_{n=1}^{+\infty} \frac1 {\sqrt n} \esp_x \left(\left|g(X_{n+1})-Pg(X_n) \right| \un_{ |g(X_{n+1})- Pg(X_n)|\geqslant\varepsilon\sqrt n} \right)\text{ is finite}
\]
Finally, there is $\delta \in\R_+^\ast$ such that
\[
\sum_{n=1}^{+\infty} \frac 1 {n^2} \esp_x \left((g(X_{n+1})-Pg(X_n))^4 \un_{ |g(X_{n+1})- Pg(X_n)|\leqslant\delta \sqrt n}\right) 
\]
is finite.
\end{lemma}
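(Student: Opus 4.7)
The plan is to reduce all three bounds to a single master estimate on $\esp_x|D_n|^p$, where $D_n := g(X_{n+1}) - Pg(X_n)$ is the martingale increment. From $g\in\eupp$ and the triangle inequality we have $|D_n|^p \leq 2^{p-1}(|g(X_{n+1})|^p + |Pg(X_n)|^p)$, and since $|Pg|^p \leq P|g|^p$, the stationarity of $P$ yields
\[
\esp_x |D_n|^p \leq 2^p \|g\|_{\eupp}^p\, P^{n+1}(u - Pu + b)(x).
\]
This is exactly the shape of quantity whose Abel sum is controlled by Lemma~\ref{lemma:sommation_Abel}: for any $\alpha \geq 0$, $\sum_{k\geq 0}P^k(u-Pu)(x)/(k+1)^\alpha \leq u(x)$, and the residual $b$-term contributes a convergent tail as soon as $\alpha > 1$.

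For assertion (i), I would apply the Markov-type inequality $\un_{|D_k|\geq \varepsilon\sqrt n}\leq |D_k|^{p-2}/(\varepsilon\sqrt n)^{p-2}$ to get $\esp_x(D_k^2\un_{|D_k|\geq\varepsilon\sqrt n})\leq \esp_x|D_k|^p/(\varepsilon\sqrt n)^{p-2}$, average over $k\leq n-1$, and telescope $\sum_{k=0}^{n-1}P^{k+1}(u-Pu)(x)\leq u(x)$. The resulting bound is of order $(u(x)+nb)/n^{p/2}$, which tends to $0$ since $p>2$. For assertion (ii), I would instead use $\un_{|D_n|\geq\varepsilon\sqrt n}\leq |D_n|^{p-1}/(\varepsilon\sqrt n)^{p-1}$, so that the series becomes essentially $\sum_n \esp_x|D_n|^p/n^{p/2}$; its finiteness follows directly from the master estimate together with Lemma~\ref{lemma:sommation_Abel} applied with $\alpha = p/2 > 1$.

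For assertion (iii) I expect the only mildly subtle point, and the main obstacle, to be the interplay between the exponent $4$ appearing inside the expectation and the exponent $p$ of the master estimate. I would split into two cases: if $2 < p \leq 4$, then on $\{|D_n|\leq\delta\sqrt n\}$ we have $D_n^4 \leq (\delta\sqrt n)^{4-p}|D_n|^p$, which brings the series back to $\sum_n \esp_x|D_n|^p/n^{p/2}$, already shown to converge in (ii), for any $\delta>0$. If $p > 4$, the truncation is no longer needed: one uses $D_n^4 \leq 1 + |D_n|^p$ and invokes Lemma~\ref{lemma:sommation_Abel} with $\alpha = 2$ to bound $\sum_n\esp_x|D_n|^p/n^2$ by a multiple of $u(x)$, plus the trivially convergent series $\sum n^{-2}$. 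In either case any $\delta\in\R_+^\ast$ works, giving the required existence.
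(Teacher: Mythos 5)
Your proposal is correct and follows essentially the same route as the paper: Markov-type truncation bounds reduce all three statements to the master estimate $\esp_x|D_n|^p \lesssim P^{n+1}(u-Pu+b)(x)$, which is then summed via Lemma~\ref{lemma:sommation_Abel}. Your separate handling of the case $p>4$ in the third assertion is a small but genuine improvement, since the paper's blanket claim that both remaining sums are bounded by $\sum_n \esp_x|D_n|^p/n^{p/2}$ relies on the inequality $D_n^4\un_{|D_n|\leqslant\delta\sqrt n}\leqslant(\delta\sqrt n)^{4-p}|D_n|^p$, which only holds when $p\leqslant 4$.
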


\begin{proof}
Using Markov's inequality, we can compute
\[
\esp_x \left(h(X_{k+1}, X_k)^2\un_{ |h(X_{k+1}, X_k)|\geqslant\varepsilon\sqrt n}\right)
\leqslant\frac { P^k\left(\esp\left(g(X_1)-Pg(X_0)\right)^{p}\right)} {\varepsilon^{p-2} n^{(p-2)/2}}
\]
where we noted $h(x,y)= g(x) - Pg(y)$.

But, $\esp_x\left[\left((g(X_1)-Pg(X_0)\right)^{p}\right]\in\eup$, since we took $g$ in $\eupp$.

So,
\begin{flalign*}
\frac 1 n \sum_{k=1}^n \esp_x \left(h(X_{k+1}, X_k)^2\un_{ |h(X_{k+1},X_k)|\geqslant\varepsilon\sqrt n}\right)
&\leqslant \frac C {n^{1+(p-2)/2} \varepsilon^{p-2}} \sum_{k=0}^{n-1} {P^k(u-Pu+b) }\\
&\leqslant  \frac C {n^{1+(p-2)/2} \varepsilon^{p-2}} u(x) + \frac {bC} {n^{(p-2)/2} \varepsilon^{p-2}}
\end{flalign*}
And the right side converges to $0$ since $u(x)$ is finite.

\medskip
The two sums that we have to study are bounded by constants times
\[
\sum_{n=1}^{+\infty} \frac {1} {n^{1+(p-2)/2}} \esp_x \left(\left|g(X_{n+1})-Pg(X_n)\right|^{p} \right) 
\]
and, once again, using that $g\in \eupp$, we get that
\[
 \esp_x \left(\left|g(X_{n+1})-Pg(X_n)\right|^{p} \right)  \leqslant \|g\|_{\eupp}  P^n(u-Pu+b)
\]
And we shall conclude with lemma~\ref{lemma:sommation_Abel}.
\end{proof}

Lemma~\ref{lemma:lindeberg} is important since it is a first step in the proof of the central limit theorem and the law of large numbers as we will see in next
\begin{corollary} \label{corollary:TCL_martingales} Let $u$ be a drift function and $p>2$.

Let $g\in\eupp$ and $x\in\X$ such that $u(x)$ is finite.

If
\[
\frac 1 n \sum_{k=0}^{n-1} P(g^2)(X_k) - (Pg(X_k))^2
\]
converges in $\mathrm{L}^1(\prob_x)$ and a.e. to some constant $\sigma^2(g,x)$,
then,
\[
\frac 1 {\sqrt n} \sum_{k=0}^{n-1} g(X_{k+1})-Pg(X_k) \xrightarrow[n\to\infty]{\mathcal L}\cal N(0,\sigma^2(g,x))
\]
Where we noted $\cal N(0,0)$ the Dirac mass at $0$.

Moreover, if $\sigma^2(g,x) \not=0$ then,
\[
\limsup \frac{\sum_{k=0}^{n-1} g(X_{k+1}) - Pg(X_k)}{\sqrt{2n \sigma^2(g,x) \ln\ln(n)}} =1 \;a.e.
\]
and
\[
\liminf \frac{\sum_{k=0}^{n-1} g(X_{k+1}) - Pg(X_k)}{\sqrt{2 n \sigma^2(g,x) \ln\ln(n)}} =-1 \;a.e.
\]
\end{corollary}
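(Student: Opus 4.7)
The plan is to recognize the sum as a martingale and then apply standard martingale CLT and LIL results, with the drift function hypotheses supplying exactly the preconditions. Set $\mathcal{F}_k = \sigma(X_0,\dots,X_k)$ and $D_k := g(X_{k+1})-Pg(X_k)$. By the Markov property $\esp_x[g(X_{k+1})|\mathcal{F}_k] = Pg(X_k)$, so $(M_n) = (\sum_{k=0}^{n-1} D_k)$ is an $(\mathcal{F}_k)$-martingale. Its conditional variances are
\[
\esp_x[D_k^2 \mid \mathcal{F}_k] \;=\; P(g^2)(X_k) - (Pg(X_k))^2,
\]
so the assumption is precisely that $\frac{1}{n}\sum_{k=0}^{n-1}\esp_x[D_k^2\mid \mathcal{F}_k]$ converges a.e. and in $\mathrm{L}^1(\prob_x)$ to $\sigma^2(g,x)$.

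For the CLT I would invoke a martingale central limit theorem (for instance Theorem~3.2 of~\cite{HH80}). Beyond the convergence of normalized conditional variances just established, the only ingredient needed is a Lindeberg-type condition of the form
\[
\frac{1}{n}\sum_{k=0}^{n-1}\esp_x\left(D_k^2 \un_{|D_k|\geqslant \varepsilon\sqrt{n}}\right) \xrightarrow[n\to\infty]\, 0 \qquad \forall \varepsilon>0,
\]
and this is exactly the first conclusion of Lemma~\ref{lemma:lindeberg}, applicable since $g\in\eupp$ with $p>2$ and $u(x)<\infty$. The case $\sigma^2(g,x)=0$ follows from Chebyshev's inequality directly using $\esp_x M_n^2 = \sum_{k=0}^{n-1}\esp_x[D_k^2\mid\mathcal{F}_k]$'s expectation, which is $o(n)$ by hypothesis.

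For the LIL, assuming $\sigma^2(g,x)\neq 0$, I would apply a martingale law of the iterated logarithm such as Stout's theorem (cf.\ Theorem~4.8 of~\cite{HH80} or the original paper of Stout). Besides the almost sure convergence of $\frac{1}{n}\sum \esp_x[D_k^2\mid\mathcal{F}_k]\to\sigma^2(g,x)$, such theorems typically require two quantitative conditions controlling the contribution of the large and small parts of the increments, namely
\[
\sum_{n\geqslant 1}\frac{1}{\sqrt{n}}\esp_x\!\left(|D_n|\un_{|D_n|\geqslant \varepsilon\sqrt{n}}\right) < \infty
\quad\text{and}\quad
\sum_{n\geqslant 1}\frac{1}{n^2}\esp_x\!\left(D_n^4 \un_{|D_n|\leqslant \delta\sqrt{n}}\right) < \infty,
\]
and these are precisely the second and third conclusions of Lemma~\ref{lemma:lindeberg}.

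The main obstacle is bookkeeping rather than substance. Lemma~\ref{lemma:lindeberg} provides the Lindeberg/tail conditions in integrated (unconditional) form, whereas many textbook versions of the martingale CLT and LIL state them conditionally. One therefore has to select, or lightly adapt, versions of these theorems whose hypotheses match the integrated form we have access to (which is standard: an integrated Lindeberg condition implies its conditional counterpart in probability via Markov's inequality, which is what the proofs in~\cite{HH80} actually use). Once the match is made, both conclusions follow immediately.
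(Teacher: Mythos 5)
Your proposal is correct and takes essentially the same route as the paper: recognize $M_n=\sum_{k<n}(g(X_{k+1})-Pg(X_k))$ as a martingale whose conditional variance is exactly $P(g^2)(X_k)-(Pg(X_k))^2$, then feed the hypothesis together with the three estimates of Lemma~\ref{lemma:lindeberg} into a martingale CLT (the paper invokes Brown's theorem~\cite{Bro71}, you cite Theorem~3.2 of~\cite{HH80}; these are interchangeable) and into Theorem~4.8 of~\cite{HH80} for the LIL. Your extra remarks — handling $\sigma^2=0$ by a direct $L^2$ bound and noting that the integrated Lindeberg condition implies the conditional one in probability — are sound and simply make explicit bookkeeping that the paper's two-line proof leaves to the reader.
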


\begin{proof}
The central limit theorem comes from Brown's one (cf~\cite{Bro71}) since the `` Lindeberg condition'' is satified when $g$ is dominated by a drift function as we saw in lemma~\ref{lemma:lindeberg}. 

The law of the iterated logarithm is given by corollary~4.2 and theorem~4.8 in~\cite{HH80} since the assumption is satisfied according to lemma~\ref{lemma:lindeberg}.
\end{proof}

\section{About the nullity of the variance} \label{subsection:nullite_variance}

\begin{miniabstract}
In this section, we study conditions under which the variance appearing in the central limit theorem and in the law of the iterated logarithm can not vanish.
\end{miniabstract}

Let $\G$ be a locally compact group acting continuously on a topological space $\X$ preserving the probability measure $\nu$.

We will always assume that the action of $\G$ on $\X$ is $\nu-$ergodic : this means that every measurable $\G-$invariant function is constant $\nu-$a.e.

\medskip
Let $\rho$ be a probability measure on $\G$ and $P$ the associated Markov operator on $\X$.

For any $f\in \mathrm{L}^2(\X,\nu)$, we have, using Jensen's inequality, that
\begin{flalign*}
\|Pf\|^2_2 &= \int_\X \left| \int_\G f(gx) \di\rho(g)\right|^2 \di \nu(x) \leqslant \int_\G \int_\X |f(gx)|^2 \di\nu(x) \di\rho(g) \\&\leqslant \int_\X  |f(x)|^2 \di\nu(x) = \|f\|_2^2 
\end{flalign*}
And so, the operator $P$ is continuous on $\mathrm{L}^2(\X,\nu)$ and $\|P\| \leqslant 1$. It is clear that $\|P\|=1$ since $P1=1$.

\medskip
In our study of the central limit theorem for some function $f$ on $\X$, the variance will always be given
\[
\sigma^2(f) = \|g\|^2_2 - \|Pg\|_2^2
\]
where $g \in \mathrm{L}^2(\X,\nu)$ is a function that we will have constructed such that $f-\int f\di\nu = g-Pg$ (in $\mathrm{L}^2(\X,\nu)$). It is therefore important to know if there can be some non-constant function $g\in \mathrm{L}^2(\X,\nu)$ such that $\|Pg\|_2 = \|g\|_2$.

\medskip
This question has been studied by Furman and Shalom in~\cite{FuSh99} where they prove that if the measure $\rho$ is aperiodic (that is to say that it's support is not included in a class of a subgroup of $\G$) then there is no non-constant function $f\in \mathrm{L}^2(\X,\nu)$ such that $\|Pf\|_2 = \|f\|_2$.

We prove in this section that the existence of such functions is equivalent to the existence of a subgroup $\Hb$ of $\G$ that does not act $\nu-$ergodically on $\X$ and of some $g\in \G$ such that $\supp\rho \subset \Hb g$. This will be our proposition~\ref{proposition:nullite_variance}.

\medskip
If $\rho$ is a borelian probability measure on $\G$, we note $\widetilde \rho$ the symmetrized measure. It is the probability measure defined for any borelian subset $A$ of $\G$ by
\[
\widetilde \rho(A) = \int_\G \un_A(g^{-1}) \di\rho(g)
\]

\begin{remark}
Since the measure $\nu$ is $\G-$invariant, we can compute, for any $f_1, f_2 \in \mathrm{L}^2(\X,\nu)$,
\begin{flalign*}
\int_\X f_2 P_{\rho}f_1 \di\nu &= \int_\G \int_\X f_1(gx) f_2(x) \di\nu(x) \di\rho(g) = \int_\G \int_\X f_1(x) f_2(g^{-1} x) \di\nu(x) \\ &= \int_\X f_1 P_{\tilde\rho}f_2 \di\nu
\end{flalign*}
So, the operator $P_{\tilde \rho}$ is the adjoint operator of $P_\rho$ in $\mathrm{L}^2(\X,\nu)$.
\end{remark}

\begin{remark} \label{remark:P_etoile_P}
In our definition of $P_{\rho}$, we make the element $g$ act on the left. Thus, if $\rho_1, \rho_2$ are borelian probability measures on $\G$, for any $f\in \mathrm{L}^2(\X,\nu)$ and any $x\in \X$, we get
\[
P_{\rho_1 } P_{\rho_2} f(x) = \int_\G P_{\rho_2 }f(g x) \di\rho_1(g) = \int_\G \int_\G f(g_2 g_1 x) \di\rho_1(g_1) \di\rho_2(g_2) = P_{\rho_2 \ast \rho_1} f(x)
\]
Thus, $P_{\rho_1} P_{\rho_2}$ is the operator associated to the measure $\rho_2 \ast \rho_1$. This inversion doesn't have any consequence in this article (since we always convol a measure with it's powers) but in this section we have to remember that the measure associated to $P^\ast P$ is $\rho\ast \tilde \rho$.
\end{remark}

\bigskip
First, we remark that for any $f\in \mathrm{L}^2(\X,\nu)$,
\[
\|f\|_2^2- \|Pf\|_2^2 = \int_\X f^2(y) - (Pf)^2(y) \di\nu(y) = \int_\X f(y) (I_d-P^\ast P)f(y) \di\nu(y) 
\]
where $P^\ast$ is the adjoint operator of $P$ in $\mathrm{L}^2(\X,\nu)$.

Moreover, we saw that $\|f\|^2 - \|Pf\|^2 \geqslant 0$.

\begin{lemma}
Let $\G$ be a group, $S\subset \G$ and $S^{-1} = \{g^{-1}|g\in S\}$.

Then, the subgroup of $\G$ generated by $SS^{-1} $ is the smallest subgroup $\Hb$ of $\G$ such that there is $g\in \G$ with $S \subset \Hb g$.
\end{lemma}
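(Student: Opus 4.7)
The proof is a short direct verification with two inclusions to check. Let $\Hb_0 = \langle SS^{-1}\rangle$ denote the subgroup of $\G$ generated by $SS^{-1}$. The plan is first to exhibit a concrete $g \in \G$ for which $S \subset \Hb_0 g$, and then to show that any subgroup $\Hb$ admitting some $g \in \G$ with $S \subset \Hb g$ must contain $\Hb_0$. The case $S = \emptyset$ is trivial (both sides equal $\{e\}$ or, rather, the claim holds vacuously for any $\Hb$), so I would assume $S$ is nonempty.

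For the existence part, I would pick any element $g_0 \in S$ and claim that $S \subset \Hb_0 g_0$. Indeed, for every $s \in S$, the element $sg_0^{-1}$ lies in $SS^{-1}$, hence in $\Hb_0$, so $s = (sg_0^{-1})g_0 \in \Hb_0 g_0$. This shows the set of candidate subgroups $\Hb$ such that $S \subset \Hb g$ for some $g$ is nonempty and contains $\Hb_0$.

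For the minimality part, suppose $\Hb$ is a subgroup of $\G$ and $g \in \G$ satisfies $S \subset \Hb g$. Then for all $s_1, s_2 \in S$ there are $h_1, h_2 \in \Hb$ with $s_i = h_i g$, whence
\[
s_1 s_2^{-1} = h_1 g g^{-1} h_2^{-1} = h_1 h_2^{-1} \in \Hb.
\]
Thus $SS^{-1} \subset \Hb$, and since $\Hb$ is a subgroup, it contains the subgroup generated by $SS^{-1}$, that is $\Hb_0 \subset \Hb$.

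There is no real obstacle here; the statement is essentially a definition-chase. The only subtlety worth flagging is that the translate $g$ making $S \subset \Hb_0 g$ work can be taken to be any element of $S$, but the \emph{subgroup} $\Hb_0$ itself is independent of this choice (as is visible from the characterization $\Hb_0 = \langle SS^{-1}\rangle$).
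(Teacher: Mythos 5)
Your proof is correct and follows essentially the same route as the paper: both directions are established by the same elementary manipulations, picking $g \in S$ for the containment $S \subset \Hb_0 g$ and computing $s_1 s_2^{-1} = h_1 h_2^{-1}$ for minimality. The only cosmetic difference is that the paper phrases the conclusion as an equivalence between $SS^{-1} \subset \Hb$ and the existence of a suitable $g$, whereas you split it into an existence step and a minimality step; the content is identical.
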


\begin{proof}
First, let $\Hb$ be a subgroup of $ \G$ and $g\in \G$. If $S\subset  \Hb g$ then $SS^{-1} \subset \Hb  g g^{-1} \Hb = \Hb$.

On the other hand, let $\Hb$ be a subgroup of $\G$ containing  $S S^{-1} $ and let $g\in S$.

Then, for any $h\in S$, we have that $h= h g^{-1} g$. But, $ h g^{-1} \in \Hb$ and so $h\in \Hb g$. This proves that $S\subset \Hb g$.

What we proved is that for any subgroup $\Hb$ of $\G$, we have the equivalence between ``$ S S^{-1} \subset \Hb$'' and ``there is $g\in \G$ such that $S\subset \Hb g$''. This proves the lemma since the subgroup of $\G$ generated by $ S S^{-1}$ is by definition the smallest subgroup of $\G$ containing $ SS^{-1} $.
\end{proof}

\begin{proposition} \label{proposition:nullite_variance}
Let $\G$ be a locally compact group acting continuously and ergodically on a topological space $\X$ endowed with a $\G-$invariant probability measure $\nu$.

Then, for any $f\in \mathrm{L}^2(\X,\nu)$, the three following assertions are equivalent
\begin{enumerate}
\item $\|Pf\|_2 = \|f\|_2$
\item For $\nu-$a.e. $x\in \X$ and $\rho\ast \tilde \rho-$a.e. $g\in \G$, $f(gx)=f(x)$.
\item There is some subgroup $\Hb$ of $\G$ and some $g\in \G$ such that $f$ is $\Hb-$invariant and $\supp \rho \subset \Hb g$.
\end{enumerate}
\end{proposition}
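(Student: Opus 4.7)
My plan is to prove the cycle $(1) \Leftrightarrow (2) \Rightarrow (3) \Rightarrow (1)$. The equivalence of (1) and (2) will follow from rewriting $\|f\|_2^2 - \|Pf\|_2^2$ as a symmetric nonnegative quadratic form built from the measure $\mu := \rho \ast \widetilde\rho$; the implication $(2) \Rightarrow (3)$ will come from the fact that the stabilizer of $f$ in $\G$ is a closed subgroup of full $\mu$-measure; and $(3) \Rightarrow (1)$ is a direct computation.

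For $(1) \Leftrightarrow (2)$: by Remark~\ref{remark:P_etoile_P}, $P^\ast P$ is the Markov operator $P_\mu$ associated to $\mu = \rho \ast \widetilde\rho$, and $\mu$ is symmetric since $\widetilde{\rho \ast \widetilde\rho} = \rho \ast \widetilde\rho$. Expanding
\[
\|f\|_2^2 - \|Pf\|_2^2 = \langle f, (I - P^\ast P)f \rangle = \int_\X \int_\G \bigl( f(x)^2 - f(x) f(gx) \bigr)\, \di\mu(g)\, \di\nu(x),
\]
and using the $\G$-invariance of $\nu$ together with $\widetilde\mu = \mu$ to symmetrize the cross-term, I obtain
\[
\|f\|_2^2 - \|Pf\|_2^2 = \tfrac{1}{2} \int_\X \int_\G \bigl( f(gx) - f(x) \bigr)^2 \, \di\mu(g)\, \di\nu(x).
\]
Both (1) and (2) are equivalent to the vanishing of this nonnegative integral.

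For $(2) \Rightarrow (3)$: consider the unitary representation $\pi$ of $\G$ on $\mathrm L^2(\X, \nu)$ defined by $\pi(g)f = f \circ g$; it is strongly continuous in our setting. Then
\[
\Hb := \{ g \in \G : f \circ g = f \text{ in } \mathrm L^2(\X, \nu) \}
\]
is a \emph{closed} subgroup of $\G$. Condition (2) together with Fubini yields $\mu(\Hb) = 1$, and closedness upgrades this to $\supp \mu \subset \Hb$. In particular, $\supp\rho \cdot (\supp\rho)^{-1} \subset \supp \mu \subset \Hb$, so by the preceding lemma applied to $S = \supp \rho$ there exists $g \in \G$ with $\supp \rho \subset \Hb g$; and $f$ is $\Hb$-invariant by construction.

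For $(3) \Rightarrow (1)$: if $\supp \rho \subset \Hb g$ and $f$ is $\Hb$-invariant, then for every $h \in \supp \rho$ one has $f(hx) = f(gx)$ for $\nu$-a.e. $x$, hence $Pf(x) = f(gx)$, and therefore $\|Pf\|_2 = \|f \circ g\|_2 = \|f\|_2$ by the $\G$-invariance of $\nu$. I expect the main subtlety to be the closedness of $\Hb$ in the implication $(2) \Rightarrow (3)$: without it, $\mu(\Hb) = 1$ does not upgrade to $\supp \mu \subset \Hb$, which is precisely what the preceding lemma requires to produce the coset decomposition.
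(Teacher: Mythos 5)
Your proof is correct and follows essentially the same route as the paper: $(1)\Leftrightarrow(2)$ via the identity $\|f\|_2^2-\|Pf\|_2^2=\tfrac12\int_\X\int_\G|f(gx)-f(x)|^2\,\di(\rho\ast\widetilde\rho)(g)\,\di\nu(x)$, then $(2)\Rightarrow(3)$ by combining the stabilizer of $f$ with the preceding lemma on $SS^{-1}$, and $(3)\Rightarrow(1)$ by the same direct computation. The one place where you go further than the paper is in $(2)\Rightarrow(3)$: the paper simply asserts that $(2)$ makes $f$ invariant under the subgroup generated by $(\supp\rho)(\supp\rho)^{-1}$, whereas you take $\Hb$ to be the $\mathrm L^2$-stabilizer of $f$, note it is a closed subgroup by strong continuity of the Koopman representation, observe $\mu(\Hb)=1$ with $\mu=\rho\ast\widetilde\rho$, and then use closedness to upgrade full measure to $\supp\mu\subset\Hb$ and hence $(\supp\rho)(\supp\rho)^{-1}\subset\Hb$. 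That upgrade is exactly the point the paper leaves implicit, and it is genuinely needed when $\G$ is not discrete, since a Borel set of full $\mu$-measure need not contain $\supp\mu$; for discrete $\G$ (the case in the paper's application) the two formulations coincide. So you are right to flag the closedness of $\Hb$ as the subtle step.
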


\begin{remark}
There can exist a non constant function $f\in \mathrm{L}^2(\X,\nu)$ such that $\|Pf\|_2 = \|f\|_2$ only if $\supp\rho$ is included in a right-class of a subgroup of $\G$ whose action on $\X$ is not $\nu-$ergodic.
\end{remark}

\begin{proof}
First, we remark that
\begin{flalign*}
\int_\X \int_\G |f(g x) - f(x)|^2 \di(\rho \ast \tilde\rho)(g) \di\nu(x) &= \int_\X 2|f(x)|^2 - 2 \Re( \overline{f(x)}P^\ast Pf(x)) \di\nu(x) \\
& = 2 \|f\|_2^2 - 2 \Re\left( \int \overline f P^\ast P f \di\nu \right) \\
&= 2 \|f\|^2_2 - 2 \|Pf\|_2^2
\end{flalign*}
So the first point implies the second one.

The second point implies that the function $f$ is invariant by the subgroup generated by $(\supp \rho) (\supp \rho)^{-1}$. But, according to the previous lemma, this subgroup is precisely the smallest subgroup $\Hb$ of $\G$ such that there is $g\in \G$ with $\supp \rho \subset \Hb g$. And so, the second point implies the third.

Finally, if there is some $g$ in $\G$ and a subgroup $\Hb$ such that $f$ is $\Hb-$invariant and $\supp\rho \subset \Hb g$, then, for $\nu-$a.e. $x\in \X$ and any $\gamma\in \supp\rho$, $f( \gamma x) = f(gx)$ and so,
\[
Pf(x) = \int_\G f(\gamma x) \di\rho(\gamma) = f(gx)
\]
Thus,
\[
\int_\X |Pf(x)|^2 \di\nu(x) = \int_\X |f(gx)|^2 \di\nu(x) = \int_\X |f(x)|^2\di\nu(x)
\]
And the third point implies the first one.
\end{proof}

\begin{corollary} \label{corollary:nullite_variance}
Let $\G$ be a locally compact group acting continuously and $\nu-$ergodically on a topological space $\X$ endowed with a $\G-$invariant probability measure $\nu$.

Let $\rho$ be a borelian probability measure on $\G$.

Let $g \in \mathrm L^2(\X,\nu)$ such that $\nu\left(\left\{ x\in \X\middle| \sup_n P^n g^2(x)<+\infty \right\}\right)=1$ and note $f=g-Pg$. Suppose that $\|g\|_2=\|Pg\|_2$ then, for $\nu-$a.e. $x\in \X$, the sequence $(\sum_{k=0}^n f(g_k \dots g_1 x))$ is bounded in $\mathrm{L}^2(\prob_x)$.

Moreover if $g$ belongs to $\mathrm{L}^\infty(\X)$ then, for $\nu-$a.e. $x\in \X$, we have that the sequence $(\sum_{k=0}^n f(g_k \dots g_1 x))$ is bounded in $\mathrm{L}^\infty(\prob_x)$.
\end{corollary}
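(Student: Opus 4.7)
The plan is to apply Gordin's telescoping trick directly, since $f$ is already presented as $g-Pg$. Setting $X_k = g_k\cdots g_1 x$, $\mathcal F_k = \sigma(X_0,\dots,X_k)$ and $S_n := \sum_{k=0}^{n} f(X_k)$, I add and subtract $g(X_{k+1})$ in each summand to write
\[
S_n \;=\; g(X_0) - g(X_{n+1}) + M_{n+1}, \qquad M_{n+1} \;:=\; \sum_{k=0}^{n} \bigl(g(X_{k+1}) - Pg(X_k)\bigr).
\]
The process $(M_n)$ is a $(\mathcal F_n,\prob_x)$-martingale for every $x$ at which the increments are square-integrable. The two boundary terms are harmless: $g(X_0)=g(x)$ is deterministic, and $\|g(X_{n+1})\|_{L^2(\prob_x)}^2 = P^{n+1} g^2(x)$ is bounded in $n$ by the standing hypothesis $\sup_n P^n g^2(x)<+\infty$, which holds for $\nu$-a.e.\ $x$. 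Everything therefore reduces to showing that, for $\nu$-a.e.\ $x$, the martingale $M_n$ is bounded in $L^2(\prob_x)$.

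For this, a conditional Jensen computation gives
\[
\esp_x\!\bigl[(g(X_{k+1})-Pg(X_k))^2 \mid \mathcal F_k\bigr] \;=\; P(g^2)(X_k) - (Pg)^2(X_k),
\]
and this quantity is non-negative pointwise (again by the conditional Jensen inequality built into $P$). By orthogonality of martingale increments,
\[
\|M_n\|_{L^2(\prob_x)}^2 \;=\; \sum_{k=0}^{n-1} P^k\!\bigl[P(g^2)-(Pg)^2\bigr](x).
\]
Now I integrate against the $P$-stationary measure $\nu$: using $\int P^k h\,d\nu = \int h\,d\nu$,
\[
\int \bigl[P(g^2)-(Pg)^2\bigr] d\nu \;=\; \|g\|_2^{\,2}-\|Pg\|_2^{\,2} \;=\; 0
\]
by the hypothesis $\|Pg\|_2=\|g\|_2$. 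Since the integrand is non-negative, $P(g^2)=(Pg)^2$ holds $\nu$-a.e.; and since $P^k$ preserves non-negativity while $\nu$ is $P$-invariant, $P^k[P(g^2)-(Pg)^2] = 0$ $\nu$-a.e.\ for every $k\in\N$. Intersecting countably many full-measure sets, for $\nu$-a.e.\ $x$ we obtain $\esp_x[M_n^2]=0$ for all $n$, i.e.\ $M_n\equiv 0$ $\prob_x$-a.s.

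Putting the two parts together, for $\nu$-a.e.\ $x$ the sequence $S_n$ collapses to $g(X_0)-g(X_{n+1})$, so
\[
\|S_n\|_{L^2(\prob_x)} \;\leqslant\; |g(x)| + \sqrt{\sup_m P^m g^2(x)},
\]
which is finite by hypothesis, proving $L^2(\prob_x)$-boundedness. For the second assertion, $g\in L^\infty(\X)$ makes the hypothesis $\sup_m P^m g^2<+\infty$ trivial everywhere, and one gets the pointwise bound $|S_n|\leqslant 2\|g\|_\infty$ $\prob_x$-a.s., i.e.\ boundedness in $L^\infty(\prob_x)$. The only slightly subtle step is the transfer from "$P(g^2)=(Pg)^2$ $\nu$-a.e." to "$M_n=0$ $\prob_x$-a.s.\ for $\nu$-a.e.\ $x$"; this is not really an obstacle, it is just the standard use of $P$-invariance of $\nu$ combined with countability of $\N$ to iterate the one-step identity along the trajectory.
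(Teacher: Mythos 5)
Your proof is correct, and it takes a genuinely different route from the paper's. The paper invokes Proposition~\ref{proposition:nullite_variance}: from $\|Pg\|_2 = \|g\|_2$ it extracts a subgroup $\Hb\leqslant\G$ and $\gamma\in\G$ with $g$ $\Hb$-invariant and $\supp\rho\subset\Hb\gamma$, so that $f(y)=g(y)-g(\gamma y)$ $\nu$-a.e.\ and the sum telescopes pathwise to $\sum_{k=0}^{n-1}f(X_k)=g(x)-g(X_n)$. You instead write the full Gordin decomposition $S_n = g(X_0)-g(X_{n+1})+M_{n+1}$, compute the quadratic variation $\|M_n\|^2_{L^2(\prob_x)} = \sum_{k<n} P^k\bigl[P(g^2)-(Pg)^2\bigr](x)$, and kill it by noting that the (pointwise nonnegative, by conditional Jensen) integrand integrates to $\|g\|_2^2-\|Pg\|_2^2=0$ against $\nu$, hence vanishes $\nu$-a.e., and so do its $P^k$-images since $\nu$ is $P$-invariant. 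Both arguments reach the same identity $S_n = g(X_0)-g(X_{n+1})$ for $\nu$-a.e.\ $x$. Your version is more elementary and in one respect strictly more general: it never uses the group structure or Proposition~\ref{proposition:nullite_variance}, and in particular the ergodicity hypothesis on the $\G$-action is never invoked — only $P$-stationarity of $\nu$ and Jensen are needed. What the paper's detour through Proposition~\ref{proposition:nullite_variance} buys is structural information (the coset picture and the explicit form $f = g - g(\gamma\,\cdot\,)$) that is not actually needed here, though it is used for the accompanying remarks and the explicit example that follows the corollary.
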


\begin{proof}
According to the previous proposition, if $\|g\|=\|Pg\|$, there is some $\gamma \in \G$ and a subgroup $\Hb$ of $\G$ such that $\supp \rho \subset \Hb \gamma$ and $g$ is $\Hb-$invariant.

So, for $\nu-$a.e. $x\in \X$ and $\rho-$a.e. $g_1 \in \G$, $g(g_1 x) =g(\gamma x)$. In particular, $Pg(x) = g(\gamma x)$ and so, $f(x) = g(x) - g(\gamma x)$.

Thus, for $\nu-$a.e. $x\in \X$ and $\rho^{\otimes n}-$a.e. $(g_i) \in \G^n$,
\begin{flalign*}
\sum_{k=0}^{n-1} f(g_k \dots g_1 x) &= \sum_{k=0}^{n-1} g(g_k \dots g_1 x) - g(\gamma g_k \dots g_1 x) \\&= g(x) - g(g_n \dots g_1 x) + \sum_{k=0}^{n-1} g(g_{k+1} \dots g_1 x) - g(\gamma g_k \dots g_1 x) \\&= g(x) - g(g_n \dots g_1 x)
\end{flalign*}
This computation proves the corollary when the function $g$ is bounded.

Moreover, we have that
\begin{flalign*}
 \int_{\G^\N} \left| \sum_{k=0}^{n-1} f(g_k \dots g_1 x)\right|^2 \di\rho^{\otimes \N}((g_i)) &= g(x)^2 + P^n(g^2)(x) - 2 g(x)P^n g(x) \\&\leqslant g(x)^2 + P^n (g^2)(x) + 2 |g(x)| \sqrt{ P^n (g^2)(x)} \\
 & \leqslant 4\sup_n P^n (g^2)(x)
\end{flalign*}
Where we used Jensen's inequality to say that $|P^n g(x)| \leqslant \sqrt{P^n g^2(x)}$.

This finishes the proof of the corollary.
\end{proof}

The following example is an illustration of the previous corollary in an explicit context.
\begin{example}
Let
\[
A=\left(\begin{array}{cc} 2 & 1 \\ 1 & 1\end{array}\right)\text{ et }B =\left(\begin{array}{cc} 0 & 1 \\ -1 & 0\end{array}\right)
\]

Then, the subgroup of $\mathrm{SL}_2(\R)$ generated by $A$ and $B$ is Zariski-dense and the Lebesgue measure $\nu$ on the torus $\T^2=\R^2/\Z^2$ is ergodic.

Let $\rho = \frac 1 2 \delta_A + \frac 1 2 \delta_{BA}$.

Guivarc'h proved in \cite{Gu06} that the operator $P$ associated to $\rho$ has a spectral gap in $\mathrm{L}^2(\T^2, \nu)$.

Let $\|\,.\,\|$ be the distance induced on $\T^2 = \R^2/\Z^2$ by the euclidean norm on $\R^2$.
And let $g$ be the function defined for any $x\in \T^2$ by $g(x) = \|x\|$.

Then, for any $x\in \T^2$,
\[
Pg(x) = \frac 1 2 \|Ax\| + \frac 1 2 \|BA x\| = \|Ax\|= g(Ax)
\]
and
\[
\int_\X |Pg(x)|^2 \di\nu(x) = \int_\X |g(Ax)|^2 \di\nu(x) = \int_\X |g(x)|^2 \di\nu(x)
\]
Moreover, if we note $f=g-Pg$, then, for any $x\in \X$, $n\in \N$ and any $(g_1, \dots g_n) \in \{A, BA\}^n$, we have that
\[
g(g_{n+1} \dots g_1 x) =g(A g_n \dots g_1x)
\]
and so,
\begin{flalign*}
\sum_{k=0}^{n-1} f(g_k \dots g_1 x) &= g(x) - g(g_n \dots g_1 x) + \sum_{k=0}^{n-1} g(g_{k+1} \dots g_1 x) - g(A g_k \dots g_1x) \\&= g(x) - g(g_n \dots g_1 x)
\end{flalign*}
This proves that for any $x\in \X$, the sequence $(\sum_{k=0}^{n-1} f(g_k \dots g_1 x))$ is bounded in $\mathrm{L}^\infty (\prob_x)$.
\end{example}
\section{Application to the random walk on the torus} \label{section:tore_point_diophantien}

\begin{miniabstract}
In this section, we go back to the random walk on the torus. The law of large numbers is known as a corollary of a theorem in~\cite{BFLM11} which allow one to have a speed of convergence depending on the diophantine properties of the starting point. We use this to prove the central limit theorem and the law of the iterated logarithm.
\end{miniabstract}

Let $\Hb$ be a subgroup of $\mathrm{SL}_d(\R)$.
We say that the action of $\Hb$ on $\R^d$ is \emph{strongly irreducible} if $\Hb$ doesn't fixe any finite union of proper subspaces of $\R^d$ and that it is \emph{proximal} if for some $h\in \Hb$ we have a decomposition $\R^d = V_h^+ \oplus V_h^<$ of $\R^d$ into an $h-$invariant line $V_h^+$ and an $h-$invariant hyperplane $V_h^<$ such that the spectral radius of $h$ restricted to $V_h^<$ is strictly smaller than the one of $h$ restricted to $V_h^+$.

We say that the group $\Hb$ is strongly irreducible and proximal if it's action is.

\medskip
If we also assume that $\Hb $ is a subgroup of $ \mathrm{SL}_d(\Z) $, then it's action pass to the quotient $\X:=\T^d = \R^d/\Z^d$ that we endow with a metric defined by a norm on $\R^d$ and with Lebesgue's measure $\nu$. Moreover, $\Hb$ is strongly irreducible and proximal then any $a\in \Z^d \setminus\{0\}$ has an infinite $\Hb-$orbit and so, according to the proposition~1.5 in~\cite{BM00} the action of $\Hb$ on $\T^d$ is $\nu-$ergodic (every $\Hb-$invariant function is constant $\nu-$a.e.).

\medskip
Let $\rho$ be a probability measure on $\mathrm{SL}_d(\Z)$. We define a random walk on $\X$ noting, for $x\in \X$,
\[
\left\{\begin{array}{ccl} X_0 & = & x \\ X_{n+1} &=& g_{n+1} X_n \end{array} \right.
\]
where $(g_n) \in \mathrm{SL}_d(\Z)^\N$ is an iid sequence of random variables of common law $\rho$.

\medskip
In this constext, Bourgain, Furmann, Lindenstrauss and Mozes proves the following
\begin{theorem*}[\cite{BFLM11}] \label{theorem:BFLM}
Let $\rho$ be a borelian probability measure on $\mathrm{SL}_d(\Z)$ whose support generates a strongly irreducible and proximal group and which has an exponential moment\footnote{There is $\varepsilon\in \R_+^\ast$ such that
\[
\int_{\mathrm{SL}_d(\Z)} \|g\|^\varepsilon \di \rho(g) \text{ is finite}\]}.

Note
\[
\lambda_1 = \int_{\mathrm{SL}_d(\Z)} \int_{\prob(\R^d)} \ln \|gx\| \di\nu(x) \di \rho(g) >0
\]
where $\nu$ is the unique\footnote{The fact that $\lambda_1$ exists and is strictly non negative and that $\nu$ exists and is unique comes from a result in~\cite{GuRa85}.} $\rho-$stationary probability measure on $\prob(\R^d)$.

Then, for any $\varepsilon \in \R_+^\ast$, there is a constant $C$ such that for any $x\in \T^d$, any $a\in \Z^d\setminus\{0\}$, any $t\in ]0,1/2]$ and any $n\in \N$ with $n\geqslant -C\ln t$, if
\[
|\widehat{\rho^{\ast n}\ast \delta_x }(a)| > 2t \|a\| 
\]
then, $x$ admits a rational approximation $p/q \in \Q^d/\Z^d$ satisfying
\[
d\left(x,\frac p q\right)\leqslant e^{-(\lambda_1- \varepsilon) n} \text{ and } |q| \leqslant t ^{-C}
\]
\end{theorem*}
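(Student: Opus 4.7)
The BFLM theorem is cited as a black box rather than reproved in the present paper; nevertheless, here is the strategy I would follow, essentially that of \cite{BFLM11}. First, I would dualize: since $\rho^{\ast n}\ast\delta_x$ is the law of $g_n\cdots g_1 x$, one has
\[
\widehat{\rho^{\ast n}\ast\delta_x}(a) = \esp\,e^{2\pi i\langle a,\,g_n\cdots g_1 x\rangle} = \esp\,e^{2\pi i\langle b_n,\,x\rangle},\qquad b_n := g_1^T\cdots g_n^T a\in\Z^d,
\]
so the hypothesis translates into a coherent phase bias of the integer vectors $b_n$ produced by the transpose walk starting at $a$. In particular, a random vector $b_n$ drawn from the law of $g_1^T\cdots g_n^T a$ must be concentrated, modulo $1$ in the pairing $\langle\cdot,x\rangle$, within a small arc.

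Second, I would bring in the Lyapunov and projective theory of the dual walk. The transpose measure $\check\rho$ inherits the strong irreducibility and proximality assumption, and by Guivarc'h--Raugi there is a unique stationary probability $\check\nu$ on $\prob(\R^d)$ carrying the same top Lyapunov exponent $\lambda_1$. The two quantitative inputs I would use are: (i) a large deviation estimate giving $\|b_n\|=e^{(\lambda_1+o(1))n}\|a\|$ outside an event of exponentially small probability, and (ii) the Le Page H\"older regularity of $\check\nu$, which controls the probability that $b_n/\|b_n\|$ lies within a small neighborhood of any prescribed hyperplane in $\R^d$.

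Third, I would turn the Fourier bias into a Diophantine statement. Partitioning the event $|\widehat{\rho^{\ast n}\ast \delta_x}(a)|>2t\|a\|$ by dyadic norm shells for $b_n$ and a finite H\"older net on $\prob(\R^d)$, a pigeonhole argument yields a set $\mathcal B\subset \Z^d$ of integer vectors of norm at most $e^{(\lambda_1+\varepsilon)n}$ whose phases $\langle b,x\rangle\bmod 1$ are all concentrated in a short arc. Differencing $d$ linearly independent elements of $\mathcal B$ selected to be directionally generic (using the regularity of $\check\nu$) produces a basis $(q_1,\dots,q_d)$ of a sublattice with $\|q_i\|\leqslant t^{-C}$ and $\|\langle q_i,x\rangle\|_{\R/\Z}\leqslant e^{-(\lambda_1-\varepsilon)n}$; inverting the corresponding linear system recovers a rational $p/q\in\Q^d/\Z^d$ with $|q|\leqslant t^{-C}$ and $d(x,p/q)\leqslant e^{-(\lambda_1-\varepsilon)n}$, as required.

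The hard step is the pigeonhole-to-basis extraction of the third paragraph: one must guarantee that the integer vectors carrying the phase bias are sufficiently spread across $\prob(\R^d)$ to yield $d$ independent directions and allow inversion of the linear system. This is where Bourgain's discretized sum--product estimates enter in \cite{BFLM11}, combined with the quantitative regularity of $\check\nu$ and a rather delicate counting of lattice points in each projective shell; these arithmetic-combinatorial inputs form the technical core of the argument, whereas the duality, the large deviations, and the existence of $\check\nu$ are standard in the theory of products of random matrices.
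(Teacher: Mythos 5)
The paper does not prove this theorem: it is Theorem~A of~\cite{BFLM11}, quoted verbatim and used as a black box throughout Section~\ref{section:tore_point_diophantien}, so there is no in-paper proof to compare yours against. Your outline is a fair high-level reconstruction of the BFLM argument: the Fourier dualization to the transpose walk on $\Z^d$, the large-deviation and Le Page--type H\"older regularity inputs for the stationary measure on $\prob(\R^d)$, the pigeonholing of the biased integer vectors into a short arc, the extraction of $d$ independent short vectors, and the invocation of Bourgain's discretized sum--product/non-concentration machinery as the technical core are all genuinely the main components of that paper. Two caveats worth flagging. First, the bootstrapping structure of BFLM is an induction on scales: one does not obtain the full Diophantine conclusion in one pigeonhole pass, but iteratively improves a granularity parameter, and the sum--product estimate is what drives each step of the induction; your sketch compresses this into a single extraction, which hides where most of the work actually lies. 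Second, the recovery of $p/q$ from the short basis $(q_1,\dots,q_d)$ requires quantitative control on the determinant of the corresponding integer matrix (hence on $|q|$), and ensuring this determinant is nonzero and polynomially bounded in $t^{-1}$ is exactly where the ``directionally generic'' choice --- i.e.\ the regularity of $\check\nu$ away from proper subspaces --- must be applied with care; as stated, this step is more delicate than a generic linear-algebra inversion. For the purposes of the present paper, though, nothing more than the statement is needed, so treating it as a citation, as you do, is the intended reading.
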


In particular, this proves that if $x$ is irrational, then, for any $a\in \Z^d \setminus \{0\}$ and any $t\in ]0,1/2]$, there are only finitely many $n\in \N$ such that $|\widehat{\rho^{\ast n}\ast \delta_x }(a)| > 2t \|a\| $. This proves that for any irrational point $x$ in $\T^d$ and any $a\in \Z^d \setminus\{0\}$, 
\[
\lim_{n\to +\infty} \widehat {\rho^{\ast n} \ast \delta_x}(a)  =0
\]
And so, using Weyl's equidistribution criterion we have that for any continuous function $f$ on $\T^d$ and any irrational point $x\in \T^d$,
\begin{equation}\label{equation:BFLM}
\lim_{n\to +\infty} P^nf(x) = \int f\di\nu
\end{equation}
where $\nu$ is the Lebesgue measure on $\T^d$. Moreover, the speed of convergence depend on the diophantine properties of $x$ (see corollary $C$ in~\cite{BFLM11}).
In this section, we first want to obtain a more explicit speed of convergence in equation~\ref{equation:BFLM} in terms of diophantine properties of $x$. Then, we want to use this speed of convergence to prove the central limit theorem and the law of the iterated logarithm for starting points having good diophantine properties.

\medskip
In the first sub-section, we state a corollary of the theorem~\ref{theorem:BFLM} that is easier to deal with. The price we have to pay is that we will only be able to study hölder continuous functions. This will be proposition
\begin{proposition*}[\ref{proposition:vitesse_convergence_tore}]
Let $\rho$ be a borelian probability measure on $\mathrm{SL}_d(\Z)$ whose support generates a strongly irreducible and proximal group and which has an exponential moment.

Then for any $\gamma, \delta\in ]0,1]$ and any strictly non-decreasing function $\varphi : \R_+ \to \R_+^\ast$ with
\[
\liminf \frac{ \ln \varphi(s)}{\ln s} >0
\]
there are constants $C,C_0,C_1\in \R_+^\ast$such that for any $x\in \T^d$ and any $n\in \N$,
\[
\cal W_\gamma (\rho^{\ast n}\ast \delta_x , \nu) \leqslant C\psi(n) h_\varphi(x)^\delta
\]
where $h_\varphi$ is the function defined for any $x\in \T^d$ by
\[
h_\varphi(x) = \sup_{p/q \in \Q^d/\Z^d} \frac 1 {\varphi(q) d(x,p/q)}
\]
the function $\psi$ is defined by
\[
\psi(t) = \left(\varphi^{-1} (e^{C_1t}) \right)^{-C_0}
\]
and $\cal W_\gamma$ is the Wasserstein distance defined for any probability measure $\vartheta_1, \vartheta_2$ on the torus by
\[
\cal W_\gamma(\vartheta_1, \vartheta_2) = \sup_{\substack{ f\in \cal C^{0,\gamma}(\T^d) \\ \| f\|_\gamma \leqslant 1}} \left| \int_\X f\di\vartheta_1 - \int_\X f\di\vartheta_2 \right|
\]
Where $\cal C^{0,\gamma}(\T^d)$ and $\|f\|_\gamma$ where defined in equation~\ref{equation:norme_gamma}.
\end{proposition*}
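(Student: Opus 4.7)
The plan is to combine Fourier analysis on the torus with the theorem of Bourgain--Furman--Lindenstrauss--Mozes stated above to control each Fourier coefficient of $\mu_n := \rho^{\ast n} \ast \delta_x$, then to bound the Wasserstein distance by a standard smoothing argument, and finally to split the resulting estimate into the product form $\psi(n) h_\varphi(x)^\delta$ via a two-regime case analysis.

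First I would reduce to controlling Fourier coefficients. For a $\gamma$-Hölder function $f$ with $\|f\|_\gamma \leqslant 1$, a standard translation argument gives $|\hat f(a)| \lesssim \|a\|^{-\gamma}$ for $a \in \Z^d \setminus \{0\}$, but since $\gamma$ is typically $\leqslant d$ the naive sum diverges. To remedy this I would introduce a Schwartz mollifier $\phi_\eta$ at scale $\eta > 0$, for which $\|f - f \ast \phi_\eta\|_\infty \lesssim \eta^\gamma$ and
\[
\cal W_\gamma(\mu_n, \nu) \lesssim \eta^\gamma + \sum_{a \neq 0} \|a\|^{-\gamma} |\hat\phi_\eta(a)| |\hat\mu_n(a)|.
\]
If a uniform bound of the form $|\hat\mu_n(a)| \leqslant 2t\|a\|$ holds for every $a \neq 0$, then the sum is dominated by $t \eta^{-(d+1-\gamma)}$, and optimizing $\eta = t^{1/(d+1)}$ gives $\cal W_\gamma(\mu_n, \nu) \lesssim t^{\gamma/(d+1)}$.

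Next I would invoke the BFLM theorem to produce such a uniform Fourier bound. Its contrapositive says: if $|\hat\mu_n(a)| > 2t\|a\|$ for some $t \in (0, 1/2]$ with $n \geqslant -C_\varepsilon \ln t$, then $x$ admits a rational approximation $p/q$ with $d(x, p/q) \leqslant e^{-(\lambda_1 - \varepsilon)n}$ and $|q| \leqslant t^{-C_\varepsilon}$, forcing $h_\varphi(x) \geqslant e^{(\lambda_1 - \varepsilon)n}/\varphi(t^{-C_\varepsilon})$. So as soon as $h_\varphi(x)\,\varphi(t^{-C_\varepsilon}) < e^{(\lambda_1-\varepsilon)n}$, the bound holds for every $a \neq 0$. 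Choosing $t = t^\ast := (\varphi^{-1}(e^{(\lambda_1-\varepsilon)n}/h_\varphi(x)))^{-1/C_\varepsilon}$ saturates this inequality, and combining with the previous step yields $\cal W_\gamma(\mu_n, \nu) \lesssim (t^\ast)^{\gamma/(d+1)}$.

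Finally I would separate the variables $n$ and $H := h_\varphi(x)$ to obtain the product form. Set $C_0 = \gamma/(C_\varepsilon(d+1))$, so that $\psi(n) = (\varphi^{-1}(e^{C_1 n}))^{-C_0}$, with $C_1 \in (0, \lambda_1 - \varepsilon)$ to be fixed below. When $H \leqslant e^{((\lambda_1 - \varepsilon) - C_1)n}$, one has $t^\ast \leqslant (\varphi^{-1}(e^{C_1 n}))^{-1/C_\varepsilon}$, hence $\cal W_\gamma \lesssim \psi(n)$; since $h_\varphi$ is uniformly bounded below on $\T^d$ by a positive constant, the right-hand side is also $\lesssim \psi(n) H^\delta$. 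When $H > e^{((\lambda_1 - \varepsilon) - C_1)n}$, I would use the trivial bound $\cal W_\gamma \leqslant 2$; the hypothesis $\liminf \ln \varphi(s)/\ln s \geqslant \alpha > 0$ provides $\varphi(s) \geqslant s^\alpha$ for $s$ large, so $\psi(n) \geqslant e^{-C_0 C_1 n/\alpha}$, and choosing $C_1 > 0$ small enough that $C_0 C_1/\alpha < ((\lambda_1 - \varepsilon) - C_1)\delta$ forces $\psi(n) H^\delta \gtrsim 1$ throughout this regime, so the trivial bound suffices. The delicate point is precisely this final calibration of $C_0$ and $C_1$ so that the two regimes match for a general $\varphi$; the Fourier/smoothing estimate and the application of the BFLM theorem are, by comparison, routine.
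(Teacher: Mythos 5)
Your proof follows the same overall strategy as the paper's: convert the Fourier estimate of BFLM into a Wasserstein estimate by approximating H\"older functions by band-limited ones (your Schwartz mollifier plays the role of the Jackson--Bernstein kernel the paper uses in its Lemma~\ref{lemma:interpolation} and Lemma~\ref{lemma:wasserstein_distance_torus}), apply BFLM, and then calibrate the exponents. The only structural difference is that the paper first packages BFLM into a Wasserstein statement (Proposition~\ref{proposition:BFLM_2}) and then argues by contradiction, whereas you go directly from the Fourier form and split into two regimes; these are logically interchangeable.

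There is, however, a genuine gap in your application of BFLM. The theorem's conclusion is conditional on $t \in (0,1/2]$ \emph{and} $n \geqslant -C_\varepsilon\ln t$, so the implication you write, ``as soon as $h_\varphi(x)\,\varphi(t^{-C_\varepsilon}) < e^{(\lambda_1-\varepsilon)n}$ the Fourier bound holds for every $a$,'' is only valid once these side constraints are verified for the specific $t$ used, and you never check them for $t^\ast = \bigl(\varphi^{-1}(e^{(\lambda_1-\varepsilon)n}/h_\varphi(x))\bigr)^{-1/C_\varepsilon}$. The constraint $n \geqslant -C_\varepsilon\ln t^\ast$ is not automatic: when $h_\varphi(x)$ is close to its order-one lower bound, $t^\ast \asymp \bigl(\varphi^{-1}(e^{(\lambda_1-\varepsilon)n})\bigr)^{-1/C_\varepsilon}$, and if $\varphi$ grows slowly, say $\varphi(s)\asymp s^{\alpha}$ with $\alpha < \lambda_1 - \varepsilon$, this is $\asymp e^{-(\lambda_1-\varepsilon)n/(\alpha C_\varepsilon)}$, which is strictly smaller than $e^{-n/C_\varepsilon}$ and so violates the constraint. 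The remedy is to replace $t^\ast$ by $t = \max\bigl(t^\ast, e^{-n/C_\varepsilon}\bigr)$; when the second term dominates one obtains $\cal W_\gamma \lesssim e^{-C_0 n}$, and one must then check, using the lower bound $\varphi(s)\geqslant C_3 s^{C_3}$ supplied by the hypothesis $\liminf \ln\varphi(s)/\ln s > 0$ and taking $C_1$ small enough (in particular $C_1\leqslant C_3$), that $e^{-C_0 n}\lesssim\psi(n)$. This is precisely the check the paper performs when it verifies $-C_2\ln t \leqslant n$, and it is where the growth hypothesis on $\varphi$ enters in an essential way, independently of the second-regime calibration you do describe. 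So the ``routine'' application of BFLM is, in fact, not routine without this additional verification.
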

Then, we will prove that there is a function $u_\varphi$ that dominates the function $h_\varphi$ and such that $Pu_\varphi \leqslant au_\varphi + b$ for some $a\in ]0,1[$ and $b\in \R$. This means that in average, $u_\varphi(gx)$ is much smaller than $u_\varphi(x)$ and this will allow us to prove, using the results of section~\ref{section:LLN_TCL}, the
\begin{theorem*}[\ref{theorem:TCL_tore_diophantien}]
Let $\rho$ be a borelian probability measure on $\mathrm{SL}_d(\Z)$ whose support generated a strongly irreducible and proximal group and which has an exponential moment.

Then, for any  $\gamma \in ]0,1]$ there is $\beta_0 \in \R_+^\ast$ such that for any $B \in \R_+^\ast$ and any $\beta \in ]0,\beta_0[$ we have that for any irrational point $x\in \T^d$ such that the inequality
\[
d\left(x,\frac pq \right) \leqslant e^{-Bq^\beta}
\]
has only finitely many solutions $\frac pq \in \Q^d / \Z^d$, we have that for any $\gamma-$holder continuous function $f$ on the torus, noting $\sigma^2(f)$ the quantity defined in equation~\ref{equation:variance} we have that
\[
\frac 1 {\sqrt n} \sum_{k=0}^{n-1} f(X_k) \xrightarrow{\mathcal{L}} \cal N\left(\int f\di \nu, \sigma^2(f) \right)
\]
(If $\sigma^2=0$, the law $\cal N(\mu,\sigma^2)$ is a Dirac mass at $\mu$).

Moreover, if $\sigma^2(f) \not=0$ then
\[
\liminf \frac{ \sum_{k=0}^{n-1} f(X_k) - \int f \di \nu}{\sqrt{2n\sigma^2(f) \ln\ln n}} =-1 \text{ et }\limsup \frac{ \sum_{k=0}^{n-1} f(X_k) - \int f \di \nu}{\sqrt{2n\sigma^2(f) \ln\ln n}} =1 
\]
and if $\sigma^2(f)=0$, then for $\nu-$a.e. $x\in \X$, the sequence $ (\sum_{k=0}^{n-1} f(X_k) - \int f \di \nu)_n$ is bounded in $\mathrm{L}^2(\prob_x)$.
\end{theorem*}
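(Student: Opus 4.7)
The approach is Gordin's martingale method. I would construct $g$ solving Poisson's equation $f - \int f\di\nu = g - Pg$ pointwise at $x$ (hence along the forward orbit), so that
\[
\sum_{k=0}^{n-1}\!\Bigl(f(X_k) - \!\int\! f\di\nu\Bigr) = g(X_0) - g(X_n) + M_n,\quad M_n := \sum_{k=0}^{n-1}\!\bigl(g(X_{k+1}) - Pg(X_k)\bigr),
\]
and apply corollary \ref{corollary:TCL_martingales} to $M_n$. The plan requires $g \in \eupp$ for some drift function $u$ and some $p > 2$; lemma \ref{lemma:u(X_n)/n} will then show $g(X_n)/\sqrt n \to 0$ and kill the boundary term.

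The candidate for $g$ is
\[
g(y) := \sum_{n=0}^{\infty}\Bigl(P^n f(y) - \int f\di\nu\Bigr).
\]
I apply proposition \ref{proposition:vitesse_convergence_tore} with $\varphi(q) = \exp(Bq^\beta)$: the diophantine assumption on $x$ is precisely $h_\varphi(x) < +\infty$, and for this $\varphi$ one computes $\psi(n) \asymp n^{-C_0/\beta}$. Choosing $\beta_0$ small makes the pointwise bound
\[
\bigl|P^n f(y) - {\textstyle\int} f\di\nu\bigr| \leq \|f\|_\gamma\, \mathcal W_\gamma(\rho^{*n}*\delta_y,\nu) \leq C\|f\|_\gamma\, \psi(n)\, h_\varphi(y)^{\delta}
\]
summable in $n$, so $g$ is well-defined on $\{h_\varphi < +\infty\}$ with $|g(y)| \leq C'\|f\|_\gamma h_\varphi(y)^{\delta}$ and satisfies $g - Pg = f - \int f\di\nu$ pointwise there.

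The key technical step is to produce a drift function $u_\varphi$ with $Pu_\varphi \leq a u_\varphi + b$ for some $a \in (0,1)$, finite $\nu$-a.e., dominating $h_\varphi^{\delta p}$ for a chosen $p > 2$. A natural candidate is a weighted sum over rationals
\[
u_\varphi(y) = 1 + \sum_{q \geq 1}\sum_{p \in \Z^d/q\Z^d}\varphi(q)^{-\alpha} d(y, p/q)^{-\alpha},
\]
whose sub-invariance comes from the exponential moment on $\rho$ and the positivity of the top Lyapunov exponent $\lambda_1 > 0$ of $\Gamma_\rho$ on $\R^d$: iterates of $P$ contract the weight at each fixed rational at the exponential rate $e^{-\alpha\lambda_1}$, with a bounded correction from the finitely many rationals of a given height. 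The calibration of $\alpha, \delta, p, \beta$ is what pins down $\beta_0$, and this drift construction is the principal obstacle. With $u_\varphi$ at hand, $g \in \eupp$ and $h := P(g^2) - (Pg)^2 \in \mathcal E_u^{p/2}$.

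It remains to check the hypothesis of corollary \ref{corollary:TCL_martingales}, namely $\frac1n\sum_{k=0}^{n-1} h(X_k) \to \sigma^2(f)$ both $\prob_x$-a.s.\ and in $L^1(\prob_x)$. Lemma \ref{lemma:extmesures} and $P$-stationarity of $\nu$ give $\int h\di\nu = \sigma^2(f)$, and the convergence of the empirical average combines the drift-function control of $h$ with proposition \ref{proposition:lln_martingales} applied to the martingale differences $h(X_{k+1}) - Ph(X_k)$, plus the Cesàro convergence of $P^k h(x)$ to $\int h\di\nu$ (itself a consequence of the speed of convergence and the spectral gap of $P$ on $L^2(\nu)$). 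Corollary \ref{corollary:TCL_martingales} then delivers the CLT with variance $\sigma^2(f)$ and the LIL. The degenerate case $\sigma^2(f) = 0$ follows from corollary \ref{corollary:nullite_variance}: the bound $g^2 \leq C u_\varphi$ combined with $Pu_\varphi \leq a u_\varphi + b$ yields $\sup_n P^n g^2(x) < +\infty$ on the $\nu$-full set $\{u_\varphi < +\infty\}$, so the partial sums are bounded in $L^2(\prob_x)$ for $\nu$-a.e.\ $x$.
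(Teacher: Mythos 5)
Your plan reproduces the paper's overall architecture (Gordin's method via the drift function $u_\varphi$, the explicit series for $g$, and the appeal to corollary~\ref{corollary:TCL_martingales}, lemma~\ref{lemma:u(X_n)/n} and corollary~\ref{corollary:nullite_variance}), and the drift function you call ``the principal obstacle'' is actually the easier part -- the paper disposes of it in proposition~\ref{proposition:recurrence_loin_0} and lemma~\ref{lemma:definition_u_varphi} exactly along the lines you sketch. The genuine obstacle, and the one your proposal does not surmount, is the last hypothesis of corollary~\ref{corollary:TCL_martingales}: proving
\[
\frac 1 n \sum_{k=0}^{n-1}\bigl(P(g^2)(X_k) - (Pg(X_k))^2\bigr) \longrightarrow \int g^2 - (Pg)^2\,\di\nu
\]
$\prob_x$-a.e.\ and in $\mathrm{L}^1(\prob_x)$. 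Writing $h = P(g^2) - (Pg)^2 \in \mathcal E_u^{p/2}$ and invoking ``proposition~\ref{proposition:lln_martingales} applied to $h(X_{k+1}) - Ph(X_k)$ plus Cesàro convergence of $P^k h(x)$'' does not close the loop: the identity $\frac 1 n\sum h(X_k) = \frac 1 n\sum \bigl(h(X_{k+1}) - Ph(X_k)\bigr) + \frac 1 n\sum P^k h(x)$ is false, and the correct telescoping would require a Poisson solution for $h$ itself, i.e.\ control of $\sum_n P^n h(x)$ pointwise. But $h$ is \emph{not} Hölder continuous (it is built from the unbounded function $g$), so proposition~\ref{proposition:vitesse_convergence_tore} does not give pointwise decay of $P^n h(x)$, and the $\mathrm{L}^2(\nu)$ spectral gap only gives $\nu$-a.e.\ information, which is exactly what the theorem is trying to improve upon.

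The paper handles this in lemma~\ref{lemma:convergence_variance_tore_diophantien} by a decomposition tailored to exploit the Hölder continuity of $f$ rather than the regularity of $g$ or $h$: using $Pg = g - f$ one writes
\[
P(g^2)(X_k) - (Pg(X_k))^2 = \bigl(P(g^2)(X_k) - g^2(X_k)\bigr) - f(X_k)^2 + 2 f(X_k)g(X_k),
\]
treats the first bracket by proposition~\ref{proposition:lln_martingales}, the second by the law of large numbers for the Hölder function $f^2$, and then the genuinely hard term $\frac 1 n\sum f(X_k) g(X_k)$ by truncating $g = \sum_{l<p(k)} P^l f + \sum_{l\geq p(k)} P^l f$ and applying the fourth-moment estimate of lemma~\ref{lemme:preliminaire_variance} to the Hölder functions $f\sum_{l<p(k)}P^l f$ (with $p(k)\asymp\ln\ln k$) together with lemma~\ref{lemma:produit_cesaro} for the tail. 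That fourth-moment inequality, which is the real technical engine of the proof, does not appear in your proposal at all, so the variance-convergence step is a gap rather than a different route.
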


\subsection{BFLM's result for holder-continuous functions} \sectionmark{BFLM for holder-continuous functions}

\begin{miniabstract}
In this section, we start with a few remind on Wasserstein's distance and then we state BFLM's result using this distance.\end{miniabstract}

\subsubsection{Wasserstein's distance on the torus}
Note $\X$ the torus $\T^d = \R^d/\Z^d$ endowed with the metric induced by a norm on $\R^d$.

If $\vartheta_1$ and $\vartheta_2$ are borelian probability measures on $\X$, a way to measure their distance is to compute the total variation
\[
d_{\mathrm{var}}(\vartheta_1, \vartheta_2) = \sup_{\substack{f\in \cal C^0(\X) \\ \|f\|_\infty\leqslant 1}} \left|\int f\di\vartheta_1 - \int f\di \vartheta_2 \right|
\]
This distance is not adapted to our study since, for instance, when $\rho$ has a finite support, so does the measure $\rho^{\ast n} \ast \delta_x$ and so, for any $x\in \X$ and any $n\in \N$,
\[
d_{\mathrm{var}}(\rho^{\ast n} \ast \delta_x, \nu) = 2
\]

However, we can compute the distance between $\vartheta_1$ and $\vartheta_2$ seen has linear forms on the space $\cal C^{0,\gamma}(\X)$ of $\gamma-$holder continuous functions on $\X$. Therefore, we make the following

\begin{definition}[Wasserstein's distance]~

Let $\vartheta_1,\vartheta_2$ be two borelian probability measures on a compact metric space $(\X,d)$.

For any $\gamma\in]0,1]$, we define the $\gamma-$distance of Wasserstein between $\vartheta_1$ and $\vartheta_2$ by
\[
\cal W_\gamma (\vartheta_1,\vartheta_2) = \sup_{f\in\cal C^{0,\gamma}(\X)\;\|f\|_\gamma\leqslant 1} \left|\int f\di\vartheta_1-\int f \di\vartheta_2 \right|
\] 
\end{definition}

\begin{remark}
Sometimes, this distance is also named after Kantorovich and Rubinstein and we refer to~\cite{Vil09} for an overview of it's first properties.
\end{remark}

On the torus, Wasserstein's distance between a given measure $\vartheta$ and Lebesgue's measure is linked to the decreasing of the Fourier coefficients of $\vartheta$. We make this precise in next
\begin{lemma}\label{lemma:wasserstein_distance_torus}
For any $\gamma \in ]0,1]$, there is a constant $C$ depending only on $d$ and $\gamma$ such that for any borelian probability measure $\vartheta$ on the torus $\T^d$ and any $t\in\R_+^\ast$, if $\cal W_\gamma(\vartheta,\nu)  > t$ then there is $a\in \Z^d\setminus\{0\}$ such that $|\widehat{\vartheta}(a)|\geqslant Ct^{C}\|a\|$ where we noted $\nu$ the Lebesgue measure on $\T^d$.
\end{lemma}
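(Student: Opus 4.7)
\emph{Proof plan.} By definition of the Wasserstein distance, there exists $f \in \cal C^{0,\gamma}(\T^d)$ with $\|f\|_\gamma \leq 1$ and $\left| \int f\di\vartheta - \int f\di\nu \right| > t$. The plan is to approximate $f$ uniformly by a trigonometric polynomial of controlled degree, and then use Fourier orthogonality with respect to $\nu$ together with a pigeonhole bound on the Fourier modes of $\vartheta$. For the approximation step, I would convolve $f$ with a $d$-dimensional Fejér-type kernel $K_N$: a non-negative trigonometric polynomial with $\widehat{K_N}(0) = 1$, $|\widehat{K_N}(a)| \leq 1$ for every $a$, and $\widehat{K_N}(a) = 0$ whenever $\|a\| > N$. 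Splitting the convolution integral $f(x) - f*K_N(x) = \int K_N(y)(f(x) - f(x-y))\di\nu(y)$ into the regions $\|y\| \leq 1/N$ and $\|y\| > 1/N$ and combining the Hölder bound on $f$ with the standard concentration of $K_N$ near the origin yields $\|f - f*K_N\|_\infty \leq C_1 \|f\|_\gamma N^{-\gamma}$, with $C_1$ depending only on $d$ and $\gamma$.

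Choosing $N = \lceil(4C_1/t)^{1/\gamma}\rceil$ makes the smoothing error at most $t/4$, and therefore
\[
\left| \int f*K_N \di(\vartheta - \nu) \right| \geq t - 2\|f - f*K_N\|_\infty \geq \frac{t}{2}.
\]
Expanding $f*K_N$ in Fourier series and using $\widehat{\nu}(a) = 0$ for $a \neq 0$ and $\widehat{\nu}(0) = \widehat{\vartheta}(0) = 1$ to cancel the zero mode, this integral equals
\[
\sum_{\substack{a \in \Z^d \setminus \{0\} \\ \|a\| \leq N}} c_a(f)\, \widehat{K_N}(a)\, \widehat{\vartheta}(-a),
\]
whose absolute value is thus at least $t/2$. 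Using $|c_a(f)| \leq \|f\|_\infty \leq 1$, $|\widehat{K_N}(a)| \leq 1$, and the fact that the number of summands is bounded by $C_2 N^d$, a pigeonhole argument produces some $a \in \Z^d \setminus \{0\}$ with $\|a\| \leq N$ satisfying $|\widehat{\vartheta}(a)| \geq t/(2C_2 N^d)$. Plugging in $N \leq C_3 t^{-1/\gamma}$ gives on one hand $|\widehat{\vartheta}(a)| \geq C_4 t^{1 + d/\gamma}$ and on the other $\|a\| \leq C_5 t^{-1/\gamma}$, hence $|\widehat{\vartheta}(a)| \geq C_6 \, t^{1 + (d+1)/\gamma}\|a\|$, which is the desired form $|\widehat{\vartheta}(a)| \geq Ct^C\|a\|$ for a suitable constant $C = C(d,\gamma)$ (the remaining range of $t$ close to $1$ being trivial since then $\mathcal W_\gamma(\vartheta,\nu)$ is bounded away from $0$).

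The only non-trivial ingredient is the uniform approximation estimate $\|f - f*K_N\|_\infty \leq C(d,\gamma)\|f\|_\gamma N^{-\gamma}$ for $\gamma$-Hölder functions on $\T^d$; this is essentially Jackson's theorem in the torus setting, and the main care is to ensure that the implicit constant depends only on $d$ and $\gamma$, not on the number of Fourier modes. Everything else is routine Fourier-analytic bookkeeping together with a one-line pigeonhole.
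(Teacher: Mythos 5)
Your argument follows essentially the same route as the paper's: pick a near-extremal $f$ with $\|f\|_\gamma\leqslant 1$, uniformly approximate it by a trigonometric polynomial via convolution with a localized kernel, and use Fourier orthogonality against $\nu$ to isolate a large Fourier coefficient of $\vartheta$. The difference is in the last step: the paper (Lemma~\ref{lemma:interpolation}) controls the Sobolev norm $\|f_n\|_{\cal H^r}$ and pairs it against the summable weight $\|a\|(1+\|a\|^2)^{-r/2}$, while you simply count the $O(N^d)$ nonzero Fourier modes of the trigonometric polynomial and pigeonhole; both give $|\widehat\vartheta(a)|\geqslant Ct^C\|a\|$ after adjusting $C$, and your bookkeeping is arguably leaner. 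One point worth pinning down: the plain Fejér kernel gives only $\|f-f*K_N\|_\infty \leqslant C(\ln N)/N$ when $\gamma=1$, because $\int |y|F_N(y)\,\di y\asymp(\ln N)/N$, so ``Fejér-type'' is not quite enough to get the clean $N^{-\gamma}$ rate at the endpoint. The paper avoids this by taking the Jackson kernel $k_m(y)=(\sin(2\pi m y)/\sin(\pi y))^4$, whose faster decay yields $N^{-\gamma}$ uniformly for $\gamma\in]0,1]$. Your proof would still close with the Fejér rate, since the extra $\ln(1/t)$ loss can be absorbed into a slightly larger exponent $C$ in $Ct^C$; but as written, your phrase ``standard concentration of $K_N$ near the origin yields $\|f-f*K_N\|_\infty\leqslant C_1\|f\|_\gamma N^{-\gamma}$'' implicitly requires a Jackson-type kernel once $\gamma=1$ is allowed.
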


To prove this lemma, we will need a result of Jackson and Bernstein about the rate at which one can approximate in the uniform norm an holder continuous function by more regular ones.

For $r\in \N^\ast$, we define the Sobolev space
\[
\cal H^r := \left\{ f\in \mathrm{L}^2(\T^d) \middle| \sum_{a\in \Z^d} |\widehat f(a)|^2 (1+\|a\|)^{2r} <+\infty \right\}
\]

\begin{lemma}[Jackson, Bernstein]\label{lemma:interpolation}
Let $\gamma\in]0,1]$ et $r\in[1,+\infty[$.

Then, there is some $C\in\R$ such that for any function $f\in\cal C^{0,\gamma}(\T^d)$, there is a sequence $(f_n)\in\cal H^r(\T^d)^\N$ such that for any $n\in\N^\ast$,
\[
\int f\di \nu=\int f_n\di \nu,\;\;
\|f-f_n\|_\infty \leqslant \frac{C}{n^\gamma} \|f\|_\gamma\textnormal{ and }\|f_n\|_{\cal H^r} \leqslant C\|f\|_\infty n^{C}
\]
\end{lemma}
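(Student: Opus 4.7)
The plan is to construct $f_n$ as the convolution of $f$ with a smooth compactly supported mollifier at scale $1/n$. Fix once and for all a nonnegative function $\chi \in \cal C_c^\infty(\R^d)$ supported in the unit ball with $\int_{\R^d} \chi(y) dy = 1$; since $\chi$ is Schwartz, so is its Fourier transform, and for every integer $N$ we have $|\widehat{\chi}(\xi)| \leqslant C_N(1+\|\xi\|)^{-N}$. For $n$ sufficiently large that the ball of radius $1/n$ embeds in a fundamental domain of $\T^d$, set $\chi_n(y) = n^d \chi(ny)$, view it as a smooth function on $\T^d$ with $\int_{\T^d} \chi_n \di\nu = 1$, and define
\[
f_n(x) = \int_{\T^d} f(x-y) \chi_n(y) \di\nu(y).
\]
For the finitely many small values of $n$ not covered by this, we can simply take $f_n \equiv \int f \di\nu$; by adjusting the constant $C$ these finitely many cases are absorbed.

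The identity $\int f_n \di\nu = \int f \di\nu$ is immediate from Fubini's theorem. For the uniform estimate, since $\chi_n$ is nonnegative and supported where $\|y\| \leqslant 1/n$, and since $f$ is $\gamma$-Hölder,
\[
|f(x) - f_n(x)| \leqslant \int_{\T^d} |f(x) - f(x-y)| \chi_n(y) \di\nu(y) \leqslant \|f\|_\gamma\, n^{-\gamma},
\]
which gives the second bound. For the Sobolev norm, the Fourier coefficients factor as $\widehat{f_n}(a) = \widehat{f}(a)\,\widehat{\chi}(a/n)$ (after a standard computation based on the scaling $\chi_n(y) = n^d \chi(ny)$), and using $|\widehat{f}(a)| \leqslant \|f\|_\infty$ we get
\[
\|f_n\|_{\cal H^r}^2 \leqslant \|f\|_\infty^2 \sum_{a \in \Z^d} |\widehat{\chi}(a/n)|^2 (1+\|a\|)^{2r}.
\]
Splitting the sum at $\|a\| = n$: the portion with $\|a\| \leqslant n$ has $O(n^d)$ terms each of size $O(n^{2r})$, contributing $O(n^{2r+d})$; for the portion $\|a\| > n$ we apply the Schwartz decay with $N > r + d/2$, which makes that tail $\leqslant C n^{2N} \sum_{\|a\| > n} \|a\|^{2r-2N} = O(n^{2r+d})$ as well. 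This yields $\|f_n\|_{\cal H^r} \leqslant C \|f\|_\infty n^{r+d/2}$, proving the third bound.

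There is no genuine obstacle in this argument; the only point requiring care is the Fourier-side estimate, where the rapid decay of $\widehat{\chi}$ has to be traded against the polynomial weight $(1+\|a\|)^{2r}$ via the standard splitting above. All constants can be taken to depend only on $\gamma$, $r$, and $d$, so a single $C$ works uniformly in $f$ as the statement requires.
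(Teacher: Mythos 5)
Your proof is correct, but it takes a genuinely different route from the paper's. The paper convolves $f$ with the Jackson kernel $K_m(y)=\prod_i \bigl(\tfrac{\sin(2\pi m y_i)}{\sin(\pi y_i)}\bigr)^4$, which is a trigonometric polynomial of degree $O(m^4)$; this makes $f_m$ itself a trigonometric polynomial, so the $\cal H^r$ bound reduces to counting the (finitely many) nonzero Fourier coefficients, with no tail to estimate. The uniform bound $\|f-f_m\|_\infty \lesssim m^{-\gamma}\|f\|_\gamma$ is then obtained by an explicit moment computation $\int y^\gamma K_m \asymp m^{-\gamma}$. You instead mollify with a scaled smooth bump $\chi_n=n^d\chi(n\,\cdot\,)$ on $\R^d$ transplanted to the torus; the Hölder bound is immediate from the support of $\chi_n$, but since $\widehat{\chi}$ only decays rapidly (it does not vanish), the $\cal H^r$ bound requires the splitting at $\|a\|=n$ and the Schwartz-decay estimate, which you execute correctly with $N>r+d/2$. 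Both approaches give the same qualitative conclusion. The paper's trigonometric-polynomial route makes the Sobolev estimate trivial at the cost of a concrete kernel computation, while your mollifier route makes the Hölder estimate trivial at the cost of a standard but nontrivial tail estimate; since the exponent in $n^C$ is not optimized in the lemma, the choice is a matter of taste.

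Two minor points worth noting in your write-up: you implicitly use the fact that the periodization of $\chi_n$ has Fourier coefficients $\widehat{\chi_n}(a)=\widehat{\chi}(a/n)$ (the $\R^d$ Fourier transform evaluated at lattice points), which does hold here because $\chi_n$ is supported in a fundamental domain for $n$ large, but it deserves a sentence; and your handling of the finitely many small $n$ by taking $f_n\equiv\int f\,\di\nu$ silently uses that $\|f-\int f\,\di\nu\|_\infty\leqslant (\operatorname{diam}\T^d)^\gamma\|f\|_\gamma$, which is fine but could be stated.
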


\begin{proof}
For $y\in\R/\Z$, we note $k_m(y)=\left(\frac{\sin(2\pi my)}{ \sin(\pi y)}\right)^4$ and for a point $y=(y_1,\dots,y_d)\in\T^d$, we note $K_m(y)=\prod_{i=1}^d k_m(y_i)$. Finally, we note $I_m=\left(\int_{-1/4}^{1/4} k_m(y)\di y\right)^{-1}$.

Define, for $x\in \T^d$,
\[
f_m(x)=\int_{[-1/4,1/4]^d} I_m^d f(x+2y)K_m(y)\di y=\frac{I_m^d}{2} \int_{[-1/2,1/2]^d} f(y) K_m(\frac{y-x} 2)\di y
\]

Then, we can compute
\begin{flalign*}
|f(x)-&f_m(x)| = \left|\int_{[-1/4,1/4]^d} I_m^d (f(x)-f(x+2y))K_m(y)\di y \right|\\
&\leqslant I_m^d 2^\gamma\|f\|_{\gamma} \int_{[-1/4,1/4]^d}  \|y\|^\gamma K_m(y)\di y\leqslant I_m^d 2^{1+\gamma}\|f\|_{\gamma} \int_{[0,1/4]^d}  \|y\|^\gamma K_m(y)\di y \\
&\leqslant I_m^d 2^{1+\gamma}\|f\|_{\gamma} \int_{[0,1/4]^d} ( y_1^\gamma+\dots+y_d^\gamma) K_m(y)\di y \\ &\leqslant dI_m 2^{1+\gamma}\|f\|_\gamma \int_{[0,1/4]} y^\gamma k_m(y)\di y
\end{flalign*}
Where we used in last inequality the fact that
\[
I_m^d \int_{[0,1/4]^d} y_1^\gamma K_m(y) \di y = I_m \int_{[0,1/4]} y^\gamma k_m(y) \di y
\]
Note now,
\[
J_{m,\gamma} := 2\int_{0}^{1/4} y^\gamma k_m(y) \di y = 2\int_{0}^{1/4} y^{\gamma} \left(\frac{\sin (2\pi m y)}{\sin (\pi y)}
\right)^4 \di y\]
Then, using that for any $t \in [0,\pi /2]$, $\frac{2t}{\pi } \leqslant \sin(t) \leqslant t$, we get that
\[
\frac 1 {\pi ^4} \int_{0}^{\pi/4} y^{\gamma - 4}\left( \sin(2\pi m y)  \right)^4 \di y\leqslant J_{m,\gamma} \leqslant \frac 1 {2^4} \int_{0}^{\pi/4} y^{\gamma-4} \left( \sin(2\pi m y)  \right)^4 \di y := \frac 1 {2^4} L_{m,\gamma}
\] 
Moreover,
\[
L_{m,\gamma} = \int_{0}^{m\pi/2} \left(\frac y{2\pi m}\right)^{\gamma-4} \left(\sin y\right)^4 \frac{\di y}{2\pi m} = (2\pi m)^{3-\gamma} \int_0^{m\pi /2} y^{\gamma-4}\left( \sin y\right)^4 \di y
\]
And so,
\[
L_{m,\gamma} \asymp m^{3-\gamma} 
\]
Thus,
\[
J_{m,\gamma} \asymp m^{3-\gamma}
\]
and finally,
\[
I_m \int_{0}^{1/4} y^{\gamma} k_m(y) \di y =\frac{J_{m,\gamma}}{J_{m,0}} \asymp m^{-\gamma}
\]
And so, what we proved is that there is some constant $C$ such that for any function $f\in \cal C^{0,\gamma}(\T^d)$, we have that
\[
\|f-f_m\|_{\infty} \leqslant \frac{C}{m^\gamma} \|f\|_\gamma
\]

So, what is left is to prove that (for some maybe bigger constant $C$)
\[
\|f_m\|_{\cal H^r} \leqslant C \|f\|_\infty m^{rd}
\]
But, it is clear that for any $a \in \Z^d$,
\[
|\widehat f_m(a)|\leqslant \|f\|_\infty
\]
And, using that $f_m = f\ast K_m$ and that $K_m$ is a trigonometric polynomial of degree at most $Cm^4$ for some  $C$ as we may see by developping
\begin{flalign*}
k_m (y) &= \left(\frac{\sin(2\pi m y)}{\sin (\pi y)} \right)^4 = \left( \frac{e^{-2i\pi m y} - e^{2i\pi my}}{e^{-i\pi y} - e^{i\pi y}}\right)^4 = e^{4i\pi y} \left(\frac{e^{-2i\pi my }- e^{2i\pi m y}}{1 - e^{2i\pi y}} \right)^4 \\
&= e^{4 i\pi y}\left( \sum_{k=-m}^{m-1} e^{2i\pi k y}\right)^4
\end{flalign*}
So, we have that for $\|a\| > Cm^4$, $\widehat K_m(a) = 0$.

And this proves that
\begin{flalign*}
\|f_m\|_{\cal H^r} &= \left( \sum_{a\in \Z^d} (1+\|a\|)^{2r} |\widehat f_m(a)|^2 \right)^{1/2}\leqslant \left(\sum_{\|a\|\leqslant Cm^4} (1+\|a\|)^{2r}\right)^{1/2} \|f\|_\infty \\
& \leqslant (1+ Cm^4)^r (Cm^4 )^{d/2}\|f\|_\infty
\end{flalign*}
Which finishes the proof of the lemma.
\end{proof}

\begin{proof}[Proof of lemma~\ref{lemma:wasserstein_distance_torus}]
By definition of $\cal W_\gamma (\vartheta,\nu)$, there is a function $f\in\cal C^{0,\gamma}(\T^d)$ such that $\|f\|_\gamma\leqslant 1$ and $|\int f\di\vartheta -\int f\di \nu|\geqslant \frac t2$.

\medskip
Let $r\in \N^\ast$ such that $\sum_{a\in\Z^d\setminus\{0\}} \frac{\|a\|}{(1+\|a\|^2)^{r/2}}=:C_r$ is finite.

According to lemma~\ref{lemma:interpolation}, there is a sequence of functions $(f_n)\in\cal H^r(\T^d)^\N$ such that $\|f-f_n\|_\infty \leqslant \frac{C}{n^\gamma}$ and $\|f_n\|_{\cal H^r} \leqslant C n^{C}$.

Then,
\begin{flalign*}
\left|\int f_n\di\vartheta -\int f_n \di m\right| & \geqslant \left|\int f\di\vartheta -\int f \di m\right| - \left|\int (f-f_n)\di\vartheta -\int (f-f_n) \di m \right| \\
& \geqslant \frac t 2 - 2\|f-f_n\|_\infty \geqslant \frac t2-\frac {2C}{n^\gamma}
\end{flalign*}
But,
\begin{flalign*}
\left|\int f_n\di\vartheta -\int f_n \di m\right| & = \left| \sum_{a\in\Z^d \setminus\{0\}} \widehat{f_n}(a) \widehat{\vartheta}(a) \right| \leqslant \sum_{a\in\Z^d \setminus\{0\}} \left| \widehat{f_n}(a) \widehat{\vartheta}(a) \right| \\
&\leqslant \sum_{a\in\Z^d\setminus\{0\}} \frac{\|f_n\|_{\cal H^r}}{(1+\|a\|^2)^{r/2}} |\widehat{\vartheta}(a)| \leqslant \|f_n\|_{\cal H^r} C_r \sup_{a\in \Z^d \setminus\{0\}} \frac{|\widehat \vartheta (a)|}{\|a\|} \\
&\leqslant Cn^C C_r \sup_{a\in \Z^d \setminus\{0\}} \frac{|\widehat \vartheta (a)|}{\|a\|}
\end{flalign*}
and so,
\[
\sup_{a\in \Z^d \setminus\{0\}} \frac{|\widehat \vartheta(a)|}{\|a\|} \geqslant \frac{\frac t2-\frac{2C}{n^\gamma} }{C C_r n^C}
\]

So, taking $n=\lfloor \left(\frac{8C}t\right)^{1/\gamma}  \rfloor +1$ we have that $\frac t 2-2\frac{C_1}{n^\gamma} \geqslant t/4$ and there is some constant $C'$ such that
\[
\sup_{a\in \Z^d\setminus\{0\}} \frac{|\widehat \vartheta(a)|}{\|a\|} \geqslant C' t^{1 + C/\gamma} 
\]
and this finishes the proof.
\end{proof}

With lemma~\ref{lemma:wasserstein_distance_torus}, we get a straightforward corollary of theorem~\ref{theorem:BFLM}.
\begin{proposition}[\cite{BFLM11} with Wasserstein's distance] \label{proposition:BFLM_2}
Let $\rho$ be a borelian probability measure on $\mathrm{SL}_d(\Z)$ whose support generated a strongly irreducible and proximal group and which has an exponential moment.

Then, for any $\varepsilon \in \R_+^\ast$ and any $\gamma \in ]0,1]$, there is a constant $C\in \R_+$ and $t_0 \in ]0,1/2]$ such that for any $n\in \N$, any $t\in ]0,t_0]$ with $n\geqslant - C\ln t$ and any $x\in \T^d$, if
\[
\cal W_\gamma(\rho^{\ast n}\ast \delta_x,\nu) \geqslant t
\]
then there is $p/q\in \Q^d/\Z^d$ with $|q| \leqslant Ct^{-C}$ and
\[
d(x,p/q) \leqslant e^{-(\lambda_1-\varepsilon)n}
\]
\end{proposition}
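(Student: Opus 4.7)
The plan is to combine Lemma~\ref{lemma:wasserstein_distance_torus} with Theorem~\ref{theorem:BFLM} and then do some bookkeeping on the constants.

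Fix $\varepsilon \in \R_+^\ast$ and $\gamma\in]0,1]$, and let $C_0$ be the constant produced by Lemma~\ref{lemma:wasserstein_distance_torus} for this $\gamma$ (so that $\cal W_\gamma(\vartheta,\nu)>t$ forces the existence of some $a\in\Z^d\setminus\{0\}$ with $|\widehat{\vartheta}(a)|\geqslant C_0 t^{C_0}\|a\|$). Let $C_1$ be the constant produced by Theorem~\ref{theorem:BFLM} for the chosen $\varepsilon$. Set $\vartheta = \rho^{\ast n}\ast \delta_x$ and assume $\cal W_\gamma(\vartheta,\nu)\geqslant t$. Applying Lemma~\ref{lemma:wasserstein_distance_torus} yields some $a\in\Z^d\setminus\{0\}$ with $|\widehat{\vartheta}(a)|\geqslant C_0 t^{C_0}\|a\|$, which we rewrite as $|\widehat{\vartheta}(a)|\geqslant 2s\|a\|$ with $s=\frac12 C_0 t^{C_0}$.

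Now I want to apply Theorem~\ref{theorem:BFLM} with parameter $s$. For this I need $s\in]0,1/2]$ and $n\geqslant -C_1\ln s$. The first condition amounts to $t^{C_0}\leqslant 1/C_0$, which defines a threshold $t_0\in]0,1/2]$ depending only on $C_0$ (hence only on $\gamma$ and $d$). The second condition reads
\[
n \;\geqslant\; -C_1 \ln\!\left(\tfrac12 C_0 t^{C_0}\right) \;=\; C_1 C_0 \ln(1/t) + C_1 \ln\!\tfrac{2}{C_0},
\]
so for $t\leqslant t_0$ it is implied by $n\geqslant -C_2\ln t$ for some $C_2$ depending only on $C_0,C_1$. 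Thus Theorem~\ref{theorem:BFLM} supplies a rational $p/q\in\Q^d/\Z^d$ with
\[
d(x,p/q)\leqslant e^{-(\lambda_1-\varepsilon)n} \quad\text{and}\quad |q|\leqslant s^{-C_1} \;=\; \bigl(\tfrac12 C_0\bigr)^{-C_1} t^{-C_0 C_1}.
\]

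Taking a single constant $C$ larger than $C_2$, than $C_0 C_1$, and than $(C_0/2)^{-C_1}$ absorbs the three estimates into the form stated in the proposition, and shrinking $t_0$ if necessary ensures $t_0\in]0,1/2]$. The only step with any content is the first one (Lemma~\ref{lemma:wasserstein_distance_torus}); everything else is just unwinding the definition of $s$ and relabelling constants, so no substantial obstacle is expected.
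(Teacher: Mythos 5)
Your proof is correct and takes exactly the route the paper intends: the paper gives no explicit argument for Proposition~\ref{proposition:BFLM_2}, merely asserting it is a ``straightforward corollary'' of Theorem~\ref{theorem:BFLM} via Lemma~\ref{lemma:wasserstein_distance_torus}, and your write-up supplies precisely those bookkeeping steps. The only cosmetic blemish is the strict-versus-nonstrict inequalities (the proposition's hypothesis is $\mathcal W_\gamma\geqslant t$, Lemma~\ref{lemma:wasserstein_distance_torus} is stated with $>t$, and Theorem~\ref{theorem:BFLM} requires $|\widehat\vartheta(a)|>2s\|a\|$ strictly while the lemma only yields $\geqslant$); taking $s=\tfrac13 C_0 t^{C_0}$, say, and applying the lemma at $t/2$ removes this without changing anything else.
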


The previous proposition proves that if the distance between $\rho^{\ast n} \ast \delta_x$ and $\nu$ is large and if $t$ is a function of $n$, then $x$ is well approximated by rational points : for instance, if $t=e^{-\alpha n}$ for some $\alpha \in \R_+^\ast$ then the $p/q$ produced by the proposition satisfies $q\leqslant C e^{\alpha C n}$ and so,
\[
d(x,p/q) \leqslant e^{-(\lambda_1-\varepsilon) n} \leqslant \left( \frac C q \right)^{(\lambda_1 - \varepsilon)/\alpha C}
\]
We are going to reverse this to, given a diophantine condition, find a rate of convergence.

\medskip
From now on, we fix a strictly non-decreasing function $\varphi : \R_+ \to \R_+^\ast$.

For $x\in \X$, we note
\begin{equation} \label{equation:hauteur_phi}
h_\varphi(x) = \sup_{p/q\in \Q^d/\Z^d} \frac{ 1 }{\varphi(q) d(x,p/q)}
\end{equation} 

Thus, a point is $M-$diophantine if $h_\varphi(x)$ is finite with $\varphi(t) = t^M$. We also remark that if $\varphi$ grows faster than any polynomial, then $\nu(h_\varphi<+\infty)=1$. 

\begin{proposition} \label{proposition:vitesse_convergence_tore}
Let $\rho$ be a borelian probability measure on $\mathrm{SL}_d(\Z)$ whose support generated a strongly irreducible and proximal group and which has an exponential moment.

Then, for any $\gamma, \delta\in ]0,1]$ and any strictly non-decreasing function $\varphi : \R_+ \to \R_+^\ast$ with
\[
\liminf \frac{ \ln \varphi(s)}{\ln s} >0
\]
there are constants $C,C_0,C_1\in \R_+^\ast$ such that for any $x\in \T^d$ and any $n\in \N$,
\[
\cal W_\gamma (\rho^{\ast n}\ast \delta_x , \nu) \leqslant C\psi(n) h_\varphi(x)^\delta
\]
where $h_\varphi$ is the function defined in equation~\ref{equation:hauteur_phi} and $\psi$ is the function defined by
\[
\psi(t) = \left(\varphi^{-1} (e^{C_1t}) \right)^{-C_0}
\]
\end{proposition}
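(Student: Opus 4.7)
The plan is to apply Proposition~\ref{proposition:BFLM_2} in its contrapositive form and then convert the resulting diophantine conclusion into an estimate in terms of $h_\varphi$.

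Setting $t := \cal W_\gamma(\rho^{\ast n}\ast \delta_x, \nu)$, I would apply Proposition~\ref{proposition:BFLM_2} with, say, $\varepsilon = \lambda_1/2$; this provides a constant $A>0$ and a threshold $t_0 \in\,]0,1/2]$ such that whenever $t \leqslant t_0$ and $n \geqslant -A \ln t$, one finds $p/q \in \Q^d/\Z^d$ with $q \leqslant A t^{-A}$ and $d(x,p/q) \leqslant e^{-\eta n}$, where $\eta := \lambda_1/2$. Combining the defining inequality $h_\varphi(x) \geqslant 1/(\varphi(q) d(x,p/q))$ with the monotonicity of $\varphi$ then gives the basic estimate
\[
\varphi(At^{-A})\cdot h_\varphi(x) \;\geqslant\; \varphi(q)\cdot \frac{1}{d(x,p/q)} \;\geqslant\; e^{\eta n}.
\]

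Next I would fix a small auxiliary parameter $\delta' \in\,]0,1[$, to be chosen below, and split according to the size of $h_\varphi(x)$. If $h_\varphi(x) \leqslant e^{\eta(1-\delta') n}$, the previous inequality forces $\varphi(At^{-A}) \geqslant e^{\eta \delta' n}$; applying $\varphi^{-1}$ and solving for $t$ produces
\[
t \;\leqslant\; A^{1/A}\bigl(\varphi^{-1}(e^{\eta\delta' n})\bigr)^{-1/A},
\]
which is already of the required form $C\psi(n)$ with $C_1 := \eta\delta'$ and $C_0 := 1/A$. The coarse approximation $q=1$ (using a nearest integer representative) yields a uniform lower bound $h_\varphi(x) \geqslant c > 0$ on $\T^d$, so one may freely insert the factor $h_\varphi(x)^\delta$ at the cost of an absolute multiplicative constant.

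In the complementary case $h_\varphi(x) > e^{\eta(1-\delta')n}$, I would instead rely on the trivial bound $\cal W_\gamma \leqslant \mathrm{diam}(\T^d)$. The hypothesis $\liminf (\ln \varphi(s)/\ln s) > 0$ gives $\alpha > 0$ with $\varphi(s) \geqslant s^\alpha$ for $s$ large, hence $\varphi^{-1}(y) \leqslant y^{1/\alpha}$ for large $y$, and therefore $\psi(n) \geqslant e^{-C_0 C_1 n/\alpha}$ for $n$ large. Together with $h_\varphi(x)^\delta > e^{\eta\delta(1-\delta')n}$ this gives
\[
\psi(n)\,h_\varphi(x)^\delta \;\geqslant\; e^{\eta n\bigl(\delta(1-\delta')-\delta'/(A\alpha)\bigr)},
\]
and choosing $\delta'$ small enough to make the bracketed exponent positive forces the right-hand side to grow without bound, so that the trivial bound gives $\cal W_\gamma \leqslant C\psi(n)h_\varphi(x)^\delta$ for sufficiently large $C$. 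The residual configurations ($t>t_0$, $n<-A\ln t$, or a bounded initial range of $n$) are covered by enlarging $C$ one final time. The only delicate step is the exponent bookkeeping in the choice of $\delta'$; once this is fixed, the proof is a direct translation of Proposition~\ref{proposition:BFLM_2}.
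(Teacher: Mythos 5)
Your argument uses the same key ingredients as the paper's proof --- Proposition~\ref{proposition:BFLM_2}, the basic estimate $\varphi(q)\,h_\varphi(x)\geqslant e^{\eta n}$ extracted from the rational approximation it supplies, and the growth hypothesis on $\varphi$ --- but organizes them differently. The paper defines $t:=\frac{C}{5}\psi(n)h_\varphi(x)^\delta$ a priori (the target bound up to a constant), which makes the constraint $n\geqslant -C_2\ln t$ hold automatically from the structure of $\psi$ together with $h_\varphi\geqslant 1$, and then derives a contradiction between two competing estimates on $\psi(n)$. You instead set $t:=\cal W_\gamma(\rho^{\ast n}\ast\delta_x,\nu)$ and solve forward, splitting on the size of $h_\varphi(x)$ relative to $e^{\eta(1-\delta')n}$. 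Both routes work; yours is arguably more transparent, but it puts the burden of verifying the hypotheses of Proposition~\ref{proposition:BFLM_2} on you rather than having them come for free from the choice of~$t$.

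That burden is exactly where the write-up is incomplete: the residual configuration $t>t_0$ is \emph{not} in fact handled by ``enlarging $C$.'' Since $\psi(n)\to 0$ while $h_\varphi$ is only bounded below by a fixed positive constant, no single enlargement of $C$ yields $C\psi(n)h_\varphi(x)^\delta\geqslant t_0$ uniformly in $n$ inside Case~1. What rescues you is that this configuration cannot occur for $n$ large: since $\cal W_\gamma\geqslant t_0$, Proposition~\ref{proposition:BFLM_2} applied with the fixed value $t_0$ (for $n\geqslant -A\ln t_0$) already forces $h_\varphi(x)\geqslant e^{\eta n}/\varphi(At_0^{-A})$, and for $n$ large this exceeds $e^{\eta(1-\delta')n}$, placing you in Case~2 after all. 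Only once this is observed does ``$t>t_0$ inside Case~1'' reduce to a bounded range of $n$, where enlarging $C$ genuinely finishes. A similar, easier point is needed for $n<-A\ln t$: one must additionally impose $\delta'\leqslant\alpha/\eta$ so that the forced bound $t<e^{-n/A}$ is absorbed by $\psi(n)\geqslant e^{-C_0C_1 n/\alpha}$; this is compatible with, but separate from, the constraint you impose to make the Case~2 exponent positive. These gaps are all fillable, but the residual analysis as stated does not actually close the argument.
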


\begin{remark}
The assumption on $\varphi$ implies that for some $c\in \R_+^\ast$ we have that for any $t\in \R$, $\varphi(t) \geqslant c t^c$. It is not restrictive at all since according to Dirichlet's theorem on diophantine approximation, if $\varphi(t) = o(t^{1+1/d})$, then the function $h_\varphi$ only takes infinite values.
\end{remark}

\begin{remark}
If we take $\varphi(n) = n^D$, then we get $\psi(n) = e^{-\kappa n}$ for some $\kappa \in \R_+^\ast$ and this proves that for a generic diophantine point, the convergence is at exponential speed.

\medskip
In the sequel, we will have to be sure that the sum of the $\psi(n)$ converges and so, we will take $\psi(n) = n^{-1-\alpha} $ for some $\alpha\in \R_+^\ast$. This will allow us to study irrational points $x\in \T^d$ such that the inequality
\[
d\left( x, \frac p q\right) \leqslant e^{-B q ^\beta}
\]
has only finitely many solutions $\frac pq\in \Q^d/\Z^d$ where $\beta,B$ will be constants depending on $\rho$.
\end{remark}

\begin{proof}
Let $C_0,C_1,C \in [5,+\infty[$ whose values will be determined later.

We note $C_2$ the constant given by proposition~\ref{proposition:BFLM_2}.

Let $x\in \X$ et $n\in \N$.

If $C\psi(n)h_\varphi(x)^\delta \geqslant 2$, then the inequality is satisfied since $\|P^{n}f-\int f\di m\|_\infty \leqslant 2\|f\|_\infty$.

Thus, we shall assume that $C\psi(n)h_\varphi(x)^\delta \leqslant 2$.

Let $t=\frac {C} 5 \psi(n)h_\varphi(x)^\delta$. Then, $t<\frac 1 2$ and
\[
-C_2\ln t = -C_2 \ln \left(\frac {C} 5 \psi(n)h_\varphi(x)^\delta \right) \leqslant -C_2 \ln \left(\psi(n) \right) = C_2 C_0\ln \varphi^{-1}(e^{C_1 n})
\]
since $C h_\varphi(x) / 5 \geqslant 1$ because $C\geqslant 5$ and $h_\varphi(x)\geqslant 1$.

But, there is a constant $C_3$ such that for any $s\in \R_+$, $\varphi(s) \geqslant C_3 s^{C_3}$ and so,
\[
\varphi^{-1}(s) \leqslant \left(\frac{ s} {C_3}\right)^{1/C_3}
\]
Therefore, $\ln \varphi^{-1}(e^{C_1n}) \leqslant \frac 1{C_3} (C_1 n-\ln C_3)$ and $-C\ln t \leqslant n$ if $C_0$ is small enough (depending on $C_1$). 

Thus, we can apply proposition~\ref{proposition:BFLM_2} to find that if
\[
\cal W_\gamma( \rho^{\ast  n} \ast \delta_x, \nu)\geqslant t
\]
then there is $p/q \in \Q^d/\Z^d$ with $q \leqslant C_2t^{-C_2}$ such that
\[
d\left(x,\frac p q\right) \leqslant e^{-\lambda n}
\]
Thus, as we shall assume without any loss of generality that $C_2 \left(\frac{5}{C} \right)^{C_2} \leqslant 1$ and $C_0 C_2 \leqslant 1$, we get that
\begin{flalign*}
q& \leqslant C_2 \left(\frac{5}{C\psi(n)h_\varphi(x)^\delta} \right)^{C_2} \leqslant C_2 \left(\frac{5}{C\psi(n)} \right)^{C_2} = C_2 \left(\frac{5}{C} \right)^{C_2} \left(\varphi^{-1}(e^{C_1n})\right)^{C_2C_0}  \\&\leqslant \varphi^{-1}(e^{C_1 n})
\end{flalign*}
and
\[
 e^{ \lambda n }\leqslant \|x-p/q\|^{-1} \leqslant  \varphi(q)h_\varphi(x) \leqslant \varphi(q) \left( \frac 2 {C \psi(n)} \right)^{1/\delta}  \leqslant e^{C_1 n}\left( \frac 2 {C \psi(n)} \right)^{1/\delta}
\]
Thus,
\[
\psi(n) \leqslant \frac 2 C e^{-\delta(\lambda-C_1)n}
\]
but,
\[
\psi(n) \geqslant \left( \frac{ e^{C_1 n}}{C_3} \right)^{-C_0}
\]
Which leads to a contradiction if $C_1< \lambda$, $C_0$ is small enough and $C$ is large enough.

Thus, there is no $n\in \N$ and $x\in \X$ such that $C \psi(n) h_\varphi(x)^\delta \leqslant 2$ and
\[
\cal W_\gamma (\rho^{\ast n}\ast \delta_x,\nu) \geqslant \frac 1 5 C \psi(n) h_\varphi(x)^\delta
\]
So, for any $n\in \N$ and any $x\in \T^d$,
\[
\cal W_\gamma (\rho^{\ast n}\ast \delta_x,\nu) \leqslant C \psi(n) h_\varphi(x)^\delta
\]
which is what we intended to prove.
\end{proof}

\subsection{Diophantine control along the walk}

\begin{miniabstract}
In this section, we are going to prove that if $x\in \X$ satisfies a diophantine condition, then so does the $gx$ with $g\in \mathrm{SL}_d(\Z)$. We will deduce from this a control of the speed of convergence in proposition~\ref{proposition:BFLM_2} along the walk.
\end{miniabstract}

We saw in proposition~\ref{proposition:vitesse_convergence_tore} that for any irrational point $x$ of $\T^d$, $\rho^{\ast n}\ast \delta_x$ converges for Wasserstein's distance to Lebesgue's measure on the torus. Moreover, the rate depend on the way $x$ can be approximated by rational points of the torus.

To prove the central limit theorem starting at some point $x$, we will have to control the rate of convergence of $\rho^{\ast n} \ast \delta_y$ for any $y$ of $\G x$ ; the problem being that the function $h_\varphi$ that we defined may take arbitrarily large values on $\G x$.

However, the set of points where $h_\varphi$ is finite is invariant under the action of $\Gamma=\mathrm{SL}_d(\Z)$ as one may see noting that for $x\in \T^d$, $p \in \Q^d/\Z^d$ and $g\in \Gamma$ we have
\[
\|g\| d(x,g^{-1} p) \geqslant d(gx,p) = d(gx,gg^{-1} p) \geqslant \frac 1 {\|g^{-1}\|} d(x,g^{-1} p)
\]
and $g^{-1} p $ is a rational point with the same denominator than $p$ (since $g^{-1}$ has integer coefficients) and this estimation proves that for any $g\in \mathrm{SL}_d(\Z)$ and any $x\in \T^d$,
\[
h_\varphi(gx) \leqslant \|g\| h_\varphi(x)
\]
In this section, we are going to prove that we can obtain a control that is far better than this trivial one. We will indeed prove that for any irrational point $x$ of the torus, in average, $gx$ is further from the rationals than $x$.

To do so, we begin by showing that, in average, $gx$ is further from $0$ than $x$. We will prove this in proposition~\ref{proposition:recurrence_loin_0} but at first, we will need the next
\begin{lemma}
Let $\rho$ be a borelian probability measure on $\mathrm{SL}_d(\Z)$ whose support generates a strongly irreducible and proximal group and which has an exponential moment.

For any $\delta \in \R_+^\ast$ and $x\in \T^d \setminus\{0\}$, we note
\[
u_\delta(x) = \frac 1 {d(x,0)^\delta}
\]
Then, there are $n_0\in \N$, $\delta\in \R_+^\ast$, $a\in [0,1[$ and $b\in \R$ such that for any $x\in \T^d \setminus\{0\}$,
\[
P^{n_0} u_\delta(x) \leqslant au_\delta(x) + b
\]
\end{lemma}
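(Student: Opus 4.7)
The plan is to reduce this drift inequality on $\T^d$ to the corresponding contraction for the linear action of $\mathrm{SL}_d(\Z)$ on $\R^d\setminus\{0\}$, which is supplied by Le Page's theory for random matrix products. Two elementary geometric facts glue the two pictures together. First, if $v$ denotes the representative of $x\in \T^d$ of minimal Euclidean norm, so that $\|v\|=d(x,0)$, and if $g\in\mathrm{SL}_d(\Z)$ satisfies $\|g\|\,\|v\|\leqslant 1/2$, then $\|gv\|\leqslant 1/2$ forces the closest integer vector to $gv$ to be $0$, and we get $d(gx,0)=\|gv\|$ exactly. Second, $g^{-1}$ also lies in $\mathrm{SL}_d(\Z)$ and therefore acts on $\T^d$ as a $\|g^{-1}\|$-Lipschitz map for the quotient metric, so $d(x,0)=d(g^{-1}(gx),0)\leqslant\|g^{-1}\|\,d(gx,0)$, which gives the crude pointwise bound $u_\delta(gx)\leqslant \|g^{-1}\|^\delta u_\delta(x)$ valid for every $x\in\T^d\setminus\{0\}$.

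The main analytic input is Le Page's theorem under strong irreducibility, proximality and the exponential moment hypothesis (cf.~\cite{Gu06}): there exists $\delta_0>0$ such that for every $\delta\in ]0,\delta_0[$ one can find $n_0\in\N$ and $\alpha\in ]0,1[$ with
\[
\int \|gv\|^{-\delta}\,\di\rho^{\ast n_0}(g) \leqslant \alpha\,\|v\|^{-\delta}\qquad\text{for all }v\in\R^d\setminus\{0\}.
\]
I would take $\delta$ small enough that the quantity $M:=\int\|g^{-1}\|^\delta\,\di\rho^{\ast n_0}(g)$ is also finite, which is automatic from the exponential moment since $\|g^{-1}\|\leqslant C\|g\|^{d-1}$ on $\mathrm{SL}_d(\R)$. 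Then I would fix $R_0>0$ large enough that
\[
\int_{\|g\|>R_0}\|g^{-1}\|^\delta\,\di\rho^{\ast n_0}(g)\leqslant \frac{1-\alpha}{2}.
\]

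Setting $\eta:=1/(2R_0)$, I would finish by distinguishing two regimes for $x$. If $d(x,0)\leqslant\eta$, I cut the $\rho^{\ast n_0}$-integral defining $P^{n_0}u_\delta(x)$ at $\|g\|=R_0$: the first geometric fact gives $d(gx,0)=\|gv\|$ on $\{\|g\|\leqslant R_0\}$, so Le Page bounds that piece by $\alpha u_\delta(x)$, while the Lipschitz estimate combined with the tail control bounds the complementary piece by $\frac{1-\alpha}{2}u_\delta(x)$; summing yields $P^{n_0}u_\delta(x)\leqslant\frac{1+\alpha}{2}u_\delta(x)$. If instead $d(x,0)>\eta$, then $u_\delta(x)\leqslant\eta^{-\delta}$ and the global Lipschitz bound alone yields $P^{n_0}u_\delta(x)\leqslant M\eta^{-\delta}$. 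Taking $a=(1+\alpha)/2$ and $b=M\eta^{-\delta}$ proves the lemma. The only delicate ingredient is quoting Le Page's contraction in the precise multiplicative form above; everything else is a truncation argument together with the two lines of metric geometry on the torus that open the proof.
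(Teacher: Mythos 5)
Your argument is essentially the same as the paper's: both lift the problem to $\R^d$, invoke the Le Page contraction estimate (the paper cites~\cite{BL85}, Theorem~6.1, in the form $\sup_v\int e^{-\delta\ln(\|gv\|/\|v\|)}\,\di\rho^{\ast n}(g)\leqslant Ce^{-tn}$ and then picks $n_0$ with $Ce^{-tn_0}$ small, which is exactly your multiplicative contraction for a fixed $n_0$), truncate the $\rho^{\ast n_0}$-integral at a norm cutoff handled by the exponential moment, and dispatch points far from $0$ by the crude $\|g^{-1}\|^\delta$-Lipschitz bound. The only cosmetic difference is that you work with the minimal-norm lift and a cutoff $\|g\|\|v\|\leqslant 1/2$, which is slightly cleaner than the paper's $\|g\|\leqslant 1/\varepsilon$ on a lift in $B(0,\varepsilon)$ (where one really needs the product to be below half the injectivity radius); this does not change the structure of the proof.
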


\begin{proof}
The proof is by going back to $\R^d$ since our assumptions imply that the first Lyapunov exponent is strictly non negative.

Let $\varepsilon \in \R_+^\ast$ and $\overline x\in B(0,\varepsilon) \subset \T^d$. Choose a point $x$ in $B(0,\varepsilon) \subset \R^d$ whose projection on the torus is $\overline x$. Then, for any $n\in \N$,
\begin{flalign*}
P^n u_\delta(\overline{x}) &= \int_\G d(g\overline{x},0)^{-\delta} \di\rho^{\ast n}(g) \\
&= \int_\G \un_{\|g\|\leqslant \frac 1 \varepsilon} d(g\overline{x},0)^{-\delta} \di\rho^{\ast n}(g)  + \int_\G \un_{\|g\|> \frac 1 \varepsilon} d(g\overline{x},0)^{-\delta} \di\rho^{\ast n}(g) \\
&= \int_\G \un_{\|g\|\leqslant \frac 1 \varepsilon} \|gx\|^{-\delta} \di\rho^{\ast n}(g) + \int_\G \un_{\|g\|> \frac 1 \varepsilon} d(g\overline{x},0)^{-\delta} \di\rho^{\ast n}(g) \\
& \leqslant \int_\G \|gx\|^{-\delta} \di\rho^{\ast n}(g) + \int_\G \un_{\|g\|>1/\varepsilon} \|g^{-1} \|^\delta \|x\|^{-\delta} \di\rho^{\ast n}(g) \\
& \leqslant \|x\|^{-\delta} \left( \int_\G e^{-\delta \ln \frac{\|gx\|}{\|x\|}} \di\rho^{\ast n}(g) + \int_\G \un_{\|g\|>1/\varepsilon} \|g^{-1}\|^\delta \di\rho^{\ast n}(g)\right)
\end{flalign*}
Moreover, there is $\delta_0 \in \R_+^\ast$ such that for any $\delta \in ]0,\delta_0]$ there are $C, t \in \R_+^\ast$ such that for any $n\in \N$,
\[
\sup_{x\in \R^d\setminus\{0\}} \int_\G e^{-\delta \ln\frac{\|gx\|}{\|x\|}} \di\rho^{\ast n}(g) \leqslant Ce^{-tn}
\]
(we refer to~\cite{BL85} theorem~6.1, for a proof of this result).

And so, we get that for any $x\in B(0,\varepsilon) \setminus\{0\}$,
\[
P^n u_\delta (\overline x) \leqslant u_\delta(\overline x) \left( Ce^{-tn} + \int_\G \un_{\|g\|>1/\varepsilon} \|g^{-1}\|^\delta \di\rho^{\ast n}(g) \right)
\]
Let $n_0$ be such that $Ce^{-tn_0} \leqslant 1/4$ and $\varepsilon$ such that
\[
\int_\G \un_{\|g\|>1/\varepsilon} \|g^{-1}\|^\delta \di\rho^{\ast n_0}(g)\leqslant 1/4
\]
(such an $\varepsilon $ exists since $\rho$ has an exponential moment).

What we get is that for this choice of $n_0$ and $\varepsilon$, for any $\overline x\in B(0,\varepsilon) \setminus\{0\}$,
\[
P^{n_0} u_\delta (x) \leqslant \frac 1 2 u_\delta(x)
\]
Moreover, if $\overline x$ is on the complement set of the ball,
\begin{flalign*}
P^n u_\delta(\overline x) &= \int_\G d(g\overline x,0)^{-\delta} \di\rho^{\ast n}(g) \leqslant d(\overline x,0)^{-\delta} \int_\G \|g^{-1}\|^{\delta} \di\rho^{\ast n}(g) \\ & \leqslant \varepsilon^{-\delta} \int_\G \|g^{-1}\|^{\delta} \di\rho^{\ast n}(g)
\end{flalign*}
and this finishes the proof of the lemma.
\end{proof}

From now on, we fix $\delta \in \R_+^\ast$ such that the function $u_\delta$ satisfies $P^{n_0} u_\delta \leqslant a u_\delta + b$ for some $n_0 \in \N^\ast$, $a\in [0,1[$ and $b\in \R$. Let $a_1\in ]a,1[$ be such that $a_1^{-n_0} a \leqslant 1$.

Note
\[
u_0 = \sum_{k=0}^{n_0-1} a_1^{-k} P^k u_\delta
\]
Then,
\begin{flalign*}
P u_0 &= \sum_{k=0}^{n_0-1} a_1^{-k} P^{k+1} u_\delta = a_1 \sum_{k=1}^{n_0-1} a_1^{-k} P^k u_\delta + a^{-(n_0-1)} P^{n_0} u_\delta \\
& \leqslant a_1 \sum_{k=1}^{n_0-1} a_1^{-k} P^k u_\delta +  a_1^{-(n_0-1)} (a u_\delta +b) \\
& \leqslant a_1 u_0(x) + ba_1^{-(n_0 -1)}
\end{flalign*}
Moreover, as
\[
u_\delta(x) \int_\G \|g\|^{-\delta} \di\rho^{\ast k}(g) \leqslant P^k u_\delta(x) = \int_\G \|gx\|^{-\delta} \di\rho^{\ast k}(g) \leqslant  u_\delta(x) \int_\G \|g^{-1}\|^\delta \di\rho^{\ast k}(g),
\]
the function $u_0$ that we constructed is also equivalent to $d(x,0)^{-\delta}$ or more specifically,
\[
0< \inf_{x\in \T^d\setminus 0} \frac {u_0(x)}{ d(x,0)^{-\delta}} < \sup_{x\in \T^d\setminus\{0\} } \frac {u_0(x)}{ d(x,0)^{-\delta}} < +\infty
\]
So what we just proved is the following
\begin{proposition} \label{proposition:recurrence_loin_0}
Let $\rho$ be a borelian probability measure on $\mathrm{SL}_d(\Z)$ whose support generates a strongly irreducible and proximal group and which has an exponential moment.

Then, there is $\delta\in \R_+^\ast$, $a\in[0,1[$, $b\in \R$ and a function $u_0$ on $\T^d$, such that
\[
0< \inf_{x\in \T^d\setminus 0} \frac {u_0(x)}{ d(x,0)^{-\delta}} < \sup_{x\in \T^d\setminus\{0\} } \frac {u_0(x)}{ d(x,0)^{-\delta}} < +\infty
\]
and
\[
P u_{0}\leqslant au_0 +b
\]
\end{proposition}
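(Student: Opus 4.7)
The plan is to upgrade the $n_0$-step contraction supplied by the preceding (unlabeled) lemma into a genuine one-step drift inequality via a standard geometric averaging, and then to verify that the averaged function remains comparable to $d(\cdot,0)^{-\delta}$.

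Concretely, I would fix the parameters $n_0$, $\delta$, $a\in[0,1[$, $b\in\R$, and the function $u_\delta(x)=d(x,0)^{-\delta}$ produced by that lemma, so that $P^{n_0}u_\delta\leqslant au_\delta+b$. I choose $a_1\in[a^{1/n_0},1[$, which in particular forces $a\,a_1^{-(n_0-1)}\leqslant a_1$, and set
\[
u_0 := \sum_{k=0}^{n_0-1} a_1^{-k}\,P^k u_\delta.
\]
Applying $P$, reindexing the sum from $k=1$ to $n_0$, and substituting the contraction $P^{n_0}u_\delta\leqslant au_\delta+b$ into the resulting final summand yields $Pu_0\leqslant a_1 u_0 + b\,a_1^{-(n_0-1)}$, which is the desired one-step drift inequality.

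For the two-sided bound $u_0\asymp d(\cdot,0)^{-\delta}$, I split into two regions. On a small enough neighborhood of $0$, lifting to $\R^d$ as in the preceding lemma gives, for each $k\leqslant n_0-1$,
\[
u_\delta(x)\int_\G \|g\|^{-\delta}\,\di\rho^{\ast k}(g) \;\leqslant\; P^k u_\delta(x) \;\leqslant\; u_\delta(x)\int_\G \|g^{-1}\|^{\delta}\,\di\rho^{\ast k}(g),
\]
and both integrals are finite and bounded away from $0$ provided $\delta$ is small with respect to the exponential moment of $\rho$. Taking the positive linear combination defining $u_0$ preserves both inequalities, hence $u_0\asymp u_\delta$ in a neighborhood of $0$. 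Outside any fixed neighborhood of $0$ both $u_0$ and $d(\cdot,0)^{-\delta}$ are bounded above and bounded away from $0$, so the comparison is automatic there.

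The substantive content is really in the preceding lemma, not in this proposition: once the $n_0$-step contraction is in hand, the averaging step is routine. That lemma itself exploits the strict positivity of the top Lyapunov exponent $\lambda_1$, which forces $\int_\G e^{-\delta\ln(\|gx\|/\|x\|)}\,\di\rho^{\ast n}$ to decay exponentially in $n$ for small enough $\delta$, together with the exponential moment to control the contribution of elements of $\mathrm{SL}_d(\Z)$ of large operator norm that could otherwise wrap the lift of $x$ back close to $0$.
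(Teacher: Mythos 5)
Your proof is correct and follows essentially the same route as the paper: fix $a_1\in[a^{1/n_0},1[$ so that $a\,a_1^{-(n_0-1)}\leqslant a_1$, set $u_0=\sum_{k=0}^{n_0-1}a_1^{-k}P^ku_\delta$, reindex to obtain the one-step drift, and use the two-sided bound $\|g\|^{-\delta}u_\delta\leqslant u_\delta\circ g\leqslant\|g^{-1}\|^\delta u_\delta$ (valid for all of $\T^d\setminus\{0\}$, not only near $0$) together with the exponential moment to get $u_0\asymp u_\delta$. The near-$0$/away-from-$0$ split in your comparability argument is superfluous since the displayed inequality already holds globally, but this does not affect correctness.
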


Now, we are going to use this function $u_0$ to construct some other that will allow us to prove that if $x$ is not well approximable by rational points, then, in $\rho-$average, so are the $gx$.

What we will do is, for a fixed diophantine condition  $\varphi$, constructing $u_\varphi$ such that $Pu_{\varphi} \leqslant au_\varphi + b$ and $u_\varphi$ is finite on points satisfying the condition $\varphi$.

\medskip
For $Q \in \N^\ast$, we note $\X_Q$ the set of primitives elements in $\frac 1 Q \Z^d/ \Z^d$ that is to say, the set of elements of $\frac 1 Q \Z^d/\Z^d$ that doesn't belong to $\frac 1 {q} \Z^d/\Z^d$ for $q <Q$. 

Then, $\X_Q$ is $\mathrm{SL}_d(\Z)-$invariant : indeed, if $p \in \X_Q$ then $gp \in \frac 1 Q \Z^d / \Z^d$ since $g$ has integer coefficients and $gp$ can not belong to $\frac 1 q \Z^d / \Z^d$ with $q<Q$ because if it was so, so would $p= g^{-1} g p$.

\medskip
Let $\varphi : \N \to \R_+^\ast$ be a strictly non decreasing function. For $x\in \T^d \setminus\{0\}$, we note
\[
u_\varphi(x) = \sum_{ Q\in \N^\ast} \frac 1{ \varphi(Q)^\delta } \sum_{p\in \X_Q} u_0 (x-p)
\]
This function $u_\varphi$ is proper (it is non negative and lower semi-continuous)

Moreover, it carries the diophantine properties of $x$.

Indeed, by definition of $h_\varphi(x)$ (see the previous section), we have that for some constant $C$ that doesn't depend on $\varphi$,
\[
h_\varphi(x)^\delta \leqslant C u_\varphi(x)
\]
and reciprocally, if $\varphi': \R\to \R_+^\ast$ is an other strictly non decreasing function such that $\varphi'(Q) \in \cal O(\varphi(Q) Q^{-(d+2)/\delta)})$ then,
\[
u_\varphi(x) \leqslant C h_{\varphi'}(x)^\delta \sum_{Q\in \N^\ast} Q^d \left(\frac{ \varphi'(Q)}{\varphi(Q)} \right)^{\delta}
\]
and so, if $h_{\varphi'}(x)$ is finite, so is $u_\varphi(x)$.

\medskip
Thus, controlling $u_\varphi(x)$ is controlling the diophantine properties of $x$ and reciprocally.

The aim of this construction is the following
\begin{lemma} \label{lemma:definition_u_varphi}
Let $u_0$ be the function constructed in the previous lemma.

Let $\varphi: \N\to \R_+^\ast$ be a strictly non decreasing function such that
\[
\sum_n \frac{n^d}{\varphi(n)^\delta} <+\infty
\]

For $x\in \T^d$, note
\[
u_\varphi(x) = 1+ \sum_{ Q\in \N^\ast} \frac 1{\varphi(Q) ^\delta} \sum_{p\in \X_Q} u_0 (x-p) 
\]

Then, there are $a\in ]0,1[$ and $b\in \R$ such that
\[
Pu_\varphi\leqslant au_\varphi+b
\]
\end{lemma}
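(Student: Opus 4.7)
The plan is to exploit the $\mathrm{SL}_d(\Z)$-invariance of each $\X_Q$ to reduce the drift inequality for $u_\varphi$ to the one already proved for $u_0$ in Proposition~\ref{proposition:recurrence_loin_0}. Since every term in the definition of $u_\varphi$ is non-negative, Tonelli's theorem allows one to move $P$ inside the double sum:
\[
Pu_\varphi(x) = 1 + \sum_{Q\geqslant 1} \frac{1}{\varphi(Q)^\delta} \sum_{p\in\X_Q} \int_\Gamma u_0(gx-p)\,\di\rho(g).
\]

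The key observation is that for each fixed $g\in\mathrm{SL}_d(\Z)$, the map $p\mapsto gp$ is a bijection of $\X_Q$, since both $g$ and $g^{-1}$ preserve $\frac1Q\Z^d/\Z^d$ and the notion of primitivity (as noted in the excerpt just before the lemma). Performing the change of variable $p=gp'$ and then exchanging $\sum_{p'}$ with $\int_\Gamma$,
\[
\sum_{p\in\X_Q}\int_\Gamma u_0(gx-p)\,\di\rho(g) = \int_\Gamma \sum_{p'\in\X_Q} u_0\bigl(g(x-p')\bigr)\,\di\rho(g) = \sum_{p'\in\X_Q} Pu_0(x-p').
\]
Applying the drift inequality $Pu_0\leqslant au_0+b$ pointwise at each $x-p'$ yields
\[
\sum_{p\in\X_Q}\int_\Gamma u_0(gx-p)\,\di\rho(g) \leqslant a\sum_{p'\in\X_Q} u_0(x-p') + b\,|\X_Q|.
\]

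Summing over $Q$ with the weights $\varphi(Q)^{-\delta}$ and using $|\X_Q|\leqslant Q^d$ together with the hypothesis $\sum_{Q\geqslant 1} Q^d\varphi(Q)^{-\delta}<+\infty$, one gets a finite constant $C := \sum_{Q\geqslant 1}|\X_Q|\varphi(Q)^{-\delta}$ and
\[
Pu_\varphi(x) \leqslant 1 + a\bigl(u_\varphi(x)-1\bigr) + bC \leqslant a\,u_\varphi(x) + (1+bC),
\]
which gives the conclusion with the same $a\in]0,1[$ as for $u_0$ and $b':=1+bC$.

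The only genuine subtlety is the combinatorial one: the $\Gamma$-invariance of $\X_Q$, which lets the walk ``redistribute'' the singularities at rationals of denominator exactly $Q$ without ever mixing across different denominators. The Tonelli justifications are automatic from positivity, and the convergence of the series $\sum_Q Q^d/\varphi(Q)^\delta$ is precisely the hypothesis built into the statement of the lemma.
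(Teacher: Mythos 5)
Your argument is correct and is essentially the paper's own proof: both exploit that $g\in\mathrm{SL}_d(\Z)$ permutes $\X_Q$ to rewrite $\sum_{p\in\X_Q} u_0(gx-p)$ as $\sum_{p'\in\X_Q} u_0(g(x-p'))$, then apply the drift inequality $Pu_0\leqslant au_0+b$ termwise and sum over $Q$ using $|\X_Q|\leqslant Q^d$ and the summability hypothesis. The only cosmetic difference is that the paper first introduces the intermediate quantity $u_Q(x)=\sum_{p\in\X_Q}u_0(x-p)$ and proves $Pu_Q\leqslant au_Q+bQ^d$ before summing, whereas you carry the double sum throughout; the content is identical.
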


\begin{remark}
One has to think of $\varphi$ has growing very fast (we will take $\varphi(n) = e^{Bn^\beta}$) so the summability assumption will always be satisfied and multiplying $\varphi$ by a polynomial function doesn't really change the points where $u_\varphi$ takes finite values. Therefore, it is almost the same thing to say that $u_\varphi(x)$ is finite or that $h_\varphi(x)$ is.
\end{remark}

\begin{proof}
Let's remind that $Pu_0 \leqslant a u_0 + b$ for some $a\in ]0,1]$ and $b\in \R$.

And so, if we note, for $Q \in \N^\ast$ and $x\in \T^d \setminus \Q^d/\Z^d$,
\[
u_Q(x) = \sum_{p\in \X_Q} u_0(x-p)
\]
we have, using that $\mathrm{SL}_d(\Z)$ permutes $\X_Q$, that
\begin{flalign*}
Pu_Q(x) &= \int_\G \sum_{p\in \X_Q} u_0(gx-p )\di\rho(g) = \int_\G \sum_{p \in \X_Q} u_0(g(x-p)) \di\rho(g) \\
& = \sum_{p\in \X_Q} Pu_0(x-p) \leqslant a\sum_{p\in \X_Q} u_0(x-p) + b|\X_Q| \\
& \leqslant au_Q(x) + bQ^d
\end{flalign*}
where we used that $|\X_Q| \leqslant Q^d$.

And so,
\[
P(u_\varphi) (x) \leqslant 1 +\sum_{Q\in \N^\ast} \frac 1 { \varphi (Q)^\delta} Pu_Q(x) \leqslant au_\varphi(x) + 1-a + b\sum_{Q\in \N^\ast} \frac {Q^d} { \varphi(Q)^\delta} 
\]
\end{proof}

We are finally able to solve Poisson's equation for hölder-continuous functions in next
\begin{corollary} \label{corollaire:solution_poisson_tore_diophantien}
Under the hypothesis of proposition~\ref{proposition:vitesse_convergence_tore}, for any $\gamma \in ]0,1]$ and any $M \in \R_+^\ast$, there is $\beta_0\in \R_+^\ast$ such that for any $B\in \R_+^\ast$ and any $\beta \in ]0,\beta_0[$, there is a constant $C$ such that, noting $\varphi(n)= e^{B n^\beta}$, we have that for any $x$ such that
\[
u_\varphi(x) <+\infty
\]
we have that
\[
\cal W_\gamma (\rho^{\ast n} \ast \delta_x,\nu) \leqslant \frac C{n^{1+M}} u_\varphi(x)
\]
In particular, for any $\gamma-$hölder continuous function $f$ on the torus, there is $g\in \cal F^3_{u_\varphi}$ (cf section~\ref{section:drift}) such that,
\[
f=g-Pg + \int f\di\nu \text{ on }\{u_\varphi<+\infty\} \text{ and }\|g\|_{\cal F_u^3} \leqslant C \|f\|_\gamma
\]
\end{corollary}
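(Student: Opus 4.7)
The strategy is to apply Proposition~\ref{proposition:vitesse_convergence_tore} twice to the specific function $\varphi(n) = e^{Bn^\beta}$, with two choices of its exponent: once to establish the Wasserstein bound in the form stated, and once to obtain a sharper bound with $u_\varphi(x)^{1/3}$ in place of $u_\varphi(x)$, from which I solve Poisson's equation via the telescoping sum $g(x) := \sum_{n\geqslant 0}\bigl(P^n f(x) - \int f\,\di\nu\bigr)$.

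The first task is to translate $\psi$ into genuine polynomial decay. For $\varphi(n) = e^{Bn^\beta}$ one inverts explicitly to get $\varphi^{-1}(s)=(B^{-1}\ln s)^{1/\beta}$, so $\psi(n) = (C_1 n/B)^{-C_0/\beta}$. Looking back at the proof of Proposition~\ref{proposition:vitesse_convergence_tore}, the constants $C_0$ and $C_1$ are chosen from $\gamma$, the exponent of that proposition, the BFLM constant $C_2$, and the Lyapunov exponent $\lambda_1$ (with $C_1 < \lambda_1$ and $C_0$ small enough depending on $C_1$), and therefore do not depend on $B$ or $\beta$. Choosing $\beta_0 := C_0/(1+M)$ then yields $C_0/\beta > 1+M$ for every $\beta \in (0,\beta_0)$, hence $\psi(n) \leqslant C\,n^{-(1+M)}$ with $C$ allowed to depend on $B$.

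To get the stated Wasserstein estimate, I apply Proposition~\ref{proposition:vitesse_convergence_tore} with its exponent set equal to the drift exponent $\delta$ of Proposition~\ref{proposition:recurrence_loin_0}; the comparison $h_\varphi^\delta \leqslant C u_\varphi$ recorded right before Lemma~\ref{lemma:definition_u_varphi} closes that case. For the Poisson equation I repeat the argument with the exponent $\delta/3$, giving $h_\varphi^{\delta/3} \leqslant C u_\varphi^{1/3}$ and therefore
\[
\Bigl|P^n f(x) - \int f \,\di\nu\Bigr| \leqslant \cal W_\gamma(\rho^{\ast n}\ast \delta_x,\nu)\,\|f\|_\gamma \leqslant \frac{C\|f\|_\gamma}{n^{1+M}}\,u_\varphi(x)^{1/3}.
\]
Summing over $n$ and using $M>0$ shows that $g$ is well defined on $\{u_\varphi <+\infty\}$, with $|g(x)| \leqslant C'\|f\|_\gamma u_\varphi(x)^{1/3}$, i.e.\ $g \in \cal F^3_{u_\varphi}$ with the asserted norm bound. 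The identity $f - \int f\,\di\nu = g - Pg$ then follows by the usual telescoping, the exchange of $P$ and the series being legitimate by Fubini: the drift inequality $Pu_\varphi \leqslant au_\varphi + b$ of Lemma~\ref{lemma:definition_u_varphi} combined with concavity of $t\mapsto t^{1/3}$ yields $Pu_\varphi^{1/3} \leqslant a^{1/3} u_\varphi^{1/3} + b^{1/3}$, finite on $\{u_\varphi < +\infty\}$.

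The delicate part is verifying that the constants $C_0$ and $C_1$ of Proposition~\ref{proposition:vitesse_convergence_tore} are independent of $B$ and $\beta$, which is what allows a single threshold $\beta_0$ to work uniformly in $B$. The technical trick of decreasing the exponent from $\delta$ to $\delta/3$ in the second application is precisely what places $g$ in $\cal F^3_{u_\varphi}$ with $3>2$, so that the martingale machinery of Corollary~\ref{corollary:TCL_martingales} becomes applicable and will produce the central limit theorem and law of the iterated logarithm of Theorem~\ref{theorem:TCL_tore_diophantien}.
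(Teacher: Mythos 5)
Your proof is correct and follows essentially the same route as the paper: apply Proposition~\ref{proposition:vitesse_convergence_tore} with the exponent taken to be $\delta/3$ (so that $h_\varphi^{\delta/3}\leqslant C\,u_\varphi^{1/3}$, which also gives the cruder $\leqslant C\,u_\varphi$ since $u_\varphi\geqslant 1$), verify that the resulting $\psi$ has polynomial decay of order $1+M$ by choosing $\beta_0 = C_0/(1+M)$, and then solve Poisson's equation by $g=\sum_n P^n(f-\int f\,\di\nu)$. You make explicit two points the paper leaves implicit — the independence of $C_0,C_1$ from $B$ and $\beta$, and the Fubini justification for $Pg=\sum_n P^{n+1}(f-\int f\,\di\nu)$ via concavity of $t\mapsto t^{1/3}$ and the drift inequality — but these are refinements, not a different argument.
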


\begin{proof}
We apply proposition~\ref{proposition:vitesse_convergence_tore} noting that in this case, there is a consant $C$ such that for any $n\in \N^\ast$,
\[
\cal W_\gamma (\rho^{\ast n } \ast \delta_x,\nu) \leqslant \frac{C}{n^{1+M}}h_\varphi(x)^{\delta/3} \leqslant \frac{C}{n^{1+M}}u_\varphi(x)
\]
and so,
\[
\left(\sum_n \cal W_\gamma(\rho^{\ast n } \ast \delta_x,\nu)\right)^3 \leqslant C^3 u_\varphi(x) \left(1+\sum_{n\in \N^\ast} \frac 1 {n^{1+M}} \right)^3
\]
So, we can set
\[
g = \sum_{n\in \N} P^n \left(f-\int f\di\nu\right)
\]
noting that, by definition of Wasserstein's distance, for any $n\in \N$ and any $x\in \X$,
\[
\left|P^nf(x) - \int f\di\nu\right|\leqslant \|f\|_\gamma \cal W_\gamma(\rho^{\ast n } \ast \delta_x,\nu)
\]
\end{proof}

\subsection{Central limit theorem and law of the iterated logarithm}

\begin{miniabstract}
In this section, we use the result of the previous ones to finally prove the central limit theorem and the law of the iterated logarithm for the random walk on the torus.\end{miniabstract}

As we now know with corollary~\ref{proposition:vitesse_convergence_tore}, holder continuous functions $f$ on the torus writes $f=g-Pg + \int f \di\nu$ where $g$ is dominated by a drift function finite on points badly approximalble by rationals. We are going to prove the the validity of ``law of large numbers''-type hypothesis in corollary~\ref{corollary:TCL_martingales} and this will allow us to prove the central limit theorem and the law of the iterated logarithm. We don't know how to prove the law of large numbers for functions of $\eupp$ and this is why we will go back to the function $f$ to use the speed of convergence given by our corollary or Bourgain-Furmann-Lindenstrauss-Mozes's theorem.

\medskip
We will need the following
\begin{lemma} \label{lemme:preliminaire_variance}
Let $\rho$ be a borelian probability measure on $\mathrm{SL}_d(\Z)$ whose support generates a strongly irreducible and proximal group and which has an exponential moment.

For any $\gamma \in ]0,1]$, there is $\alpha_0 \in \R$ such that for any $\alpha \in ]\alpha_0, +\infty[$ there is $\beta_0 \in \R_+^\ast$ such that for any $\beta \in ]0,\beta_0[$ and any $B\in \R$, noting $\varphi(q) = e^{Bq^\beta}$, we have that for any sequence $(f_n)$ of $\gamma-$hölder-continuous functions on the torus and such that $\int f_n \di\nu=0$,
\[
\esp_x \left| \sum_{k=0}^{n-1} f_k(X_k) \right|^4 = \cal O \left(\frac{n^3}{(\ln n)^{\alpha-1}} u_\varphi(x) \max_{k\in \lib0,n\rib} \|f_k\|_\gamma^4 \right)
\]
Where the involved constant doesn't depend on $n$, $x$ nor the sequence $(f_n)$.
\end{lemma}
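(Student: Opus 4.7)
The plan is to expand the fourth power as a four-fold sum and to control each term using the Markov property combined with the pointwise decay of $P^m f$ on centered $\gamma$-Hölder $f$ provided by Corollary~\ref{corollaire:solution_poisson_tore_diophantien}, together with the drift inequality $Pu_\varphi \leqslant au_\varphi + b$ from Lemma~\ref{lemma:definition_u_varphi}.

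More precisely, I choose $\alpha_0$ so that one can invoke Corollary~\ref{corollaire:solution_poisson_tore_diophantien} with $M = \alpha$ for every $\alpha > \alpha_0$; this produces $\beta_0$ such that for $\beta \in {]}0,\beta_0{[}$ and $\varphi(q) = e^{Bq^\beta}$, every centered $\gamma$-Hölder $f$ on $\T^d$ satisfies
\[
|P^m f(x)| \leqslant C\|f\|_\gamma (m+1)^{-(1+\alpha)} u_\varphi(x) \quad \text{on } \{u_\varphi<+\infty\}.
\]
The drift estimate gives $\esp_x[u_\varphi(X_k)] \leqslant u_\varphi(x) + b/(1-a)$ uniformly in $k$. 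Expanding
\[
\esp_x\!\left|\sum_{k=0}^{n-1} f_k(X_k)\right|^4 = \sum_{(k_1,k_2,k_3,k_4)\in\{0,\dots,n-1\}^4} \esp_x\!\left[\textstyle\prod_{i=1}^4 f_{k_i}(X_{k_i})\right]
\]
and reducing by symmetry to ordered tuples $0 \leqslant k_1 \leqslant k_2 \leqslant k_3 \leqslant k_4 \leqslant n-1$, I condition on $X_{k_3}$ so that $f_{k_4}(X_{k_4})$ becomes $P^{k_4-k_3}f_{k_4}(X_{k_3})$. Bounding the other $f_{k_i}$ by $M := \max_k\|f_k\|_\gamma$ and invoking the drift estimate then yields
\[
\big|\esp_x[\textstyle\prod_i f_{k_i}(X_{k_i})]\big| \leqslant CM^4 (k_4-k_3+1)^{-(1+\alpha)} u_\varphi(x).
\]

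To obtain the $(\ln n)^{1-\alpha}$ improvement over the direct $O(n^3)$ estimate, I split the sum over tuples according to the size of the largest gap $m^\ast := \max_i(k_{i+1}-k_i)$. When $m^\ast > \ln n$, one conditions at the endpoint of the gap realising the maximum, instead of always at $X_{k_3}$, and the decay factor becomes at most $(\ln n)^{-(1+\alpha)}$; the summation over the at most $O(n^3)$ such tuples contributes $O(n^3 (\ln n)^{-\alpha})$, already better than the target. When every gap is at most $\ln n$, only $O(n(\ln n)^3)$ tuples appear and the crude pointwise bound $CM^4 u_\varphi(x)$ suffices. The main obstacle is that after conditioning at an intermediate time, the auxiliary function $f_{k_i}\cdot P^{k_{i+1}-k_i}f_{k_{i+1}}$ is neither centered nor Hölder, so Corollary~\ref{corollaire:solution_poisson_tore_diophantien} cannot be iterated directly; I circumvent this by writing it as its $\nu$-mean (which is of order $M^2(k_{i+1}-k_i+1)^{-(1+\alpha)}$ thanks to $u_\varphi \in \mathrm L^1(\nu)$) plus a remainder dominated pointwise by a multiple of $u_\varphi$, and by propagating these bounds through any further conditional expectation via the drift inequality. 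Collecting the contributions of the two regimes then produces the announced estimate with the constant independent of $n$, $x$ and the sequence $(f_n)$.
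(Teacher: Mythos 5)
Your tuple-by-tuple expansion is in spirit the same sum the paper controls (the paper's recursion in $n$ unwinds to precisely $\sum_m \esp_x f_m(X_m) S_m^3 + \cdots$), so the frameworks are close; but the specific mechanism you use to get the logarithmic gain has a genuine gap, at exactly the point you flag as ``the main obstacle''.

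The problem is the case where the maximal gap $m^\ast$ is \emph{not} the last gap $k_4-k_3$. Conditioning at the left endpoint $X_{k_j}$ of that gap produces $\esp_x\bigl[f_{k_1}(X_{k_1})\cdots f_{k_j}(X_{k_j})\, P^{m^\ast}G(X_{k_j})\bigr]$, where $G(y)=\esp\bigl[f_{k_{j+1}}(X_{k_{j+1}})\cdots f_{k_4}(X_{k_4})\mid X_{k_{j+1}}=y\bigr]$. To extract the decay $(m^\ast)^{-(1+\alpha)}$ from $P^{m^\ast}$ via Corollary~\ref{corollaire:solution_poisson_tore_diophantien}, $G$ must be centered and $\gamma$-H\"older with a controlled norm. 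It is neither: its H\"older seminorm involves $\|P^{k_4-k_3}f_{k_4}\|_\gamma$, which can be as large as $\|P\|_\gamma^{m^\ast}$, and its mean $\int G\,\di\nu=\int f_{k_{j+1}}P^{k_{j+2}-k_{j+1}}(\cdots)\,\di\nu$ is $\Theta(1)$ whenever $k_{j+2}-k_{j+1}$ is small. Your mean-plus-remainder fix does not repair this: $P^{m^\ast}$ applied to the constant $\int G\,\di\nu$ is that constant, and $P^{m^\ast}$ applied to a remainder merely dominated by a multiple of $u_\varphi$ is bounded below by a constant (since $u_\varphi\geqslant 1$ and $P^{m^\ast}u_\varphi\to b/(1-a)$), so neither piece decays in $m^\ast$. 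The drift inequality propagates a bound of the form $C\cdot(\text{last gap}+1)^{-(1+\alpha)}u_\varphi$ through earlier conditionings, but this is decay in $k_4-k_3$ only, not in $m^\ast$. Consequently the announced factor $(\ln n)^{-(1+\alpha)}$ is unjustified precisely for the tuples with $k_4-k_3\leqslant\ln n$ and a large earlier gap, and there are $O(n^3\ln n)$ such tuples, whose crude contribution $O(n^3\ln n\, M^4)$ already exceeds the target $O\bigl(n^3(\ln n)^{1-\alpha}\bigr)$. (Separately, the count of tuples with $m^\ast>\ln n$ is $O(n^4)$, not $O(n^3)$; the $n^3$ count is recovered only after summing the geometric-type decay in the \emph{last} gap, which is exactly the quantity your argument does not control.)

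What is missing is the paper's second cutoff. There one splits on the size of the last gap $n-k$: if $n-k\geqslant\ln n$ one conditions at $X_k$ directly (centered H\"older, clean decay); if $n-k<\ln n$ one conditions much further back at $X_p$ with $p=n-n^\delta$, writes $f_kP^{n-k}f_n$ as its $\nu$-mean plus a centered part, and observes that the centered part \emph{is} $\gamma$-H\"older with norm at most $M^2\|P\|_\gamma^{n-k}\leqslant M^2 n^{\ln\|P\|_\gamma}$, a merely polynomial loss because $n-k\leqslant\ln n$. The long gap $k-p\gtrsim n^\delta$ then yields a decay $n^{-\delta\alpha}$ that overwhelms this loss once $\delta\alpha>\ln\|P\|_\gamma$, while the mean $\int f_kP^{n-k}f_n\,\di\nu$ is handled by the spectral gap in $\mathrm L^2_0(\X,\nu)$. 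It is this interplay between the size of the last gap, the polynomially controlled H\"older blow-up, and the choice of $\alpha_0$ large relative to $\ln\|P\|_\gamma$ that produces the $(\ln n)^{1-\alpha}$ factor, and your sketch does not yet contain an argument that substitutes for it.
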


\begin{remark}
What is hidden behind this lemma is a kind of Burckholder inequality that says that if $(Y_i)$ is a sequence of iid bounded random variables on $\R$ of null expectation, then for any $r \in \N$,
\[
\esp\left| \sum_{k=0}^{n-1} Y_k\right|^{r} \in \cal O \left( n^{r/2} \right)
\]
\end{remark}

\begin{proof}
First, we choose $\alpha\in \R_+^\ast$ and we will see a lower bound on $\alpha$ later. We note $\psi(n) = n^{-\alpha}$ and according to proposition~\ref{proposition:vitesse_convergence_tore}, there is a constant $C$ such that for any $n\in \N$ and any $x\in \X$,
\[
\cal W_\gamma(\rho^{\ast n} \ast \delta_x, \nu) \leqslant \frac{C}{n^{\alpha}} u_\varphi(x)
\]
where $\varphi(q) = e^{B q^\beta}$.
Moreover, for $n\in \N$, we note
\[
S_n = \sum_{k=0}^{n-1} f_k(X_k)
\]
We can compute
\begin{flalign*}
\esp_x |S_{n+1}|^4 &= \sum_{k=0}^{4} \binom 4 k \esp_x f_{n}(X_{n})^k S_{n}^{4-k} \\
&=\esp_x |S_{n}|^4 + 4 \esp_x f_{n}(X_{n}) S_{n}^3 + \sum_{k=2}^4 \binom 4 k \esp_x (f_{n}(X_{n}))^k S_{n}^{4-k}
\end{flalign*}
So, we note
\[
A_n := \esp_x f_{n}(X_{n}) S_{n}^3  \text{ et }B_n:=\sum_{k=2}^4 \binom 4 k \esp_x (f_{n}(X_{n}))^k S_{n}^{4-k}
\]
and so, we have, noting $p(n)$ (and even only $p$ to simplify notations) a sequence that we will determine later and such that $0 \leqslant p(n) \leqslant n$, that
\begin{flalign*}
A_n &= \sum_{k=0}^3 \binom 3 k \esp_x f_{n}(X_{n}) (S_{n} - S_{p})^k S_{p}^{3-k} \\
&=\esp_x f_{n}(X_{n}) S_{p}^3 +3 \esp_x f_n(X_n) (S_n- S_p)S_p^2 \\ & \retrait\retrait +\sum_{k=2}^3 \binom 3 k \esp_x f_{n}(X_{n}) (S_{n} - S_{p})^k S_{p}^{3-k} 
\end{flalign*}
We note each of this terms $A_n^1, A_n^2$ et $ A_n^3$.

Then, using the fact that $\int f_n \di\nu=0$, and that, according to proposition~\ref{proposition:vitesse_convergence_tore},
\[
|P^{n-p+2} f_n (X_{p-1})| \leqslant \frac C {(n-p+2)^\alpha} \|f_n\|_\gamma u(X_{p-1})
\]
and that
\[
P^l u(x) \leqslant a^l u(x) + \frac b{1-a}
\]
we get that
\begin{flalign*}
|A_n^1| &= \left|\esp_x P^{n-p+2} f_{n} (X_{p-1}) S_{p}^3 \right| \\
& \leqslant \frac C {(n-p+2)^{\alpha}} \|f_{n}\|_\gamma \esp_x u(X_{p-1}) |S_{p}|^3 \\
& \leqslant \frac C {(n-p+2)^{\alpha}} \|f_{n}\|_\gamma \|S_p\|_\infty^3 P^{p-1} u(x) \\
& \leqslant \frac C {(n-p+2)^{\alpha}} \|f_{n}\|_\gamma \|S_p\|_\infty^3 \left(a^{p-1} + \frac b{1-a}\right)u(x) \\
&= \cal O \left(u(x) \max_{k\in \lib 0,n\rib} \|f_k\|_\gamma^4 \frac{n^3}{(n-p+2)^{\alpha}} \right)
\end{flalign*}
Moreover,
\begin{flalign*}
A_n^2 &= \sum_{k=p}^{n-1} \esp_x f_n(X_n) f_k(X_k) S_p^2 \\
&= \sum_{k=p}^{q-1} \esp_x  P^{n-k} f_n(X_k) f_k(X_k) S_p^2 + \sum_{k=q}^{n-1} \esp_x P^{k-p} (f_k P^{n-k} f_n)(X_p) S_p^2
\end{flalign*} 
and so, for some sequence $q(n)$ that we will determine later and with $p(n) < q(n)<n$, we have that
\begin{flalign*}
|A_n^2| & \leqslant \sum_{k=p}^{q-1} \frac{C}{(n-k)^{\alpha}} \|f_k\|_\infty  \|f_n\|_\gamma\esp_x u(X_k) S_p^2 \\ & \retrait +\sum_{k=q}^{n-1} \frac C{(k-p)^{\alpha}} \|f_k P^{n-k} f_n\|_\gamma \esp_x u(X_p) S_p^2 + \left|\int f_k P^{n-k} f_n \di\nu\right| \esp_x S_p^2 \\
&  \leqslant n^2 \max_{k\in \lib 0,n\rib} \|f_k \|_\gamma^4 \left( \sum_{k=p}^{q-1} \frac {CP^k u (x)} {(n-k)^{\alpha}}  + \sum_{k=q}^{n-1} \frac{ C\|P\|_\gamma^{n-k} P^p u(x) }{(k-p)^{\alpha}}  +  \|P\|_{\mathrm{L}^2_0(\X,\nu)}^q \right) \\
& = \cal O \left( n^2 \max_{k\in \lib 0,n\rib} \|f_k \|_\gamma^4 u(x) \left( \sum_{k=n-q+1}^{n-p} \frac 1 {k^{\alpha}} +  \frac{\|P\|_\gamma^{n-q}}{(q-p)^\alpha} +\|P\|_{\mathrm{L}^2_0(\X,\nu)}^q \right) \right)
\end{flalign*}
So, with $q = n-\ln n$ and $p= n - n^\delta$ with $\delta <1/2$, and taking $\alpha_0$ such that $\delta \alpha_0 > \ln \|P\|_\gamma$, we find that
\begin{flalign*}
A_n^2 &= \cal O \left(n^2 \max_{k\in \lib 0,n\rib} \|f_k \|_\gamma^4 u(x)\left( \sum_{k=\ln n+1}^{+\infty} \frac 1 {k^\alpha} + \frac{n^{\ln \|P\|_\gamma}}{(n^\delta - \ln n)^\alpha} + \|P\|^{n-\ln n}_{\mathrm{L}^2_0(\X,\nu)}\right) \right) \\
& = \cal O \left( \frac{n^2}{ (\ln n)^{\alpha-1}} \max_{k\in \lib 0,n\rib} \|f_k\|_\gamma^4 u(x)\right)
\end{flalign*}
and finally,
\begin{flalign*}
|A_n^3| & \leqslant \sum_{k=2}^3 \binom 3 k \|f_n\|_\infty \|S_n - S_p\|^k_\infty \esp_x |S_{p}|^{3-k}
\end{flalign*}
But,
\[
\|S_n - S_p\|_\infty \leqslant \sum_{k=p}^{n-1} \|f_p\|_\infty \leqslant (n-p) \sup_{k\in \lib 0, n-1\rib} \|f_k\|_\infty
\]
so
\[
A_n^3 = \cal O \left(n^{1+2\delta} \max_{k\in \lib0,n\rib} \|f_k\|_\gamma^4 \right)
\]
and we recall that we choose $\delta<1/2$.
So we can take $\delta = 1/4$ and we can assume that $\delta \alpha_0 >1$ to get that
\[
|A_n^1| = \cal O \left(u(x) \max_{k\in \lib 0,n\rib} \|f_k\|_\gamma^4 n^{3-\delta \alpha} \right)
\]
thus we proved that
\[
A_n = \cal O \left( \frac{n^2}{(\ln n)^{\alpha-1}}  u(x)\max_{k\in \lib 0,n\rib} \|f_k\|^4 \right)
\]

To study $B_n$, remark in a first time that
\[
\esp_x S_n^2 = \sum_{k=0}^{n-1} P^k (f_k)^2(x) + 2 \sum_{k=0}^{n-1} \sum_{l=0}^{k-1}P^l (f_l P^{k-l} f_k)(x)
\]
The first term of this sum is dominated by $n\max_{k\in \lib 0,n\rib} \|f_k\|_\gamma$ and a computation similar to the previous one proves that the second one is bounded by constants times
\[
\frac{n^2}{(\ln n)^{\alpha-1}} 
u(x)\max_{k\in \lib 0,n\rib} \|f_k\|^2 .
\]
Therefore,
\[
\esp_x (f_n(X_n))^2 S_n^2 \in \cal O \left(\frac{n^2}{(\ln n)^{\alpha-1}} 
u(x)\max_{k\in \lib 0,n\rib} \|f_k\|^4 \right)
\]
And this proves that
\[
B_n = \cal O\left(\frac{n^2}{(\ln n)^{\alpha-1}} 
u(x)\max_{k\in \lib 0,n\rib} \|f_k\|^4 \right)
\]
So,
\[
\esp_x |S_{n+1}|^4 = \esp_x |S_n|^4 +\cal O \left(\frac{n^2}{(\ln n)^{\alpha-1}} 
u(x)\max_{k\in \lib 0,n\rib} \|f_k\|^4 \right)
\]
and iterating this relation, we get that
\[
\esp_x |S_{n+1}|^4 = \cal O \left(\frac{n^3}{(\ln n)^{\alpha-1}}u(x)\max_{k\in \lib 0,n\rib} \|f_k\|^4  \right)
\]
which is what we intended to prove.
\end{proof}

We are now ready to prove the convergence of the variance in next
\begin{lemma} \label{lemma:convergence_variance_tore_diophantien}
Let $\rho$ be a borelian probability measure on $\mathrm{SL}_d(\Z)$ whose support generates a strongly irreducible and proximal group and which has an exponential moment.

Then, for any $\gamma \in ]0,1]$ there is $\beta_0 \in \R_+^\ast$ such that for any $\beta \in ]0,\beta_0[ $ and any $B\in \R$, noting $\varphi(q) = e^{Bq^\beta}$, we have that for any $\gamma-$hölder-continuous function $f$ on the torus, noting $g$ the solution to Poisson's equation defined in $\mathcal F^3_{u_\varphi}$ and given by corollary~\ref{corollaire:solution_poisson_tore_diophantien} we have that for any $x\in \X$ such that $u_\varphi(x)$ is finite,
\[
\frac 1 n \sum_{k=0}^{n-1} P(g^2)(X_k) - (Pg(X_k))^2 \xrightarrow\, \int g^2 - (Pg)^2 \di \nu \;\; \prob_x-\text{a.e. and in }\mathrm{L}^1(\prob_x)
\]
\end{lemma}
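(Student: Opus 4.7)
The plan is to approximate the conditional variance function $h := P(g^2) - (Pg)^2$ by a $\gamma$-hölder continuous truncation to which Lemma~\ref{lemme:preliminaire_variance} applies, and then to control the error using the drift estimate for $u_\varphi$. Define $g_N := \sum_{k=0}^{N-1} P^k(f - \int f\,\di\nu)$ and $h_N := P(g_N^2) - (Pg_N)^2$. The exponential moment of $\rho$ yields $\int\|g\|^\gamma\di\rho(g)<+\infty$, so $P$ sends $\mathcal{C}^{0,\gamma}(\T^d)$ into itself, hence each $g_N$ (and thus $h_N$) is $\gamma$-hölder continuous on the torus. Corollary~\ref{corollaire:solution_poisson_tore_diophantien} gives $g_N\to g$ in $\mathcal{F}^3_{u_\varphi}$ as $N\to+\infty$.

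For fixed $N$, I would apply Lemma~\ref{lemme:preliminaire_variance} to the constant sequence $f_k \equiv h_N - \int h_N \di\nu$ to obtain
\[
\esp_x\biggl|\sum_{k=0}^{n-1}\bigl(h_N(X_k)-\textstyle\int h_N\di\nu\bigr)\biggr|^{4} = \mathcal{O}\!\left(\frac{n^{3}\, u_\varphi(x)}{(\ln n)^{\alpha-1}}\right),
\]
with $\alpha$ as large as needed after shrinking $\beta_0$. Dividing by $n^4$ yields convergence to $0$ in $\mathrm{L}^4(\prob_x)$, hence in $\mathrm{L}^1$. For the $\prob_x$-a.e.\ convergence, I would use Markov's inequality plus Borel--Cantelli along the subsequence $n_j = j^2$ (for which $\sum 1/(n_j(\ln n_j)^{\alpha-1})<+\infty$ as soon as $\alpha>1$), then interpolate on $n_j\leqslant n<n_{j+1}$ using that $h_N$ is bounded and that the gaps satisfy $(n_{j+1}-n_j)/n_j = O(1/j)$.

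To pass from $h_N$ to $h$, write $h-h_N = P\bigl((g-g_N)(g+g_N)\bigr) - P(g-g_N)\,P(g+g_N)$ and set $A_N := \|g-g_N\|_{\mathcal{F}^3_{u_\varphi}}$. Using $|g-g_N|\leqslant A_N u_\varphi^{1/3}$, $|g+g_N|\leqslant Cu_\varphi^{1/3}$, Jensen's inequality, and $Pu_\varphi\leqslant au_\varphi+b$, one gets $|h-h_N|\leqslant CA_N u_\varphi^{2/3}$. Since $u_\varphi-Pu_\varphi+b\geqslant(1-a)u_\varphi$, the function $u_\varphi^{2/3}$ lies in $\mathcal{E}^{3/2}_{u_\varphi}$ with norm $\leqslant(1-a)^{-2/3}$, so $\|h-h_N\|_{\mathcal{E}^{3/2}_{u_\varphi}}\to 0$. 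Combining Minkowski's inequality with the Hölder-type bound $\sum_{k<n} P^k(u_\varphi-Pu_\varphi+b)^{2/3}\leqslant n^{1/3}(u_\varphi(x)+nb)^{2/3}$ (in the spirit of the proof of Lemma~\ref{lemma:produit_cesaro}) yields $\limsup_n\esp_x\bigl|\tfrac{1}{n}\sum_{k<n}(h-h_N)(X_k)\bigr|\leqslant CA_N$ and the analogous a.e.\ estimate; Lemma~\ref{lemma:extmesures} also gives $\bigl|\int(h-h_N)\di\nu\bigr|\leqslant CA_N$. Letting $N\to\infty$ closes the argument.

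The technical obstacle is the joint calibration of parameters: $\beta_0$ must be chosen small enough that \emph{both} Corollary~\ref{corollaire:solution_poisson_tore_diophantien} delivers a Poisson solution $g\in\mathcal{F}^3_{u_\varphi}$ \emph{and} Lemma~\ref{lemme:preliminaire_variance} applies with an $\alpha$ large enough to close the Borel--Cantelli step, all compatibly with the fixed $\gamma$.
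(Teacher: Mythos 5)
Your proposal is correct, but it takes a genuinely different decomposition from the paper's. The paper first writes $P(g^2)-(Pg)^2 = \bigl[P(g^2)-g^2\bigr] - f^2 + 2fg$ (using $f=g-Pg$), disposes of the first two terms by Proposition~\ref{proposition:lln_martingales} and the law of large numbers respectively, and for the cross term $fg$ truncates $g$ at a level $p(k)\asymp\delta_1\ln\ln k$ \emph{inside} the sequence of functions fed to Lemma~\ref{lemme:preliminaire_variance}; the slowly growing truncation is chosen so that $\max_{k\leqslant n}\|f_k\|_\gamma^4\sim\|P\|_\gamma^{4p(n)}$ is dominated by the $(\ln n)^{\alpha-1}$ gain, and the final identity $-\int f^2\,\di\nu + 2\sum_l\int fP^lf\,\di\nu = \int g^2-(Pg)^2\,\di\nu$ recovers the claimed limit. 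You instead approximate the conditional variance $h=P(g^2)-(Pg)^2$ directly by the hölder functions $h_N$, apply Lemma~\ref{lemme:preliminaire_variance} to a \emph{constant} sequence at each fixed $N$, and then send $N\to\infty$ at the end using the $\mathcal{E}^{3/2}_{u_\varphi}$-control on $h-h_N$. This is a more modular route: it decouples the Borel--Cantelli step (done at fixed $N$, where $\|h_N\|_\gamma$ is an $N$-dependent constant) from the approximation step, and it never needs the closed-form algebra relating $f$ to $g$. What the paper's single growing truncation buys is that a single application of the fourth-moment lemma suffices and the limit appears directly, at the cost of the delicate calibration between $p(n)$, $\alpha$, and $\ln\|P\|_\gamma$. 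Both rest on the same engine (Lemma~\ref{lemme:preliminaire_variance} plus the drift estimate and the Cesàro argument of Lemma~\ref{lemma:produit_cesaro}), and the parameter bookkeeping you flag at the end — choosing $\alpha>\max(\alpha_0,1)$ first, then taking $\beta_0$ to be the minimum of what Lemma~\ref{lemme:preliminaire_variance} and Corollary~\ref{corollaire:solution_poisson_tore_diophantien} each require — works out without difficulty.
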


\begin{proof}
We take at first $\alpha_0$ equal to the one of the previous lemma and take $\alpha>\alpha_0$. It comes with it a constant $\beta_0$ such that for any $B\in \R$ and any $\beta \in ]0,\beta_0[$, the function $\psi(t)$ given by~\ref{proposition:vitesse_convergence_tore} satisfies that $\sup_n n^{\alpha} \psi(n) $ is finite.

Remark that for any $\gamma-$hölder-continuous function $f$ on the torus, the function $g$ given by proposition~\ref{proposition:vitesse_convergence_tore} is square-integrable agains Lebesgue's measure. We can see this as a consequence of~\ref{lemma:extmesures} or, more simply, use that under our assumptions, the operator $P$ has a spectral gap $\mathrm{L}^2(\X,\nu)$ as we already saw in the introduction and so, the function $g$ is $a.e.-$equal to a square-integrable function. We will actually use this spectral gap in the proof of this lemma.

We assume without any loss of generality that $\int f\di\nu=0$.
To prove the lemma, we use that $f=g-Pg$ to write
\begin{flalign*}
I_n(x) :&=\frac 1 n \sum_{k=0}^{n-1} P(g^2)(X_k) - (Pg(X_k))^2 = \frac 1 n \sum_{k=0}^{n-1} P(g^2)(X_k) - (g(X_k) - f(X_k))^2 \\
&= \frac 1 n \sum_{k=0}^{n-1} P(g^2)(X_k) - g^2(X_k) - \frac 1 n \sum_{k=0}^{n-1} (f(X_k))^2 + \frac 2 n \sum_{k=0}^{n-1} f(X_k) g(X_k)
\end{flalign*}
According to proposition~\ref{proposition:lln_martingales}, and using that $u$ is a drift function and that $g \in \mathcal F^3_u$, we get that for any $x$ such that $u(x)$ is finite,
\[
\frac 1 n \sum_{k=0}^{n-1} P(g^2)(X_k) - g^2(X_k) \xrightarrow\, 0 \text{ in }\mathrm{L}^1(\prob_x) \text{ and }\prob_x-\text{a.e.}
\]
Moreover, the law of large numbers proves that for any irrational point $x$ of the torus (and so in particular, for any $x$ such that $u(x)$ is finite),
\[
\frac 1 n \sum_{k=0}^{n-1}(f(X_k))^2 \xrightarrow\, \int_\X f^2 \di\nu \text{ in }\mathrm{L}^1(\prob_x) \text{ and }\prob_x-\text{a.e.}
\]
Moreover, if $p : \N \to \N$ is a non decreasing function converging to infinity that we will determine later, we have that
\[
g(x) = \sum_{l=0}^{p(k)-1} P^l f(x) + \sum_{l=p(k)}^{+\infty} P^l f(x)
\]
and so
\[
\frac 1 n \sum_{k=0}^{n-1} f(X_k) g(X_k) =  \frac 1 n \sum_{k=0}^{n-1} \sum_{l=0}^{p(k)-1} f(X_k) P^l f (X_k) + \frac 1 n \sum_{k=0}^{n-1} \sum_{l=p(k)} ^{+\infty} f(X_k) P^l f(X_k)
\]
But, according to lemma~\ref{lemme:preliminaire_variance} applied to the sequence of functions
\[
f_n = \sum_{k=0}^{p(n)-1} fP^l f- \int fP^l f\di \nu
\]
we have that
\[
\esp_x\left|\sum_{k=0}^{n-1} \sum_{l=0}^{p(k)-1} \left(f(X_k) P^l f(X_k) - \int f P^l f \di\nu \right) \right|^4 = \cal O \left( \frac {n^3 u(x)}{(\ln(n))^{\alpha-1}} \max_{k\in \lib 0,n\rib} \left\| f_k\right\|_\gamma^4\right)
\]
and, for any $k\in \lib0,n\rib$,
\[
\left\| f \sum_{l=0}^{p(k)-1} P^lf \right\|_\gamma \leqslant \|f\|_\gamma^2\sum_{l=0}^{p(k)-1} \|P\|_\gamma^l  \leqslant \|f\|_\gamma^2 \frac{ \|P\|_\gamma^{p(k)} }{\|P\|_\gamma  - 1}
\]
Thus,
\[
\esp_x\left|\sum_{k=0}^{n-1} \sum_{l=0}^{p(k)-1} \left(f(X_k) P^l f(X_k) - \int f P^l f \di\nu \right) \right|^4 = \cal O \left( \frac {n^3}{(\ln(n))^{\alpha_0}} u(x) \|f\|_\gamma^8 \|P\|_\gamma^{p(n)} \right)
\]
so, if $p(n) \asymp \delta_1 \ln (\ln n)$ with $\delta_1$ such that $\delta_1 \ln \|P\|_\gamma<\alpha_0$, we have that for $n$ large enough,
\[
\|P\|_\gamma^{p(n)} \leqslant e^{\delta_1 \ln ( \ln n) \|P\|_\gamma} = (\ln n)^{\delta_1\ln \|P\|_\gamma}
\]
and so,
\[
\sum_n \frac 1 {n^4} \esp_x\left|\sum_{k=0}^{n-1} \sum_{l=0}^{p(k)-1} \left(f(X_k) P^l f(X_k) - \int f P^l f \di\nu \right) \right|^4 <+\infty
\]
This proves that
\[
\frac 1 n \sum_{k=0}^{n-1} \sum_{l=0}^{p(k)-1} f(X_k) P^l f(X_k) - \int fP^l f \di\nu \xrightarrow\, 0\;\; \prob_x\text{-a.e. and in }\mathrm{L}^1(\prob_x)
\]
Moreover, using the spectral gap in $\mathrm{L}^2(\X,\nu)$ and Cesaro's lemma, we get that
\[
\frac 1 n\sum_{k=0}^{n-1} \sum_{l=0}^{p(k)-1} \int fP^l f\di\nu \xrightarrow\, \sum_{l=0}^{+\infty }\int f P^l f\di\nu
\]
We are going to prove that $\frac 1 n \sum_{k=0}^{n-1} \sum_{l=p(k)} ^{+\infty} f(X_k) P^l f(X_k)$ converges to $0$. But, using proposition~\ref{proposition:vitesse_convergence_tore}, we have that
\begin{flalign*}
\frac 1 n \left|\sum_{k=0}^{n-1} \sum_{l=p(k)} ^{+\infty} f(X_k) P^l f(X_k) \right| &\leqslant \frac 1 n \sum_{k=0}^{n-1} \sum_{l=p(k)}^{+\infty} |f(X_k)| |P^l f(X_k) |  \\
&\leqslant \frac 1 n \sum_{k=0}^{n-1} \sum_{l= p(k)}^{+\infty} \| f\|_\infty \frac{ C }{ l^{1 + \alpha
}} h_\varphi(X_k)^{\delta/3} \| f\|_\gamma \\
& \leqslant \frac 1 n \sum_{k=0}^{n-1} \frac{C'}{p(k)^{\alpha}} h_\varphi(X_k)^{\delta/3} \| f\|_\gamma^2
\end{flalign*}
for some constant $C'$.

But, by definition of $u_\varphi$, $h_\varphi^{\delta/3} \in \mathcal F_u^3$ and so, according to lemma~\ref{lemma:produit_cesaro},  we have that
\[
 \frac 1 n \sum_{k=0}^{n-1} \frac{1}{p_k^{\alpha}} h_\varphi(X_k)^{\delta/3} \xrightarrow\, 0 \;\;\prob_x-\text{a.e. and in }\mathrm{L}^1(\prob_x)
\]
What we just proved is that
\[
\frac 1 n \sum_{k=0}^{n-1} P(g^2)(X_k) - (Pg(X_k))^2 \xrightarrow\, -\int f^2\di\nu + 2 \sum_{l=0}^{+\infty} fP^l f\di\nu \;\;\prob_x\text{-a.e. and in }\mathrm{L}^1(\prob_x)
\]
To conclude, we only have to remark that
\begin{flalign*}
-\int f^2\di\nu + 2 \sum_{l=0}^{+\infty} fP^l f\di\nu &= \int - (g-Pg)^2 + 2(g-Pg)g \di \nu \\
&= \int 2g^2 -2g Pg - g^2 +2gPg - (Pg)^2\di\nu \\&= \int g^2 - (Pg)^2 \di\nu
\end{flalign*}
which finishes the proof of the lemma.
\end{proof}

\begin{theorem}\label{theorem:TCL_tore_diophantien}
Let $\rho$ be a borelian probability measure on $\mathrm{SL}_d(\Z)$ whose support generates a strongly irreducible and proximal group and which has an exponential moment.

Then, for any $\gamma \in ]0,1]$ there is $\beta_0 \in \R_+^\ast$ such that for any $B \in \R_+^\ast$ and $\beta \in ]0,\beta_0[$ we have that for any irrational point $x\in \X$ such that the inequality
\[
d\left(x,\frac p q \right) \leqslant e^{-Bq^\beta}
\]
has a finite number of solutions $p/q \in \Q^d/\Z^d $, we have that for any $\gamma-$holder continuous function $f$ on the torus, noting $\sigma^2(f) $ the variance given by equation~\ref{equation:variance} we have that
\[
\frac 1 {\sqrt n} \sum_{k=0}^{n-1} f(X_k) \xrightarrow{\mathcal{L}} \cal N\left(\int f\di \nu, \sigma^2(f) \right)
\]
(If $\sigma^2=0$, the law $\cal N(\mu,\sigma^2)$ is a Dirac mass at $\mu$).

Moreover, if $\sigma^2(f) \not=0$ then, $\prob_x-$a.e.,
\[
\liminf \frac{ \sum_{k=0}^{n-1} f(X_k) - \int f \di \nu}{\sqrt{2n\sigma^2(f) \ln\ln n}} =-1 \text{ and }\limsup \frac{ \sum_{k=0}^{n-1} f(X_k) - \int f \di \nu}{\sqrt{2n\sigma^2(f) \ln\ln n}} =1 
\]
and if $\sigma^2(f)=0$, then for $\nu-$a.e. $x\in \X$, the sequence $ (\sum_{k=0}^{n-1} f(X_k) - \int f \di \nu)_n$ is bounded in $\mathrm{L}^2(\prob_x)$.
\end{theorem}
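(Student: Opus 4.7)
The plan is to apply Gordin's martingale method, synthesising the machinery built in the previous two subsections. I would first fix $\gamma\in]0,1]$ and let $\beta_0$ be the minimum of the thresholds produced by Corollary~\ref{corollaire:solution_poisson_tore_diophantien} (say with $M=1$) and by Lemma~\ref{lemma:convergence_variance_tore_diophantien}. Given $B>0$ and $\beta\in]0,\beta_0[$, set $\varphi(q)=e^{Bq^\beta}$. The diophantine hypothesis on $x$ translates into $h_\varphi(x)<+\infty$, and since $\varphi$ outgrows every polynomial, the remarks following Lemma~\ref{lemma:definition_u_varphi} yield $u_\varphi(x)<+\infty$. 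Corollary~\ref{corollaire:solution_poisson_tore_diophantien} then produces $g\in\mathcal F^3_{u_\varphi}$ solving $f-\int f\di\nu=g-Pg$ on $\{u_\varphi<+\infty\}$, with $\|g\|_{\mathcal F^3_{u_\varphi}}\leqslant C\|f\|_\gamma$.

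Next I would telescope. The inequality $Pu_\varphi\leqslant au_\varphi+b$ with $a<1$ gives $\esp_x u_\varphi(X_n)\leqslant a^n u_\varphi(x)+b/(1-a)$, keeping the walk almost surely inside $\{u_\varphi<+\infty\}$, so that $\prob_x$-almost surely
\[
\sum_{k=0}^{n-1}f(X_k)-n\!\int\!f\di\nu\;=\;g(X_0)-g(X_n)+M_n,\qquad M_n:=\sum_{k=0}^{n-1}\bigl(g(X_{k+1})-Pg(X_k)\bigr).
\]
The boundary terms are negligible: $g(X_0)$ is a deterministic constant, and the inclusion $\mathcal F^3_{u_\varphi}\subset\mathcal E^3_{u_\varphi}$ (which follows from $u_\varphi\leqslant(1-a)^{-1}(u_\varphi-Pu_\varphi+b)$) lets me apply Lemma~\ref{lemma:u(X_n)/n} with $p=3$, $\varepsilon=1$ to obtain $g(X_n)/\sqrt n\to 0$ $\prob_x$-a.s., which is stronger than needed for both the CLT scale and the LIL scale $\sqrt{n\ln\ln n}$.

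To handle $M_n$ I would then invoke Corollary~\ref{corollary:TCL_martingales} with $p=3>2$. Its crucial hypothesis
\[
\frac 1 n\sum_{k=0}^{n-1}\bigl(P(g^2)(X_k)-(Pg(X_k))^2\bigr)\xrightarrow\,\int g^2-(Pg)^2\di\nu=\sigma^2(f)
\]
both $\prob_x$-a.s.\ and in $\mathrm L^1(\prob_x)$ is exactly Lemma~\ref{lemma:convergence_variance_tore_diophantien}. The corollary then yields the CLT for every $\sigma^2(f)\geqslant 0$ (with a Dirac limit in the degenerate case) and the LIL whenever $\sigma^2(f)\neq 0$; the transfer to $\sum_{k=0}^{n-1}f(X_k)-n\!\int\!f\di\nu$ is immediate by the boundary control of the previous paragraph.

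It remains to establish the $\mathrm L^2$-boundedness statement when $\sigma^2(f)=0$. By Lemma~\ref{lemma:extmesures} we have $g\in\mathcal E^3_{u_\varphi}\subset\mathrm L^3(\X,\nu)\subset\mathrm L^2(\X,\nu)$, so the identity $\sigma^2(f)=\|g\|_2^2-\|Pg\|_2^2=0$ triggers Proposition~\ref{proposition:nullite_variance}, providing a subgroup $\mathbf H$ of $\mathrm{SL}_d(\Z)$ and $\eta\in\mathrm{SL}_d(\Z)$ with $g$ being $\mathbf H$-invariant and $\supp\rho\subset\mathbf H\eta$. To apply Corollary~\ref{corollary:nullite_variance} I must check $\sup_n P^n g^2(x)<+\infty$ $\nu$-a.e.; this follows from the bound $g^2\leqslant Cu_\varphi^{2/3}$ together with the concavity estimate $Pu_\varphi^{2/3}\leqslant(Pu_\varphi)^{2/3}\leqslant a^{2/3}u_\varphi^{2/3}+b^{2/3}$, whose iteration shows $P^n u_\varphi^{2/3}$ is bounded whenever $u_\varphi$ is finite, i.e.\ $\nu$-a.e. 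The corollary then yields the announced $\mathrm L^2(\prob_x)$-boundedness for $\nu$-a.e.\ $x$. The genuine obstacles were already surmounted in the preceding sections (the BFLM-type speed of convergence translated to hölder functions, the construction of the drift function $u_\varphi$ and the delicate fourth-moment estimate behind the variance convergence); the only subtle point here is the systematic exploitation of the strong contraction $Pu_\varphi\leqslant au_\varphi+b$ with $a<1$, via the inclusions $\mathcal F^p_{u_\varphi}\subset\mathcal E^p_{u_\varphi}$, to funnel all pieces into the martingale framework.
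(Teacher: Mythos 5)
Your proof is correct and follows essentially the same route as the paper's: telescoping via the Poisson solution $g\in\mathcal F^3_{u_\varphi}$ from Corollary~\ref{corollaire:solution_poisson_tore_diophantien}, controlling the boundary term with Lemma~\ref{lemma:u(X_n)/n} through the inclusion $\mathcal F^3_{u_\varphi}\subset\mathcal E^3_{u_\varphi}$, applying Corollary~\ref{corollary:TCL_martingales} with the variance convergence supplied by Lemma~\ref{lemma:convergence_variance_tore_diophantien}, and invoking Corollary~\ref{corollary:nullite_variance} for the degenerate case via the iterated contraction $P^n u_\varphi\leqslant a^n u_\varphi + b/(1-a)$. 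The only spot where you (and the paper itself) are a little loose is the passage from the diophantine hypothesis (finiteness of $h_\varphi(x)$) to $u_\varphi(x)<+\infty$, which formally requires shifting to a slightly larger exponent $\beta_1\in]\beta,\beta_0[$ so that $e^{Bq^\beta}\in\mathcal O\bigl(e^{Bq^{\beta_1}}q^{-(d+2)/\delta}\bigr)$; but this is the same implicit step the paper takes and it is immediate, so it is not a genuine gap.
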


\begin{proof}
This is a direct corollary of lemma~\ref{lemma:convergence_variance_tore_diophantien}, lemma~\ref{lemma:u(X_n)/n} and proposition~\ref{corollary:TCL_martingales}.

The condition on $\sigma^2(f)$ comes from corollary~\ref{corollary:nullite_variance} if we note that since $Pu_\varphi \leqslant au_\varphi+b$, we have, for any $n\in \N$,
\[
P^n u_\varphi \leqslant a^n u_\varphi + \frac{b}{1-a}
\]
and so, for any $x$ satisfying the diophantine condition, $\sup_n P^n (g^2)(x) \leqslant u_\varphi(x) + \frac b {1-a}$ is finite. And moreover, $\nu(x| u_\varphi(x)<+\infty)=1$.
\end{proof}

\bibliographystyle{amsalpha}
\bibliography{biblio}

\end{document}